\documentclass[12pt]{amsart}

\usepackage[ textheight=615pt,
  textwidth=360pt,
  centering
]{geometry}

\usepackage{xcolor}
\usepackage[english]{babel}
\usepackage[T1]{fontenc}
\usepackage[utf8]{inputenc}

\usepackage{amsmath,amssymb,amsthm,mathrsfs}
\usepackage{tikz-cd}
\usepackage{graphicx}
\usepackage{xypic}
\usepackage{hyperref}

\hypersetup{
  colorlinks=true,
  linkcolor=blue,
  urlcolor=red,
  pdftitle={}
}

\usepackage{dsfont}

\usepackage{graphicx}
\usepackage{xypic}
\usepackage{enumitem}
\usepackage{datetime}
\usepackage{xcolor}

\usepackage{mathtools}

\usepackage{hyperref} 
\hypersetup{
    colorlinks=true,
    linkcolor=blue,
    urlcolor=red,
    pdftitle={},
    }

\newtheorem{theorem}{Theorem}[section]
\newtheorem*{theorem*}{Theorem}
\newtheorem{proposition}[theorem]{Proposition}

\newtheorem{lemma}[theorem]{Lemma}
\newtheorem{conjecture}[theorem]{Conjecture}

\newtheorem{corollary}[theorem]{Corollary}
\newtheorem{definition}[theorem]{Definition}

\newtheorem{remark}[theorem]{Remark}

\newcommand{\C}{\mathbb{C}}

\newcommand{\Z}{\mathbb{Z}}

\newcommand{\R}{\mathbb{R}}

\newcommand{\X}{\mathcal{X}}
\newcommand{\T}{\mathbb{T}}

\newcommand{\Q}{\mathbb{Q}}

\newcommand{\LL}{\mathcal{L}}

\begin{document}
\title{Arithmetic theta invariants and arithmetic equilibrium measures
}

\author{Mounir Hajli}
\thanks{ }

\subjclass[2020]{14G40}

\keywords{arithmetic varieties; Hermitian line bundles; theta invariants; arithmetic equilibrium measures; arithmetic volume; equidistribution}

\address{Westlake University}

\email{hajlimounir@gmail.com}

            \date{
            }
            
            \maketitle 
            
\begin{abstract}

We introduce two new arithmetic invariants associated with Hermitian line bundles on arithmetic varieties, which play the role of the Bergman distortion function in the arithmetic setting. As a first step, we show that these two functions are asymptotically equivalent. This leads to the introduction of an intrinsic measure $
\widehat{\mu}^{\sup}_{\mathrm{eq}},$
which we show to be a natural arithmetic analogue of the equilibrium measure.

We prove that $\widehat{\mu}^{\sup}_{\mathrm{eq}}$ serves as a detector of arithmetic positivity. In particular, we determine it for weakly nef Hermitian line bundles and, in full generality, in the toric case.

Moreover, $\widehat{\mu}^{\sup}_{\mathrm{eq}}$ governs the arithmetic volume function and its variational properties. More precisely, we establish weak integral representations of the arithmetic volume and of its first variation in terms of $\widehat{\mu}^{\sup}_{\mathrm{eq}}$. These formulas provide a measure-theoretic interpretation of the volume derivative of Yuan--Zhang and reveal a structural parallel with variational formulas in complex pluripotential theory.

Our approach differs fundamentally from previous treatments based on Harder--Narasimhan filtrations or arithmetic Okounkov bodies. It places the arithmetic volume at the center of the theory and suggests that asymptotic volume invariants, rather than heights, provide the natural analytic framework for problems in equidistribution and arithmetic dynamics.

\end{abstract}

\section{Introduction}

The Bergman kernel plays a central role in complex geometry and pluripotential
theory, encoding the asymptotic behavior of spaces of holomorphic sections of
high tensor powers of a line bundle, \cite{Bouche, Catlin, Tian, Zelditch}.  From a variational viewpoint, Berman \cite{Berman2009} proved that, for
$\phi \in \mathscr{C}^2$ a weight on a holomorphic line bundle $L$ on a complex manifold $X$,  the rescaled Bergman measure converges weakly:
\begin{equation}\label{B1}
\lim_{k \to \infty} \frac{1}{k^n} \rho(\mu, k\phi)\, \mu = \mu_\phi,
\end{equation}
This convergence provides an intrinsic integral representation for the volume of $L$, expressed as 
\begin{equation}\label{B2}
\mathrm{vol}(L) = n! \int_X \mu_\phi, \qquad \mathrm{vol}(L) := \limsup_{k \to \infty} \frac{\dim_\mathbb{C} H^0(X, kL)}{k^n/n!}.
\end{equation}
This framework identifies the equilibrium measure as the natural density governing both asymptotic dimension growth and the variational properties of the volume functional. Subsequent developments, notably by Hisamoto and Finski
\cite{finski2024wess_v2, Hisamoto1, Hisamoto2}, have extended this perspective to
restricted and weighted Bergman kernels.

\medskip

\medskip

In Arakelov geometry, several notions of arithmetic positivity have been introduced and developed, leading to deep applications in Diophantine geometry; see for instance \cite{Moriwaki2, YuanInventiones, Zha95}. A central problem is to obtain effective and intrinsic criteria for arithmetic positivity and to understand the asymptotic behavior of arithmetic Hilbert functions and arithmetic volumes.

In the toric setting, substantial progress has been achieved. In a series of works, Burgos, Moriwaki, Philippon, and Sombra  gave 
 characterizations of arithmetic ampleness, nefness, and bigness together with   explicit formulae for the arithmetic volume and the $\chi$-arithmetic volume of toric Hermitian line bundles  ; see \cite{Burgos3, Moriwaki}. These results reveal the strength of convex-geometric and analytic techniques in  arithmetic toric geometry.

\medskip

However, in contrast with the complex setting, the arithmetic framework does not admit a direct analogue of the Bergman kernel asymptotics that underlie Berman’s variational approach. The asymptotic study of arithmetic Hilbert functions and arithmetic volumes has therefore relied primarily on combinatorial and slope-theoretic constructions, such as arithmetic Okounkov bodies and Harder--Narasimhan filtrations of direct image sheaves \cite{Chen1, Chen-2011, YuanInventiones, Yuan_2009}. While powerful and far-reaching, these methods depend on auxiliary choices, flags, filtrations, or slope decompositions, and do not arise from an intrinsic analytic variational structure.

\medskip

The primary purpose of this paper is to develop a fully intrinsic analytic approach to arithmetic volume and positivity, inspired by the variational methods of complex geometry but adapted to the arithmetic setting. 

More precisely, let $\X$ be a smooth projective arithmetic variety over $\mathbb{Z}$ of dimension $n+1$ and let $\overline{\LL}_\phi$ be a continuous Hermitian line bundle on $\X$, where $\phi$ is a continuous weight on $\LL$ defining the metric. Fixing a smooth volume form on $\X(\mathbb{C})$ induces a Euclidean structure on the lattice of global sections of $\LL$. 
 To any sublattice $E$ of this lattice, we associate two functions on 
$\mathcal{X}(\mathbb{C})$, denoted by
\[
\Theta((\mathcal{X},\mu),\overline E)
\quad \text{and} \quad
\rho(\mu,\overline E).
\]
The first is called the \emph{arithmetic distortion function} attached to $\overline E$. 
It can be interpreted as the expectation of the pointwise norm with respect to the Gaussian probability measure induced by the $L^2$-norm on $E$. 
The second function, $\rho(\mu,\overline E)$, measures the distortion between the $L^2$-norm and the supremum norm (see Definition~\ref{defTheta}). 
When $E$ is the full lattice, $\rho(\mu,\overline E)$ coincides with the geometric distortion function $\rho(\mu,\phi)$, and we denote the corresponding arithmetic distortion function simply by $\Theta(\mu,\phi)$.

 At first glance, one might expect $\rho(\mu,\overline E)$ to be a more natural candidate than $\Theta((\mathcal{X},\mu),\overline E)$. 
However, we shall show that the latter is better suited for asymptotic analysis and exhibits greater flexibility. 
Nevertheless, we will explain (see \eqref{eq:13_3}) that, asymptotically, the two functions do not differ significantly.

.

\medskip

We prove that these invariants satisfy the fundamental inequality
\begin{equation}\label{eq:theta-leq-rho}
\Theta(\mu,\overline E)\leq \rho(\mu,\overline E),
\end{equation}
which generalizes  \cite[Theorem~4.7]{HajliTAMS}.

By analogy with the geometric distortion function, it is natural to ask whether $\Theta(\mu,k\phi)$ admits a full asymptotic expansion as $k\to\infty$. A positive answer would imply the existence of an asymptotic expansion for certain arithmetic Hilbert functions and would provide a new variational description of arithmetic volume.
This leads naturally to two asymptotic arithmetic measures attached to a Hermitian line bundle $\overline{\LL}_\phi$ on an arithmetic variety $\X$:
\[
\widehat{\mu}^{\sup}_{\mathrm{eq}}((\X,\mu),\overline{E}_\bullet)
:=
\limsup_{k \to \infty}
\frac{1}{k^n}\Theta(\mu,\overline E_k)\,\mu,\]
\[
\quad
\widehat{\omega}^{\sup}_{\mathrm{eq}}((\X,\mu),\overline{E}_\bullet)
:=
\limsup_{k \to \infty}
\frac{1}{k^n}\rho(\mu,\overline E_k)\,\mu,
\]
where $\overline E_k$ are sublattices of $H^0(\X,k\LL^{})$. 
When $E_k$ is the full lattice of global sections, we write
\[
\widehat{\mu}^{\sup}_{\mathrm{eq}}((\X,\mu),\overline{\LL}_\phi)
=
\limsup_{k \to \infty}
\frac{1}{k^n}\Theta(\mu,k\phi)\,\mu,
\]
and similarly for $\widehat{\omega}^{\sup}_{\mathrm{eq}}$. We call  $\widehat{\mu}^{\sup}_{\mathrm{eq}}((\X,\mu),\overline{\LL}_\phi)$ the \emph{arithmetic equilibrium invariant}.  We define 
$\widehat{\mu}^{\inf}_{\mathrm{eq}}(\cdot)$ and 
$\widehat{\omega}^{\inf}_{\mathrm{eq}}(\cdot)$
by substituting $\liminf$ for $\limsup$ in the preceding definitions.

\medskip

The analysis of how sharp inequality~\eqref{eq:theta-leq-rho} is at the level of asymptotics led to the introduction of a natural arithmetic graded object 
$\overline{S}_{\phi,\bullet}$, consisting of the submodules generated by the strictly small sections of $\overline{\mathcal{L}}_\phi$. 
For each $k$, we denote by $\overline{S}_{\phi,k}$ the $\mathbb{Z}$-module $S_{\phi,k}$ endowed with the norm induced from 
$\overline{H^0(\mathcal{X},k\mathcal{L})}_{(L^2,k\phi)}$. This construction leads to a refined asymptotic inequality, clarifying the structural role of the arithmetic equilibrium invariant:
\begin{equation}\label{eq:13_3}
\widehat{\omega}^{\sup}_{\mathrm{eq}}\big((\mathcal{X},\mu), \overline{S}_{\phi,\bullet}\big)
\;\leq\;
\widehat{\mu}^{\sup}_{\mathrm{eq}}\big((\mathcal{X},\mu), \overline{\mathcal{L}}_\phi\big)
\;\leq\;
\widehat{\omega}^{\sup}_{\mathrm{eq}}\big((\mathcal{X},\mu), \overline{S}_{\phi+t,\bullet}\big),
\quad \forall\, t>0.
\end{equation}

This shows that the invariant 
$\widehat{\mu}^{\sup}_{\mathrm{eq}}\big((\mathcal{X},\mu), \overline{\mathcal{L}}_\phi\big)$
is intrinsically arithmetic in nature, see Theorem~\ref{thm:enveloppe}. In toric setting, we  prove a refinement of \eqref{eq:13_3}. Namely, we  show
\[
\widehat{\omega}^{\sup}_{\mathrm{eq}}\left((\mathcal X,\mu),\overline{S}_{\phi,\bullet}\right)
=
\widehat{\mu}^{\sup}_{\mathrm{eq}}\left((\mathcal X,\mu),\overline{\mathcal L}_\phi\right),
\]
see Theorem \ref{thm:13_2}.\\

\medskip

Next, we explain the interplay between the arithmetic equilibrium measure and arithmetic positivity (Definitions \ref{def:arith-pos1} and \ref{def:weak-nef}).
 We have  the fundamental inequality
\[
\widehat{\mu}^{\sup}_{\mathrm{eq}}((\X,\mu),\overline{\LL}_\phi)
\le
\widehat{\omega}^{\sup}_{\mathrm{eq}}((\X,\mu),\overline{\LL}_\phi),
\] 
which follows easily from \eqref{eq:theta-leq-rho}. 
If $\phi \in \mathscr{C}^2$, then $
\widehat{\omega}^{\sup}_{\mathrm{eq}}((\X,\mu),\overline{\LL}_\phi)
=
\mu_\phi,$
the equilibrium measure associated with $\phi$. Consequently,
\begin{equation}\label{eq1}
\widehat{\mu}^{\sup}_{\mathrm{eq}}((\X,\mu),\overline{\LL}_\phi)
\le
\mu_\phi.
\end{equation}

Our first main result shows that under natural positivity assumptions this inequality becomes an equality.

\begin{theorem}[cf.~Theorem~\ref{main}]
\label{thm:23_1}
If $\overline{\LL}_\phi$ is weakly nef and $\LL$ is ample, then
\begin{equation}\label{case1}
\lim_{k \to \infty}
\frac{1}{k^n}\Theta(\mu,k\phi)\,\mu
=
\mu_\phi.
\end{equation}
In particular, $\widehat{\mu}^{\sup}_{\mathrm{eq}}((\X,\mu),\overline{\LL}_\phi)=\mu_\phi$.
\end{theorem}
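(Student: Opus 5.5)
The plan is to squeeze the measures $\frac{1}{k^n}\Theta(\mu,k\phi)\,\mu$ between an upper bound coming from the geometric Bergman asymptotics and a lower bound on their total mass. Since all the measures involved are positive, it then suffices to show that every weak limit point $\nu$ satisfies $\nu\le\mu_\phi$ and $\nu(\X(\C))\ge \mu_\phi(\X(\C))$; these two facts force $\nu=\mu_\phi$. Hence the full sequence converges, and in particular the $\limsup$ defining $\widehat{\mu}^{\sup}_{\mathrm{eq}}$ equals $\mu_\phi$.

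\emph{Upper bound.} By \eqref{eq:theta-leq-rho} applied to the full lattice, $\Theta(\mu,k\phi)\le\rho(\mu,k\phi)$ pointwise. By the discussion preceding \eqref{eq1}, $\frac1{k^n}\rho(\mu,k\phi)\mu\to\mu_\phi$ (Berman's theorem \eqref{B1}) whenever $\phi\in\mathscr C^2$. If $\phi$ is only continuous, I would approximate it uniformly by smooth weights $\phi_j$. A uniform change $|\phi-\phi_j|\le\varepsilon_j$ multiplies norms by at most $e^{\pm k\varepsilon_j}$, which I will control by comparing $\Theta$ for $\phi$ and for $\phi_j\pm\varepsilon_j$. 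Here I will use that $\mu_{\phi_j}\to\mu_\phi$, since Monge--Amp\`ere of envelopes is continuous under uniform convergence. This gives $\nu\le\mu_\phi$ for every limit point $\nu$.

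\emph{Mass lower bound.} I will use the interpretation of $\int\Theta(\mu,\overline E)\,d\mu$ as a Gaussian expectation: it equals the expectation of $\|s\|_{L^2}^2$ over $s\in E$ with respect to the discrete Gaussian weight $e^{-\pi\|s\|^2}$, suitably normalized. Equivalently, it is a logarithmic derivative of the lattice theta function $\sum_{s\in E}e^{-\pi t\|s\|^2}$ at $t=1$. By Poisson summation, this expectation is at least $\frac{\mathrm{rk}E}{2\pi}\,(1-o(1))$ (in the paper's normalization, $\ge \mathrm{rk}E\cdot(1-o(1))$ times the constant making the identity exact) as soon as the dual lattice has all nonzero vectors of length $\gg1$. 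Concretely, it suffices that $E$ is spanned by vectors of norm $\le e^{-\delta k}$ for some fixed $\delta>0$.

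\emph{Realizing that condition.} Weak nefness together with ampleness of $\LL$ gives that $\overline{\LL}_{\phi+t}$ is arithmetically ample for every $t>0$, with the sign convention that $\phi+t$ shrinks norms by $e^{-t}$. By Zhang/Moriwaki, $H^0(\X,k\LL)$ then has a $\Z$-basis of sections with $\|s\|_{k(\phi+t)}<1$, i.e.\ with $\|s\|_{k\phi}\le e^{-kt/2}$ for large $k$, after passing from sup norms to $L^2$ norms, which costs only subexponential factors. Thus the mass of $\frac1{k^n}\Theta(\mu,k\phi)\mu$ is at least $\frac{\dim H^0(X,kL)}{k^n}(1-o(1))$, which tends to $\mathrm{vol}(L)/n!=\int\mu_\phi$ by \eqref{B2}. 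One point to be careful about: this mass statement must hold for $\phi$ itself, not only for $\phi+t$. I will obtain it by monotonicity and continuity in $t$, since the dual-lattice gap for the $\phi$-norm is at least that for the $(\phi+t)$-norm when the $\phi+t$ construction is used to produce the basis.

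\emph{Main obstacle.} I expect the hardest step to be making the Poisson-summation estimate uniform with exponentially small error in the rank $\sim k^n$. The error term is a sum over dual lattice points and must be $o(k^n)$ after taking the logarithmic derivative. My first attempt is to bound the number of dual vectors of length $\le R$ via Gaussian-heuristic and Minkowski bounds, using that all dual successive minima are $\ge e^{\delta k}$. If this direct count turns out to be too crude, the fallback is to compare with the orthogonal lattice $e^{-\delta k}\Z^{N}$ through a basis reduction.
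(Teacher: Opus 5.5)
Your architecture is the paper's. There, differentiating the Poisson formula for $M_k+sP_k(x)$ at $s=0$ gives $\Theta(\mu,k\phi)=\rho(\mu,k\phi)-R_k$ with $R_k\ge 0$. Moreover $\int R_k\,\mu$ equals, up to a constant factor, the Gaussian second moment of the dual lattice $\overline{H^0(\X,k\LL)}^{\vee}_{L^2,k\phi}$. That is exactly your pair ``$\Theta\le\rho$'' and ``mass defect $=$ dual second moment''. Your limit-point squeeze amounts to $\frac{1}{k^n}(\rho-\Theta)\,\mu\to 0$ in total variation, followed by Berman's theorem.

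The genuine gap is the input that makes the dual term small: that $H^0(\X,k\LL)$ is spanned by sections of norm $\le e^{-\delta k}$, which gives $\lambda_1(\overline{H^0(\X,k\LL)}^{\vee}_{L^2,k\phi})\ge e^{\delta k}$. Your derivation does not produce it. Ampleness of $\overline{\LL}_{\phi+t}$ gives generators that are short for the $k(\phi+t)$-norm, i.e.\ with $\|s\|_{\sup,k\phi}<e^{kt}$, not $\le e^{-kt/2}$. The transfer back to $\phi$ also goes the wrong way: the $k\phi$-dual norms are $e^{-kt}$ times the $k(\phi+t)$-dual norms, so the gap shrinks rather than being preserved. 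Letting $t\to 0$, this route only yields $\lambda_{N_k}(\overline{H^0(\X,k\LL)}_{\sup,k\phi})\le e^{o(k)}$.

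In fact the exponential bound is false for weakly nef bundles. Take $\overline{\mathcal O(1)}$ with the Fubini--Study metric on $\mathbb P^n_{\Z}$; it is nef in Zhang's sense, hence weakly nef by Proposition~\ref{wnef}. The coefficient of $x_0^k$ is a nonzero dual vector $f$ with $|f(s)|=\|s(p)\|_{k\phi}$ at $p=[1:0:\cdots:0]$. Hence $\|f\|^{\vee}\le \rho(\mu,k\phi)(p)^{1/2}=O(k^{n/2})$, and every section with $f(s)\neq 0$ has sup-norm $\ge 1$.

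The paper's proof of Theorem~\ref{main} rests on the same exponential bound, and its justification fails here too. Ampleness of $k\overline{\LL}_\phi+\overline{\LL}_\psi$ gives generation by small sections of $H^0(\X,m(k+1)\LL)$ only for $m\gg 0$ depending on $k$, not for $m=1$. So following the paper does not close the gap.

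As it stands, your argument proves the statement only when $\overline{\LL}_{\phi-\delta}$ is ample for some $\delta>0$ (e.g.\ $\overline{\LL}_\phi$ ample). Then $\lambda_{N_k}(\overline{H^0(\X,k\LL)}_{L^2,k\phi})\le \lambda_{N_k}(\overline{H^0(\X,k\LL)}_{\sup,k\phi})<e^{-\delta k}$ for $k\gg 0$. For a genuinely weakly nef $\overline{\LL}_\phi$ you need a mass bound that tolerates a vanishing proportion of minima of size $e^{o(k)}$. One possible route, when $\widehat{\mu}_{\max}(\overline{\LL}_\phi)>0$:
\begin{enumerate}
\item Use the limit distribution of the normalized successive minima (Boucksom--Chen) together with nefness. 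This shows that for each $\varepsilon>0$ some sublattice of rank $\ge (1-\varepsilon)N_k$ is spanned by sections of norm $\le e^{-\delta_\varepsilon k}$. This is the non-toric analogue of the boundary-layer count in Theorem~\ref{thm:13_2}.
\item Apply Poisson summation over the cosets of its saturation. Such a sublattice alone forces $\int\Theta(\mu,k\phi)\,\mu\ge (1-\varepsilon)N_k-o(1)$.
\end{enumerate}
In the Fubini--Study example with $\mu$ the normalized Fubini--Study volume form, Proposition~\ref{1331} gives the conclusion directly. Indeed $\|x^m\|^2_{L^2,k\phi}\le 1/N_k$ for every monomial and $-2t\theta'(t)/\theta(t)\to 1$ as $t\to 0^+$. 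So in that example it is the argument, not the statement, that breaks.

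Two smaller points. First, the obstacle you single out is not one. Once $r:=\lambda_1(\overline{E}^{\vee})\ge e^{\delta k}$, every nonzero dual vector has norm $\ge r$, and \eqref{Bineq2} applied to $\overline{E}^{\vee}$ with $\tilde r=r\sqrt{2\pi/(N_k+2)}\to\infty$ makes the dual term super-exponentially small. Second, the extension to continuous $\phi$ is unnecessary, since Theorem~\ref{main} assumes $\phi$ smooth. It would also not work as sketched: $|\phi-\phi_j|\le\varepsilon_j$ rescales the lattice norms by factors $e^{\pm k\varepsilon_j}$, exponential in $k$, which cannot be absorbed at scale $k^n$.
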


The connection between arithmetic bigness and the arithmetic equilibrium measure is illustrated in Theorem~\ref{thm:big}, where we show that arithmetic bigness can be characterized by a uniform positive lower bound on 
$\widehat{\mu}^{\inf}_{\mathrm{eq}}((\mathcal{X},\mu),\overline{\mathcal{L}}_\phi)$.

On the other hand, if $\mathcal{L}_{\mathbb{Q}}$ is big while the asymptotic maximal slope 
$\widehat{\mu}_{\max}(\overline{\mathcal{L}}_\phi)$ is negative, then the equilibrium measure $\mu_\phi$ is non-zero, yet
\[
\widehat{\mu}^{\sup}_{\mathrm{eq}}\big((\mathcal{X},\mu),\overline{\mathcal{L}}_\phi\big)
=
0
\quad \text{on } \mathcal{X}(\mathbb{C}),
\]
see Proposition~\ref{vanishingTheta}. 
In this situation, the inequality~\eqref{eq1} is therefore strict. 
This demonstrates that $\widehat{\mu}^{\sup}_{\mathrm{eq}}$ detects arithmetic positivity in a sharp manner.\\

\medskip

In the toric setting, the same conclusions hold under milder assumptions. 
For instance, Theorem~\ref{thm:23_1} remains valid for nef toric Hermitian line bundles on a smooth toric variety over $\mathbb{Z}$; see Theorem~\ref{toricThetaNef}. 

The geometric meaning of $\widehat{\mu}^{\sup}_{\mathrm{eq}}$ becomes particularly transparent in the toric case. 
Let $\overline{\mathcal{L}}_\phi$ be a smooth toric Hermitian line bundle on a smooth toric variety $\mathcal{X}$ with $c_1(\overline{\mathcal{L}}_\phi)$ positive, and set
\[
\mu = c_1(\overline{\mathcal{L}}_\phi)^n.
\]
We then establish the following result.

\begin{theorem}[cf.~Theorem~\ref{thm:theta-toric-1}]\label{thm:73_1}
There exists a measurable subset $U \subset \mathcal{X}(\mathbb{C})$ such that
\[
\lim_{k \to \infty}
\frac{1}{k^n}\Theta(\mu,k\phi)\,\mu
=
\mathds{1}_U\,\mu
\quad \text{weakly}.
\]
\end{theorem}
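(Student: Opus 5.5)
The plan is to use the toric structure to diagonalize everything. Sections of $k\mathcal L$ on a smooth toric variety over $\mathbb Z$ have a $\mathbb Z$-basis of monomials $\chi^m$, $m\in kP\cap M$, where $P$ is the polytope of $\mathcal L$. For a toric (i.e.\ $(S^1)^n$-invariant) weight $\phi$ and the invariant measure $\mu=c_1(\overline{\mathcal L}_\phi)^n$, these monomials are $L^2$-orthogonal. Hence the lattice $\overline{H^0(\mathcal X,k\mathcal L)}_{(L^2,k\phi)}$ is an orthogonal direct sum of rank-one lattices $\mathbb Z\chi^m$ with norms $\|\chi^m\|_{L^2,k\phi}$.

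\textbf{Step 1: a diagonal formula for $\Theta$.} By Definition~\ref{defTheta}, $\Theta(\mu,\overline E)$ is the Gaussian/theta-type average of the pointwise norms, weighted by the lattice theta function. For an orthogonal sum of rank-one lattices the theta function factorizes. I expect $\Theta(\mu,k\phi)(x)$ to reduce, up to errors that are $o(k^n)$ after integration, to
\[
\sum_{m\in kP\cap M} f\bigl(\|\chi^m\|_{L^2,k\phi}\bigr)\,\frac{|\chi^m(x)|^2_{k\phi}}{\|\chi^m\|^2_{L^2,k\phi}},
\]
where $f(r)$ is the one-dimensional factor coming from $\mathbb Z$ with norm $r$. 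This factor is close to $1$ when $r\ll1$ and exponentially small when $r\gg1$.

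\textbf{Step 2: asymptotics of the monomial norms.} By Laplace's method, using the Legendre transform $\psi^\vee$ of the convex function $\psi$ associated with $\phi$, we get $\frac1k\log\|\chi^{km'}\|_{L^2,k\phi}\to -\psi^\vee(m')=:\vartheta(m')$, uniformly on compacts of $\mathrm{int}\,P$. The roof function thus determines the limit. Monomials with $\vartheta(m/k)>0$ have norm tending to $0$ exponentially, so their $f$-factor tends to $1$. Those with $\vartheta(m/k)<0$ have exponentially large norm, so their factor tends to $0$. The set $\{\vartheta=0\}$ should be negligible, as in the toric volume formula of Burgos--Moriwaki--Philippon--Sombra; if it is not, I would replace $\phi$ by $\phi\pm t$ and use monotonicity in $t$.

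\textbf{Step 3: localization of the normalized peaks.} For smooth positive $\phi$, the function $|\chi^m|^2_{k\phi}/\|\chi^m\|^2_{L^2}$ concentrates, after multiplication by $k^{-n}$ and summation over $m$ near $km'$, on the torus orbit $\mathrm{Log}^{-1}(\nabla\psi^\vee(m'))$. This is the standard Bergman-kernel-for-torus argument, and it gives the full-lattice identity $k^{-n}\rho(\mu,k\phi)\mu\to\mu_\phi$. Restricting the sum to $\{\vartheta>0\}$, the limit of $k^{-n}\Theta\,\mu$ tested against a continuous $g$ becomes
\[
\int_{\{\vartheta>0\}} \bigl(\textstyle\int_{\mathrm{Log}^{-1}(\nabla\psi^\vee(m'))} g\bigr)\,dm'.
\]
Since $\nabla\psi$ pushes $\mu=\mathrm{MA}(\psi)$ forward to Lebesgue measure on $P$ (up to normalization), this integral equals $\int \mathds 1_U g\,\mu$ with
\[
U=\mathrm{Log}^{-1}\bigl(\nabla\psi^{-1}(\{\vartheta>0\})\bigr),
\]
which is measurable.

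\textbf{Main obstacle.} The hard part is Step 1 combined with uniformity. One must show that the off-diagonal structure of the theta average really factorizes in the orthogonal basis. One must also control boundary monomials with $m/k$ near $\partial P$ and near $\{\vartheta=0\}$, where neither Laplace asymptotics nor the dichotomy of $f$ is uniform. I would first prove matching upper and lower bounds with $\mathds 1_{\{\vartheta>\epsilon\}}$ and $\mathds 1_{\{\vartheta>-\epsilon\}}$, using $0\le f\le1$ and the bound $\Theta\le\rho$ from \eqref{eq:theta-leq-rho}. I would then let $\epsilon\to0$, using that the level set $\{\vartheta=0\}$ of the concave function $\vartheta$ has Lebesgue measure zero unless $\vartheta\equiv0$ on an open set. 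That degenerate case I would handle separately via the sandwich \eqref{eq:13_3}.
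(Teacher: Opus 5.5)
Your proposal is correct and is essentially the paper's argument: the exact diagonal formula of Proposition~\ref{1331} (cross terms vanish by the symmetry $a_i\mapsto -a_i$, so the factorization you flag as an obstacle is immediate); the sign of the Legendre transform $\check g(m/k)$ deciding, through $\|\chi^m\|_{\sup,k\phi}=e^{-k\check g(m/k)}$, Gromov's inequality and Lemma~\ref{lem:theta-estimate}, whether the one-dimensional theta factor is close to $1$ or negligible; localization of the normalized monomial peaks by strict concavity, together with the toric Bergman asymptotics; and an $\varepsilon$-layer around $\{\check g=0\}$, which you rightly control after integrating against test functions rather than pointwise. Two small fixes: as written, your $\vartheta=\lim_k\frac1k\log\|\chi^{km'}\|_{L^2,k\phi}$ has the wrong sign (you want its negative, the paper's $\check g$, so that $\vartheta>0$ means small sections); and since $c_1(\overline{\mathcal L}_\phi)>0$ makes $\check g$ strictly concave on $\mathrm{Int}(\Delta_D)$, the level set $\{\check g=0\}$ is always Lebesgue-null, so the degenerate case and the fallback to \eqref{eq:13_3} can be dropped.
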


\medskip

The robustness of the preceding results motivates the following conjecture.

\begin{conjecture}\label{conj1}
Let $(\mathcal{X},\mu)$ be a smooth projective arithmetic variety over $\mathbb{Z}$ endowed with a smooth volume form $\mu$, and let 
$\overline{\mathcal{L}}_\phi$ be a smooth Hermitian line bundle on $\mathcal{X}$. 
Then the $\limsup$ in the definition of 
\[
\widehat{\mu}^{\sup}_{\mathrm{eq}}\big((\mathcal{X},\mu),\overline{\mathcal{L}}_\phi\big)
\]
is in fact a limit. Moreover, this limit is independent of the choice of the auxiliary volume form $\mu$. We denote it by
\[
\widehat{\mu}^{}_{\mathrm{eq}}(\mathcal{X},\overline{\mathcal{L}}_\phi).
\]
\end{conjecture}

\medskip

\medskip

Having formulated Conjecture~\ref{conj1}, it becomes natural to clarify its relationship with the arithmetic volume. Indeed, the arithmetic volume function provides the fundamental asymptotic invariant governing the growth of global sections, and any intrinsic measure capturing arithmetic positivity should ultimately be reflected at the level of volume.

The arithmetic volume function is a central invariant in Arakelov geometry and has been studied extensively over the past decade; see \cite{Chen1, Chen-2011,Moriwaki1, YuanInventiones, Yuan_2009}. The \emph{arithmetic volume} of a Hermitian line bundle $\overline{\LL}_\phi$ is defined by
\[
\widehat{\mathrm{vol}}(\overline{\LL}_\phi)
:=
\limsup_{k\to \infty}
\frac{\hat{h}^0\!\left(\overline{H^0(\X,k\LL)}_{(\sup,k\phi)}\right)}
{k^{n+1}/(n+1)!},
\]
where
$
\hat{h}^0\!\left(\overline{H^0(\X,\LL)}_{(\sup,\phi)}\right)
:=
\log \# \left\{
s \in H^0(\X,\LL)
\mid
\|s\|_{\sup,\phi} \le 1
\right\}.$

In \cite{HajliTAMS}, we introduced the auxiliary quantities
\[
h_\theta^0\!\left(\overline{H^0(\X,k\LL)}_{(L^2,k\phi)}\right)
\quad\text{and}\quad
h_\theta^0\!\left(\overline{H^0(\X,k\LL)}_{(\sup,k\phi)}\right),
\]
which are essentially equivalent to
$\hat{h}^0\!\left(\overline{H^0(\X,k\LL)}_{(L^2,k\phi)}\right)$
and
$\hat{h}^0\!\left(\overline{H^0(\X,k\LL)}_{(\sup,k\phi)}\right)$,
respectively (see Equation~\eqref{defoftheta}).

Motivated by the arithmetic volume, we defined the modified asymptotic invariants
\[
\widehat{\mathrm{vol}}_{L^2,\theta}(\overline{\LL}_\phi)
:=
\limsup_{k\to \infty}
\frac{
h_\theta^0\!\left(\overline{H^0(\X,k\LL)}_{(L^2,k\phi)}\right)
}{
k^{n+1}/(n+1)!
},
\]
and
\[
\widehat{\mathrm{vol}}_{\sup,\theta}(\overline{\LL}_\phi)
:=
\limsup_{k\to \infty}
\frac{
h_\theta^0\!\left(\overline{H^0(\X,k\LL)}_{(\sup,k\phi)}\right)
}{
k^{n+1}/(n+1)!
}.
\]

These formulations are particularly well suited to the analytic approach developed here. First, it is shown in \cite{HajliTAMS} that
\begin{equation}\label{182_1}
\widehat{\mathrm{vol}}(\overline{\LL}_\phi)
=
\widehat{\mathrm{vol}}_{L^2,\theta}(\overline{\LL}_\phi),
\end{equation}
see Theorem~\ref{MorMor} for a concise proof. Theorem \ref{MorMor} shows that the arithmetic volume can be entirely recovered from the asymptotics of the Gaussian theta invariants $h_\theta^0$.

\medskip

Our next result connects the arithmetic volume directly with the arithmetic distortion function $\Theta(\mu,\phi)$ and the asymptotic first minimum
$\widehat{\mu}_{\max}(\overline{\LL}_\phi)$.
This provides a variational representation of arithmetic volume and clarifies the structural meaning of Conjecture~\ref{conj1}.

\begin{theorem}[Weak Representation Theorem \ref{Representation}]\label{RepresentationIntro}
The arithmetic volume satisfies
\begin{equation}\label{WeakConjecture1}
\widehat{\mathrm{vol}}(\overline{\LL}_\phi)
=
(n+1)!
\limsup_{k \to \infty}
\int_0^{\widehat{\mu}_{\max}(\overline{\LL}_\phi)}
\int_{\X(\C)}
\frac{1}{k^n}
\Theta(\mu, k(\phi-t))
\, {{\mu}} \, dt.
\end{equation}
\end{theorem}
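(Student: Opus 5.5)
We will prove the statement by combining equation \eqref{182_1} (Theorem~\ref{MorMor}) with a layer-cake formula for the Gaussian theta invariant in terms of $\Theta$. The guiding identity is the derivative of $h^0_\theta$ under the metric shift $\phi\mapsto\phi-t$. For a Euclidean lattice $\overline E=(E,\|\cdot\|)$ we have
\[
h^0_\theta(\overline E)=\log\sum_{s\in E}e^{-\pi\|s\|^2}.
\]
Since $\|s\|_{L^2,k(\phi-t)}=e^{kt}\|s\|_{L^2,k\phi}$ (with sign conventions fixed so that $\phi-t$ makes sections smaller, as in Definition~\ref{defTheta}), differentiation in $t$ gives
\[
\frac{d}{dt}h^0_\theta\Big(\overline{H^0(\X,k\LL)}_{(L^2,k(\phi-t))}\Big)=2\pi k\,\mathbb E_{\gamma_{k,t}}\big[\|s\|^2_{L^2,k(\phi-t)}\big],
\]
where $\gamma_{k,t}$ is the discrete Gaussian on the lattice. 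Writing $\|s\|^2_{L^2}=\int_{\X(\C)}|s|^2_{k(\phi-t)}\mu$ and exchanging expectation and integral, the right-hand side becomes $k\int_{\X(\C)}\Theta(\mu,k(\phi-t))\,\mu$, up to the normalisation built into Definition~\ref{defTheta}. Matching that normalisation to this derivative is the first thing we will check.

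Integrating from $t=0$ to $t=T$ then gives an exact identity:
\[
h^0_\theta\big(k(\phi-T)\big)-h^0_\theta\big(k\phi\big)=k\int_0^T\!\!\int_{\X(\C)}\Theta(\mu,k(\phi-t))\,\mu\,dt .
\]
Choose $T=\widehat\mu_{\max}(\overline\LL_\phi)+\epsilon$ and $T'=\widehat\mu_{\max}(\overline\LL_\phi)-\epsilon$. We use two facts about the asymptotic first minimum.
\begin{enumerate}
\item For $t>\widehat\mu_{\max}$, the shifted bundle $\overline\LL_{\phi-t}$ has no small sections for $k\gg0$. Its $h^0_\theta$ is then $o(k^{n+1})$, in fact exponentially small in $k$ by the standard Banaszczyk-type estimate. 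Applied to $\phi+t$ with $t>0$ (again $k\gg0$), the same estimate gives $h^0_\theta(k(\phi+t))=o(k^{n+1})$. Consequently the integrand $\int\Theta\,\mu$ is negligible for $t\ge T$.
\item The function $t\mapsto h^0_\theta(k(\phi-t))$ is monotone.
\end{enumerate}
The relevant integration range is therefore $[0,\widehat\mu_{\max}]$ after a suitable change of sign or reflection. We will align conventions so that $h^0_\theta(k\phi)$ is recovered as $k$ times the integral of $\Theta$ over $t\in[0,\widehat\mu_{\max}]$, plus a boundary term at $\widehat\mu_{\max}+\epsilon$ of size $o(k^{n+1})$.

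Dividing by $k^{n+1}/(n+1)!$ and taking $\limsup$ then gives \eqref{WeakConjecture1}, via \eqref{182_1}: $\widehat{\mathrm{vol}}=\widehat{\mathrm{vol}}_{L^2,\theta}$. The $\epsilon$-strips $[\widehat\mu_{\max},\widehat\mu_{\max}+\epsilon]$ must be shown to contribute only $O(\epsilon)$ after normalisation. This follows from the uniform bound $\frac1{k^n}\int\Theta(\mu,k(\phi-t))\mu\le \frac1{k^n}\int\rho(\mu,k(\phi-t))\mu=\frac{\dim H^0(\X_\Q,k\LL_\Q)}{k^n}=O(1)$, using \eqref{eq:theta-leq-rho}.

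The main obstacle is the boundary behaviour at the top, namely showing $h^0_\theta\big(\overline{H^0(\X,k\LL)}_{(L^2,k(\phi-t))}\big)=o(k^{n+1})$ uniformly once $t$ passes $\widehat\mu_{\max}$. This requires comparing the $L^2$ and sup norms via Gromov's inequality, at a cost of $O(\log k)$ shifts, and using the definition of $\widehat\mu_{\max}$ as the limit of normalised first minima. Once that is settled, the remaining steps are bookkeeping:
\begin{itemize}
\item exchanging $\limsup$ with the $t$-integral;
\item handling $\limsup$ rather than $\lim$, so the statement is only the weak representation;
\item dominated convergence with the above $O(1)$ bound, which justifies passing the normalisation inside the integral.
\end{itemize}
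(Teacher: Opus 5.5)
Your argument is correct in substance and shares the paper's skeleton. It uses the exact identity
\[
h^0_\theta\bigl(\overline{H^0(\X,k\LL)}_{(L^2,k\phi)}\bigr)-h^0_\theta\bigl(\overline{H^0(\X,k\LL)}_{(L^2,k(\phi-T))}\bigr)=k\int_0^T\!\!\int_{\X(\C)}\Theta(\mu,k(\phi-t))\,\mu\,dt,
\]
obtained by differentiating along the shift. It then uses the negligibility of the shifted term for $T>\widehat\mu_{\max}(\overline\LL_\phi)$, and Theorem~\ref{MorMor}.

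The difference is how you cut $[0,T]$ down to $[0,\widehat\mu_{\max}]$. The paper applies dominated convergence on $[\widehat\mu_{\max},T]\times\X(\C)$. It uses the pointwise vanishing of $k^{-n}\Theta(\mu,k(\phi-t))$ for $t>\widehat\mu_{\max}$ (Proposition~\ref{vanishingTheta}) and domination by the Bergman kernel. After dividing by $k^n$, that domination implicitly needs $\sup_{\X(\C)}\rho(\mu,k\phi)=O(k^n)$. You instead take $T=\widehat\mu_{\max}+\epsilon$, bound the strip by $\epsilon N_k$, and let $\epsilon\to0$, using that the left-hand $\limsup$ does not depend on $\epsilon$. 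This is more elementary. The only input is $\int_{\X(\C)}\Theta\,\mu=2\pi\,\mathbb E\|v\|^2\le N_k$, which is already Lemma~\ref{lemma2}, so Proposition~\ref{vanishingTheta} and any pointwise Bergman bound are bypassed.

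For the same reason your ``bookkeeping'' items are superfluous. The $\limsup$ stays outside the integral, so nothing is exchanged (an exchange would in general only give an inequality), and no dominated convergence is used. The step you call the main obstacle is also immediate. For $T>\widehat\mu_{\max}$ the first minimum of $\overline{H^0(\X,k\LL)}_{(L^2,k(\phi-T))}$ exceeds $1$ for $k\gg0$. Hence $\widehat h^0=0$, and \eqref{ArTheta} gives $h^0_\theta=O(N_k\log N_k)=O(k^n\log k)$.

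Two slips need fixing.

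First, the sign. With $\|s\|_{L^2,k(\phi-t)}=e^{kt}\|s\|_{L^2,k\phi}$, the shift $\phi-t$ makes sections \emph{larger}, contrary to your parenthetical. So $\frac{d}{dt}h^0_\theta=-2\pi k\,\mathbb E[\|s\|^2_{L^2,k(\phi-t)}]=-k\int_{\X(\C)}\Theta(\mu,k(\phi-t))\,\mu$. The normalisation in Definition~\ref{defTheta} matches exactly, but your integrated identity has the two $h^0_\theta$ terms in the wrong order: its left side is $\le 0$ while its right side is $\ge 0$. Correctly, $h^0_\theta(k\phi)=k\int_0^T\int\Theta\,\mu\,dt+h^0_\theta(k(\phi-T))$ with all terms nonnegative. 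No ``change of sign or reflection'' is needed.

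Second, the claim that $h^0_\theta(k(\phi+t))=o(k^{n+1})$ for $t>0$ is false. Indeed $\widehat\mu_{\max}(\overline\LL_{\phi+t})=\widehat\mu_{\max}(\overline\LL_\phi)+t$ and $h^0_\theta(k(\phi+t))\ge h^0_\theta(k\phi)$, so it fails as soon as $\widehat{\mathrm{vol}}(\overline\LL_\phi)>0$. You never use it, so delete it.
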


Our proof of~\eqref{WeakConjecture1} relies essentially on the identity~\eqref{182_1}. 
It may be viewed as a weak arithmetic analogue of the complex integral representation of the volume in~\eqref{B2}. 
In particular, it shows that the arithmetic distortion function controls the asymptotic growth of small sections, thereby providing a variational interpretation of the arithmetic volume.

On the one hand, an immediate consequence of~\eqref{WeakConjecture1} yields the inequality
\[
\widehat{\mathrm{vol}}(\overline{\mathcal{L}}_\phi)
\leq 
(n+1)!
\int_0^{\widehat{\mu}_{\max}(\overline{\mathcal{L}}_\phi)}
\int_{\mathcal{X}(\mathbb{C})}
\widehat{\mu}_{\mathrm{eq}}(\mathcal{X},\overline{\mathcal{L}}_{\phi-t})
\, \mu \, dt.
\]

\medskip

On the other hand, assuming Conjecture~\ref{conj1}, formula~\eqref{WeakConjecture1} yields the stronger integral representation
\begin{equation}\label{rep3}
\widehat{\mathrm{vol}}(\overline{\mathcal{L}}_\phi)
=
(n+1)!
\int_0^{\widehat{\mu}_{\max}(\overline{\mathcal{L}}_\phi)}
\int_{\mathcal{X}(\mathbb{C})}
\widehat{\mu}_{\mathrm{eq}}(\mathcal{X},\overline{\mathcal{L}}_{\phi-t})
\, \mu \, dt.
\end{equation}

In particular, this identity shows that the $\limsup$ in the definition of the arithmetic volume is in fact a limit, thereby recovering a theorem of Chen \cite{Chen1}.  Under the assumptions and hypothesis of Theorem \ref{thm:73_1}, the arithmetic volume admits a strong integral representation.  

\medskip

The representation \eqref{rep3} differs fundamentally from earlier integral formulae based on arithmetic Okounkov bodies and slope-theoretic constructions \cite{Chen, Burgos3, Yuan_2009, Yuan}. Instead, it arises from an intrinsic analytic variational framework and parallels the geometric philosophy introduced in \cite{Berman2009}.\\

We now further clarify the relationship between our results and earlier work, culminating in a new perspective on equidistribution phenomena.

\medskip

In their study of effective bounds for linear series on arithmetic varieties, Yuan and Zhang \cite{YuanZhang1, YuanZhang2} introduced the \emph{volume derivative}
$d\mathrm{vol}(\overline{\mathcal{L}}_\phi)$ and proved, using arithmetic Fujita approximation, that
\[
d\mathrm{vol}(\overline{\mathcal{L}}_\phi)
=
\frac{1}{n}
\lim_{t \to 0}
\frac{
\widehat{\mathrm{vol}}(\overline{\mathcal{L}}_{\phi+t})
-
\widehat{\mathrm{vol}}(\overline{\mathcal{L}}_\phi)
}{t}.
\]
By the differentiability of arithmetic volume established in \cite{Chen-2011}, this limit exists and is given by a certain intersection number 
$n\,
\langle \overline{\LL}_\phi^{\,n} \rangle
\cdot
\overline{\mathcal{O}}_0,$
where $\overline{\mathcal{O}}_0$ denotes the trivial Hermitian line bundle.

An important property of the invariant $d\mathrm{vol}(\overline{\mathcal{L}}_\phi)$ is that when
$\overline{\mathcal{L}}_\phi$ is nef, it coincides with the degree of
$\mathcal{L}_\mathbb{Q}$.

Our approach provides a different variational interpretation of this derivative.

\begin{theorem}[cf.~Theorem~\ref{corr271}]
Let $\overline{\LL}_\phi$ be a weakly nef Hermitian line bundle on a smooth projective arithmetic variety $\X$ and assume that $\LL$ is ample. Let $\eta$ be a nonnegative smooth weight on $\mathcal{O}$. Then
\[
\lim_{t\to 0^+}
\frac{1}{t}
\left(
\widehat{\mathrm{vol}}(\overline{\LL}_{\phi+t\eta})
-
\widehat{\mathrm{vol}}(\overline{\LL}_\phi)
\right)
=(n+1)!
\int_{\X(\C)} \eta\, \mu_\phi.
\]
\end{theorem}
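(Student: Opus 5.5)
The plan is to go through the weak representation formula of Theorem~\ref{RepresentationIntro}. The first point is that adding a constant weight only shifts the integration variable: $\Theta(\mu,k(\phi+t\eta-s))$ depends on the pair $(\phi+t\eta,s)$. The second point is that the positivity hypotheses should turn the $\limsup$ into an honest limit, with the integrand identified by Theorem~\ref{thm:23_1}.

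\textbf{Step 1: volume formula under positivity.} For $s$ with $\overline{\LL}_{\phi-s}$ weakly nef, Theorem~\ref{thm:23_1} gives the weak convergence
\[
\frac{1}{k^n}\Theta(\mu,k(\phi-s))\,\mu\to\mu_{\phi-s}=\mu_\phi .
\]
Here $\mu_{\phi-s}=\mu_\phi$ because a constant shift does not change the equilibrium measure. Its total mass is $\int\mu_\phi=\deg\LL_\Q/n!$. For $s>\widehat{\mu}_{\max}$ there are no small sections, and Proposition~\ref{vanishingTheta} gives zero. One needs a uniform bound $\frac1{k^n}\Theta\le\frac1{k^n}\rho(\mu,k(\phi-s))\le C$, which follows from \eqref{eq:theta-leq-rho} and the standard Bergman bound. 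Dominated convergence in $s$ then turns Theorem~\ref{RepresentationIntro} into a limit formula.

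\textbf{Step 2: first variation.} Set $\psi_t=\phi+t\eta$ with $\eta\ge0$. The difference of volumes equals $(n+1)!$ times the limit of
\[
\int_0^{\infty}\int_{\X(\C)}\frac1{k^n}\Bigl(\Theta(\mu,k(\psi_t-s))-\Theta(\mu,k(\phi-s))\Bigr)\mu\,ds .
\]
Since $\Theta$ is monotone in the weight (larger weight, smaller norms, larger theta value), this integrand is nonnegative. Heuristically, $\Theta(\mu,k(\phi-s))/k^n\approx\mathds 1_{\{s<\lambda(x)\}}$ as a density against $\mu_\phi$, where $\lambda$ is a local threshold; perturbing by $t\eta$ shifts the threshold at $x$ by $t\eta(x)$. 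The integral over $s$ therefore picks up $t\int\eta\,\mu_\phi+o(t)$.

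To make this rigorous I would sandwich the perturbation. For the upper bound, $\phi\le\psi_t\le\phi+t\sup\eta$, and the localization is obtained by partitioning $\X(\C)$ into small regions where $\eta$ is nearly constant. For the lower bound, use concavity of $\widehat{\mathrm{vol}}^{1/(n+1)}$ on nef classes, or equivalently the Yuan--Zhang/Chen differentiability. On the weakly nef locus this identifies the derivative with $(n+1)\,\overline{\LL}_\phi^{\,n}\cdot(\mathcal O,\eta)$. The arithmetic intersection number then equals $\frac12$-normalized $\int\eta\,c_1(\overline{\LL}_\phi)^n=n!\int\eta\,\mu_\phi$, up to normalization, which gives $(n+1)!\int\eta\,\mu_\phi$.

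\textbf{Main obstacle.} The hard step is justifying the localization: exchanging $t\to0$, $k\to\infty$ and the $s$-integral when $\psi_t$ itself need not be weakly nef. I would first try the cleanest route, which avoids Step 2's heuristics. Weak nefness of $\overline{\LL}_\phi$ with $\LL$ ample makes $\overline{\LL}_{\phi+t\eta}$ weakly nef for $\eta\ge0$, so its volume equals the self-intersection $\widehat{\deg}(\overline{\LL}_{\phi+t\eta}^{\,n+1})$, and differentiating that polynomial in $t$ gives the result directly. The $\Theta$-representation then serves as the measure-theoretic reinterpretation via Theorem~\ref{thm:23_1}.
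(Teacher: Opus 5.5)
\textbf{There is a genuine gap.} Your final ``cleanest route'' rests on a step that fails in the generality of the statement: weak nefness of $\overline{\LL}_{\phi+t\eta}$ does not give $\widehat{\mathrm{vol}}(\overline{\LL}_{\phi+t\eta})=\widehat{\deg}(\overline{\LL}_{\phi+t\eta}^{\,n+1})$.

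What survives adding $t\eta\ge0$ is the paper's weak nefness (Definitions~\ref{def:arith-pos1} and~\ref{def:weak-nef}). It only concerns generation by strictly small sections and imposes no curvature condition. The identity ``volume $=$ self-intersection'' needs a semipositive metric, and $dd^c(\phi+t\eta)$ need not be semipositive for any $t>0$; under these hypotheses $dd^c\phi$ need not be either. Since $\|s\|_{\sup,k\psi}=\|s\|_{\sup,kP_X\psi}$, the volume only sees the envelope $P_X(\phi+t\eta)$. So $t\mapsto\widehat{\mathrm{vol}}(\overline{\LL}_{\phi+t\eta})$ is not a polynomial. For non-semipositive $\phi$, differentiating your polynomial gives (up to normalization) $\int\eta\,(dd^c\phi)^n$ rather than $\int\eta\,\mathrm{MA}(P_X\phi)=\int\eta\,\mu_\phi$; this is why the statement is phrased with the equilibrium measure.

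Your route does work when $\phi$ has strictly positive curvature: then $\overline{\LL}_{\phi+t\eta}$ is nef in Zhang's sense for small $t$ and the nef volume formula applies. In general you would have to pass to the envelopes $P_X(\phi+t\eta)$ and differentiate the energy at equilibrium (a Berman--Boucksom type statement). That is the real content, and it is not supplied.

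The fallbacks do not close the gap either:
\begin{itemize}
\item Step~1 controls $\Theta(\mu,k(\phi-s))$ only while $\overline{\LL}_{\phi-s}$ stays weakly nef, i.e.\ below a threshold that is in general smaller than $\widehat{\mu}_{\max}(\overline{\LL}_\phi)$. The intermediate range is untouched: were the limit $\mu_\phi$ all the way up to $\widehat{\mu}_{\max}$, Theorem~\ref{Representation} would force equality in \eqref{eq:vol-mu}, which fails in general.
\item The localization by partitioning $\X(\C)$ presupposes the additivity in $\eta$ that you are trying to prove.
\item Chen's differentiability theorem needs bigness, which is not assumed. It also fails in the dynamical application, where $\widehat{\mathrm{vol}}(\overline{\LL}_{\phi_\infty})=0$.
\item Log-concavity of the volume gives one-sided inequalities, not the value of the derivative, so it is not equivalent to differentiability.
\end{itemize}

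\textbf{The missing idea} is to vary the weight along the path $\phi+\varepsilon+s\eta$, $s\in[0,1]$, instead of along the constant shifts $\phi-s$, which leave the weakly nef locus. Your own monotonicity observation is exactly what makes this work. Increasing the weight only shrinks sup-norms, so every $\overline{\LL}_{\phi+\varepsilon+s\eta}$ is weakly nef with $\LL$ ample. Theorem~\ref{main} then gives $k^{-n}\Theta(\mu,k(\phi+\varepsilon+s\eta))\,\mu\to\mu_{\phi+\varepsilon+s\eta}$ for every $s$. Lemma~\ref{difference} is an exact first-variation identity,
\[
h^0_\theta\bigl(\overline{H^0(\X,k\LL)}_{(L^2,k(\phi+\varepsilon+\eta))}\bigr)-h^0_\theta\bigl(\overline{H^0(\X,k\LL)}_{(L^2,k(\phi+\varepsilon))}\bigr)=k\int_{\X(\C)}\eta\Bigl(\int_0^1\Theta(\mu,k(\phi+\varepsilon+s\eta))\,ds\Bigr)\mu .
\]
Divide by $k^{n+1}/(n+1)!$, use $\Theta\le\rho=O(k^n)$ for dominated convergence, and apply Theorem~\ref{MorMor}. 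This gives $\widehat{\mathrm{vol}}(\overline{\LL}_{\phi+\varepsilon+\eta})-\widehat{\mathrm{vol}}(\overline{\LL}_{\phi+\varepsilon})=(n+1)!\int\eta\int_0^1\mu_{\phi+\varepsilon+s\eta}\,ds$. The paper then lets $\varepsilon\to0$ using \eqref{continuitytrivial} and the weak continuity of $\psi\mapsto\mu_\psi$. It replaces $\eta$ by $t\eta$ and lets $t\to0^+$, using weak continuity again. No self-intersection formula, Fujita approximation or bigness enters.
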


Under these assumptions, the volume derivative of Yuan and Zhang is therefore expressed as the integral of the equilibrium measure $\mu_\phi$. 

This strongly suggests that, in greater generality, the volume derivative should admit an integral representation in terms of the arithmetic equilibrium measure $\widehat{\mu}^{\sup}_{\mathrm{eq}}(\X,\overline{\LL}_\phi)$. More precisely, assuming Conjecture~\ref{conj1}, Lemma~3.3 of \cite{YuanZhang2} implies
\begin{equation}\label{cc2}
\int_{\X(\C)}
\widehat{\mu}^{}_{\mathrm{eq}}(\X,\overline{\LL}_\phi)
=
d\mathrm{vol}(\overline{\LL}_\phi),
\end{equation}
providing a refined and intrinsic interpretation of the invariant $d\mathrm{vol}(\overline{\LL}_\phi)$.

\medskip

In this direction, and independently of the existing approaches,
we prove that
\[
\widehat{\mathrm{vol}}(\overline{\LL}_\phi)
-
\widehat{\mathrm{vol}}(\overline{\LL}_\psi)
=(n+1)!
\int_{\X(\C)}
(\phi-\psi)
\left(
\int_0^1
\widehat{\mu}^{\sup}_{\mathrm{eq}}
\bigl(
\X,
\overline{\LL}_{\phi_t}
\bigr)
\, dt
\right),
\]
where $\phi_t=t\psi+(1-t)\phi$ for $t\in[0,1]$, and where $\phi$ and $\psi$ are smooth weights such that
$\overline{\LL}_\phi$ and $\overline{\LL}_\psi$ are nef and $\LL$ is ample
(see the proof of Theorem~\ref{corr271}).
Conjecture~\ref{conj1} would extend this identity to full generality.

This identity provides a precise variational formula for the arithmetic volume
with respect to a change of metric.
It may be viewed as an  analogue of the classical variation formula
for heights:
\[
h_{\overline{\mathcal{L}}_\phi}(\X)
-
h_{\overline{\mathcal{L}}_\psi}(\X)
=
\int_{\X(\C)}
(\phi-\psi)
\left(
\int_0^1 \mathrm{MA}(\phi_t)\, dt
\right),
\]
where $\mathrm{MA}(\phi)$ denotes the Monge--Amp\`ere measure associated with $\phi$
(cf.~\cite[Proposition~3.2.2]{BoGS}).
When the metrics are semipositive, $\mathrm{MA}(\phi)$ coincides with the corresponding equilibrium measure.
In this sense, the invariant $\widehat{\mu}^{\sup}_{\mathrm{eq}}$ plays the role of an arithmetic
Monge--Amp\`ere density governing the variation of the volume.

\medskip

We emphasize that 
\eqref{cc2}
 is  established under the assumptions that
$\LL$ is ample and $\overline{\LL}_\phi$ is weakly nef
(see Theorem~\ref{corr271}).
These hypotheses are natural and parallel those appearing in Yuan's
equidistribution theorem for points of small height \cite{YuanInventiones}.

\medskip

We conclude this introduction by explaining how the theory developed here
leads naturally to a new perspective on the equidistribution of small points.
In particular, we show that the arithmetic distortion function  and the
arithmetic volume function provide a conceptually intrinsic framework for
this problem.

\medskip

We begin with a brief review of the general setting. Given $\eta \in \mathbb{R}$, denote by 
$\X(\overline{\mathbb{Q}})_{\leq \eta}$ the set of algebraic points 
$x \in \X(\overline{\mathbb{Q}})$ such that 
$\mathrm{h}_{\overline{\LL}_\phi}(x) \leq \eta$. 
The \emph{essential minimum} of $\X$ with respect to 
$\overline{\LL}_\phi$ is defined as
\[
\mu^{\mathrm{ess}}_{\overline{\LL}_\phi}(\X)
=
\inf \left\{ \eta \in \mathbb{R}
\;\middle|\;
\X(\overline{\mathbb{Q}})_{\le \eta}
\text{ is Zariski dense in } \X
\right\}.
\]

Zhang's fundamental inequality \cite{Zha95} asserts that
\begin{equation}\label{eq:zhang_bound}
\frac{h_{\overline{\LL}_\phi}(\X)}
{(n+1)\deg_{\LL_\mathbb{Q}}(\X_\mathbb{Q})}
\leq 
\mu_{\overline{\LL}_\phi}^{\mathrm{ess}}(\X).
\end{equation}
On the other hand, the asymptotic maximal slope
$\widehat{\mu}_{\max}(\overline{\LL}_\phi)$ satisfies
\[
\widehat{\mu}_{\max}(\overline{\LL}_\phi)
\leq
\mu_{\overline{\LL}_\phi}^{\mathrm{ess}}(\X),
\]
together with the volume inequality
\begin{equation}\label{eq:vol-mu}
\frac{\widehat{\mathrm{vol}}(\overline{\LL}_\phi)}
{(n+1)\,\mathrm{vol}(\LL_\mathbb{Q})}
\leq
\widehat{\mu}_{\max}(\overline{\LL}_\phi).
\end{equation}

More recently, Burgos--Philippon--Sombra \cite{Burgos_distribution}
introduced the notion of a \emph{quasi-canonical} Hermitian line bundle,
for which equality holds in \eqref{eq:zhang_bound}, in their study of the
distribution of Galois orbits of small points on proper toric varieties.
They obtained a complete description of equidistribution in that setting.

\medskip

In the context of the present work, and still under mild positivity
assumptions, equality in \eqref{eq:zhang_bound}
implies equality in \eqref{eq:vol-mu}.
This observation plays a structural role in our approach.
Rather than working primarily with the height function,
we place the arithmetic volume function at the center of the theory.

Classically, height theory has been the fundamental tool in
equidistribution problems.
This is largely due to the flexibility of heights under metric variation,
which is crucial in perturbation arguments leading to equidistribution
theorems (see, for example,
\cite{BermanBoucksom, CL2006, SUZ}).
However, the theory of heights relies on the full machinery of
arithmetic intersection theory
\cite{BoGS, AIT, Character, Character2},
later reformulated in the adelic framework by Zhang \cite{Zhang}
and extended by Chambert-Loir and Gubler
\cite{Gubler2003, CL2006}.

In contrast, the arithmetic volume function may at first appear too rigid
for such applications.
One of the main messages of this paper is that this rigidity is deceptive:
the asymptotic structure of the arithmetic volume, together with the
arithmetic distortion invariants introduced here,
encodes precisely the analytic information required for equidistribution.
In this sense, the distribution of Galois orbits is intrinsically governed
by the asymptotic behavior of the arithmetic volume function.

\medskip

We illustrate this perspective in the setting of arithmetic dynamics.
Let $(\mathcal{X}, f, \mathcal{L})$ be an algebraic dynamical system over
$\mathbb{Z}$, and let
$\overline{\mathcal{L}}_{\phi_\infty}$ denote the canonical metric
characterized by
\[
f^\ast \overline{\mathcal{L}}_{\phi_\infty}
\simeq
\overline{\mathcal{L}}_{\phi_\infty}^{\otimes d}.
\]
Assume that $\LL$ is ample.
For a generic sequence of small points $(x_m)_{m \in \mathbb{N}}$
with respect to $\overline{\mathcal{L}}_{\phi_\infty}$,
let $\mu_{x_m}$ be the probability measures supported on
their Galois orbits.

Using the framework developed in this paper,
based on the asymptotics of arithmetic distortion functions
and the arithmetic volume function,
we recover Yuan's equidistribution theorem, in the case when $\LL$ is ample, in an intrinsic and analytic manner:
\[
\lim_{m \to \infty}
\int_{\mathcal{X}(\mathbb{C})} g \, {{\mu}}_{x_m}
=
\int_{\mathcal{X}(\mathbb{C})} g \, {{\mu}}_{\phi_\infty},
\qquad
\forall g \in \mathscr{C}^0(\mathcal{X}(\mathbb{C})).
\]

This reformulation shows that equidistribution is naturally governed
by the variational structure of the arithmetic volume.
It provides a conceptual alternative to height-based arguments
and suggests that the arithmetic volume function
is the fundamental asymptotic invariant underlying
the distribution of small points.






{}
\section{Distortion functions, equilibrium measures, and geometric volume}
\label{sec2}

In this section we recall the analytic notions that will serve as a model
for the arithmetic theory developed later.

\medskip

Let $X$ be a compact complex manifold of dimension $n$, and let $\mu$
be a smooth positive volume form on $X$.
Let $L$ be a holomorphic line bundle on $X$.

A \emph{weight} $\phi$ on $L$ is a locally integrable function on
the complement of the zero section of $L^\ast$ satisfying the
log-homogeneity condition
\[
\phi(\lambda v) = \log|\lambda| + \phi(v)
\quad \text{for } v \in L^\ast \setminus \{0\},\ \lambda \in \C^\ast.
\]
Such a weight defines a Hermitian metric on $L$, denoted by
$\|\cdot\|_\phi$.

\medskip

If $\phi$ is continuous, the space of global sections $H^0(X,L)$
carries the $L^2$-norm
\[
\|s\|_{(L^2,\phi)}^2
:=
\int_X \|s(x)\|_\phi^2 \, {{\mu}},
\quad s \in H^0(X,L),
\]
as well as the sup-norm
\[
\|s\|_{\sup,\phi}
:=
\sup_{x \in X} \|s(x)\|_\phi.
\]

\medskip

The \emph{Bergman distortion function} associated to $(\mu,\phi)$ is
\begin{equation}\label{distortion}
\rho(\mu,\phi)(x)
:=
\sup_{s \in H^0(X,L)\setminus\{0\}}
\frac{\|s(x)\|_\phi^2}{\|s\|_{(L^2,\phi)}^2}.
\end{equation}
If $\{s_1,\dots,s_N\}$ is an $(L^2,\phi)$-orthonormal basis of
$H^0(X,L)$, then
\[
\rho(\mu,\phi)(x)
=
\sum_{j=1}^N \|s_j(x)\|_\phi^2.
\]

A fundamental property is
\begin{equation}\label{intRho}
\int_X \rho(\mu,\phi)\, {{\mu}}
=
\dim_\C H^0(X,L).
\end{equation}

\begin{definition}\label{defBM}
We say that $\mu$ satisfies the \emph{Bernstein--Markov property}
with respect to $\|\cdot\|_\phi$ if for every $\varepsilon>0$,
\[
\sup_X \rho(\mu,k\phi)^{1/2}
=
O(e^{k\varepsilon})
\quad \text{as } k\to\infty.
\]
\end{definition}

If $\mu$ is smooth and positive and $\phi$ is continuous, then
$\mu$ satisfies the Bernstein--Markov property
\cite[Lemma~3.2]{BermanBoucksom}.

\medskip

Assume now that $L$ is big.
For smooth weights $\phi$ and $\psi$ on $L$, define the
Monge--Amp\`ere measure
\[
\mathrm{MA}(\phi) := (dd^c \phi)^n.
\]
The Monge--Amp\`ere energy functional $\mathcal E$ is defined by
\[
\mathcal E(\overline L_\phi)
-
\mathcal E(\overline L_\psi)
=
\frac{1}{n+1}
\sum_{j=0}^n
\int_X
(\phi-\psi)
(dd^c \phi)^j
(dd^c \psi)^{n-j}.
\]

The functional $\phi \mapsto \mathcal E(\overline L_\phi)$
is concave and nondecreasing on the space of plurisubharmonic
weights, and is G\^ateaux differentiable. Moreover,
\[
\frac{d}{dt}
\Big(
\mathcal E(\overline L_{t\psi+(1-t)\phi})
-
\mathcal E(\overline L_\phi)
\Big)_{|_{t=0^+}}
=
\int_X (\phi-\psi)\, \mathrm{MA}(\phi).
\]

\medskip

The \emph{equilibrium weight} associated to $\phi$ is defined by
\begin{equation}\label{equiweight}
P_X\phi
=
\sup{}^\ast
\left\{
\vartheta \;\middle|\;
\vartheta \text{ psh weight on } L,\;
\vartheta \le \phi
\right\},
\end{equation}
where ${}^\ast$ denotes upper semicontinuous regularization
(see \cite{DemaillyLivre}).
The associated equilibrium measure is
\[
\mu_\phi := \mathrm{MA}(P_X\phi).
\]

\medskip

If $\phi$ is $\mathscr C^2$, Berman \cite{Berman2009} proved that
\[
\frac{1}{k^n}
\rho(\mu,k\phi)\,\mu
\longrightarrow
\mu_\phi
\quad \text{weakly as } k\to\infty,
\]
and that
\[
\mathrm{vol}(L)
=
n! \int_X \mu_\phi,
\]
where $
\mathrm{vol}(L)
=
\limsup_{k\to\infty}
\frac{\dim H^0(X,kL)}{k^n/n!}.$

\section{Normed $\mathbb{Z}$-modules}\label{SectionTheta}

A \emph{normed $\mathbb{Z}$-module} $\overline{E}=(E, \|\cdot\|)$ is a $\mathbb{Z}$-module of finite type $E$ endowed with a norm $\|\cdot\|$ on the $\mathbb{C}$-vector space $E_{\mathbb{C}} = E \otimes_{\mathbb{Z}} \mathbb{C}$. Let $E_{\mathrm{tors}}$ denote the torsion submodule of $E$, $E_{\mathrm{free}} = E / E_{\mathrm{tors}}$ its torsion-free part, and $E_{\mathbb{R}} = E \otimes_{\mathbb{Z}} \mathbb{R}$. We denote the unit ball in $E_{\mathbb{R}}$ by:
\[
B(\overline{E}) = \{m \in E_{\mathbb{R}} \mid \|m\| \leq 1\}.
\]
There exists a unique Haar measure on $E_{\mathbb{R}}$ such that $\mathrm{vol}(B(\overline{E})) = 1$. With respect to this measure, the \emph{arithmetic Euler characteristic} is defined as:
\[
\widehat{\chi}(\overline{E}) = \log \# E_{\mathrm{tors}} - \log \mathrm{vol}(E_{\mathbb{R}} / E_{\mathrm{free}}).
\]
Equivalently, for any choice of Haar measure on $E_{\mathbb{R}}$, we have:
\[
\widehat{\chi}(\overline{E}) = \log \# E_{\mathrm{tors}} - \log \left( \frac{\mathrm{vol}(E_{\mathbb{R}} / E_{\mathrm{free}})}{\mathrm{vol}(B(\overline{E}))} \right).
\]

The \emph{arithmetic degree} of $\overline{E}$ is defined as follows:
\[
\widehat{\deg}(\overline{E}) = \widehat{\chi}(\overline{E}) - \widehat{\chi}(\overline{\mathbb{Z}}^n),
\]
where $n = \mathrm{rank}(E_{\mathbb{R}})$ and $\widehat{\chi}(\overline{\mathbb{Z}}^n) = -\log \left( \Gamma(\frac{n}{2}+1)\pi^{-\frac{n}{2}} \right)$.

\medskip

\noindent \textsc{Theta Invariants and Successive Minima}

For $t > 0$, we define the theta function associated with $\overline{E}$ as:
\begin{equation}\label{defsmalltheta}
\theta_{\overline{E}}(t) = \sum_{v \in E} e^{-\pi t \|v\|^2}.
\end{equation}
We denote by $\widehat{H}^0(\overline{E})$ the set of small sections, by $\widehat{h}^0(\overline{E})$ its counting function, and by $h_\theta^0(\overline{E})$ the $\theta$-invariant:
\begin{equation}\label{defoftheta}
\widehat{H}^0(\overline{E}) = \{ m \in E \mid \|m\| \leq 1 \}, \quad \widehat{h}^0(\overline{E}) = \log \# \widehat{H}^0(\overline{E}), \quad h_\theta^0(\overline{E}) = \log \theta_{\overline{E}}(1).
\end{equation}
The higher-degree variants are defined as:
\[
\widehat{H}^1(\overline{E}) := \widehat{H}^0(\overline{E}^\vee), \quad \widehat{h}^1(\overline{E}) := \widehat{h}^0(\overline{E}^\vee), \quad h_\theta^1(\overline{E}) := h_\theta^0(\overline{E}^\vee),
\]
where $\overline{E}^\vee$ is the dual module $E^\vee = \mathrm{Hom}_{\mathbb{Z}}(E, \mathbb{Z})$ equipped with the dual norm:
\[
\|f\|^\vee = \sup_{m \in E_{\mathbb{R}} \setminus \{0\}} \frac{|f(m)|}{\|m\|}, \quad \forall f \in E^\vee.
\]
The \emph{first and $n$-th successive minima} are given respectively by:
\begin{equation}\label{deflambda1}
\lambda_1(\overline{E}) := \inf \{ \|v\| \mid v \in E \setminus \{0\} \},
\end{equation}
and
\begin{equation}\label{deflambdan}
\lambda_{n}(\overline{E}) := \inf \{ \lambda > 0 \mid \mathrm{Span}_{\mathbb{Q}} \left( B(E, \lambda^{-1}\|\cdot\|) \cap E \right) = E \}.
\end{equation}

\medskip

\noindent\textsc{Poisson Formula and Comparisons} 

When $\overline{E}$ is \emph{Euclidean}, the Poisson summation formula implies the following identity:
\begin{equation}\label{poisson}
\sum_{v \in E} e^{-\pi \|v\|^2} = \mathrm{covol}(\overline{E})^{-1} \sum_{v^\vee \in E^\vee} e^{-\pi \|v^\vee\|_{\overline{E}^{\vee}}^2}.
\end{equation}
In terms of the invariants defined above, this identity is expressed as:
\[
h_\theta^0(\overline{E}) - h_\theta^1(\overline{E}) = \widehat{\deg}(\overline{E}).
\]
Furthermore, $h_\theta^0(\overline{E})$ and $\widehat{h}^0(\overline{E})$ coincide up to an error term bounded only by the rank $n$ of $E$:
\begin{equation}\label{ArTheta}
h_\theta^0(\overline{E}) - \frac{n}{2} \log n + \log(1 - \tfrac{1}{2\pi}) \leq \widehat{h}^0(\overline{E}) \leq h_\theta^0(\overline{E}) + \pi,
\end{equation}
as shown in \cite[Theorem 3.1.1]{BostTheta}.

\begin{lemma}\label{lemma2} 
Let $\overline{E}$ be a Euclidean lattice of rank $n$. For all $t > 0$, we have:
\[
\sum_{v \in E} \|v\|^2 e^{-\pi t \|v\|^2} \leq \frac{n}{2\pi t} \sum_{v \in E} e^{-\pi t \|v\|^2}.
\]
\end{lemma}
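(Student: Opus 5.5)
The plan is to recognize the left-hand side as a derivative of the theta function and to control it through the Poisson summation formula \eqref{poisson}. Set $f(t)=\theta_{\overline{E}}(t)=\sum_{v\in E}e^{-\pi t\|v\|^2}$. Then
\[
-\frac{1}{\pi}f'(t)=\sum_{v\in E}\|v\|^2e^{-\pi t\|v\|^2},
\]
so the claim is equivalent to $-t f'(t)/f(t)\le n/2$, that is,
\[
\frac{d}{dt}\log\bigl(t^{n/2}f(t)\bigr)\ge 0 .
\]
In other words, it suffices to show that $t\mapsto t^{n/2}\theta_{\overline{E}}(t)$ is nondecreasing on $(0,\infty)$.

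First, reduce to the free case. If $E$ has torsion, each torsion class contributes a factor to both sums equally (torsion elements have norm $0$ on $E_{\mathbb{C}}$). Both sides therefore scale by $\#E_{\mathrm{tors}}$, and we may assume $E$ is a lattice of rank $n$ in the Euclidean space $E_{\mathbb{R}}$.

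Next, apply \eqref{poisson} to the rescaled lattice $(E,\sqrt{t}\,\|\cdot\|)$. Its covolume is $t^{n/2}\mathrm{covol}(\overline{E})$, and its dual norm is $t^{-1/2}\|\cdot\|^\vee$. This gives
\[
t^{n/2}\theta_{\overline{E}}(t)=\mathrm{covol}(\overline{E})^{-1}\sum_{w\in E^\vee}e^{-\pi\|w\|^{\vee 2}/t}.
\]
Each term $e^{-\pi\|w\|^{\vee2}/t}$ is a nondecreasing function of $t$, since the exponent $-\pi\|w\|^{\vee 2}/t$ increases with $t$. Hence the right-hand side is nondecreasing, and so is $t^{n/2}\theta_{\overline{E}}(t)$.

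Finally, differentiate termwise. This is justified because the series and its $t$-derivative converge locally uniformly on $(0,\infty)$ by Gaussian decay. We obtain $\frac{n}{2t}f(t)+f'(t)\ge 0$, which rearranges exactly to the inequality of Lemma~\ref{lemma2}.

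The only delicate point is the scaling bookkeeping in the Poisson formula. One must check that the dual of $\sqrt t\,\|\cdot\|$ is $t^{-1/2}\|\cdot\|^\vee$ and that the covolume scales by $t^{n/2}$. Both follow immediately from the Euclidean structure.
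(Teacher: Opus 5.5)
Your proof is correct and is essentially the paper's argument. The paper log-differentiates the Poisson identity and drops the nonnegative dual term, which is exactly $\frac{d}{dt}\log\bigl(t^{n/2}\theta_{\overline{E}}(t)\bigr)$, while you get the same nonnegativity from the termwise monotonicity of $t^{n/2}\theta_{\overline{E}}(t)$ (the monotonicity the paper itself later cites from Bost), and your torsion reduction is harmless but unnecessary since a Euclidean lattice is free.
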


\begin{proof}
By applying logarithmic differentiation with respect to $t$ to the Poisson summation formula, we obtain:
\begin{equation}\label{811}
2\pi \sum_{v \in E} t \|v\|^2 \frac{e^{-\pi t\|v\|^2}}{\sum_{u \in E} e^{-\pi t\|u\|^2}} = n - \frac{2\pi}{t} \sum_{v^\vee \in E^\vee} \|v^\vee\|_{\overline{E}^{\vee}}^2 \frac{e^{-\frac{\pi}{t}\|v^\vee\|_{\overline{E}^{\vee}}^2}}{\sum_{u^\vee \in E^\vee} e^{-\frac{\pi}{t}\|u^\vee\|_{\overline{E}^{\vee}}^2}},
\end{equation}
for all $t > 0$. Since the second term on the right-hand side is non-negative, it follows that:
\[
2\pi t \sum_{v \in E} \|v\|^2 e^{-\pi t\|v\|^2} \leq n \sum_{v \in E} e^{-\pi t\|v\|^2}.
\]
Dividing by $2\pi t$ yields the desired inequality.
\end{proof}

\section{A second-moment Gaussian inequality for Euclidean lattices
}

In this section we establish a second-moment Gaussian tail estimate for Euclidean lattices.
This inequality will serve as the analytic backbone of our proof of  Proposition \ref{vanishingTheta} and Theorems \ref{Thetafinitesum}, \ref{thm:enveloppe} and \ref{main}, allowing us to control the contribution of large-norm sections in the arithmetic volume asymptotics.

\medskip

A classical Gaussian tail bound for Euclidean lattices, due to Banaszczyk \cite{Bana}, plays a central role in transference theory.

\begin{proposition}\label{ineq1} 
Let $\overline{E}$ be a Euclidean lattice of rank $n$ and $\alpha > 0$. For every $r \geq \sqrt{\frac{n+2}{2\pi \alpha}}$, we have:
\begin{equation}\label{Bineq}
\frac{2\pi}{n} \frac{ \sum_{v \in E, \|v\| \geq r} \alpha \|v\|^2 e^{-\pi \alpha \|v\|^2} }{\sum_{v \in E} e^{-\pi \alpha \|v\|^2}} \leq \alpha^{1+n/2} e^{-\pi \alpha r^2} \frac{e^{1+n/2} }{ \left( \frac{n+2}{2\pi r^2} \right)^{1+n/2} }.
\end{equation}
In particular, for $\alpha = 1$ and $r = \tilde{r} \sqrt{\frac{n+2}{2\pi}}$ with $\tilde{r} > 1$, we obtain:
\begin{equation}\label{Bineq2}
\frac{2\pi}{n} \frac{ \sum_{v \in E, \|v\| \geq r} \|v\|^2 e^{-\pi \|v\|^2} }{\sum_{v \in E} e^{-\pi \|v\|^2}} \leq \left( \tilde{r}^2 e^{-(\tilde{r}^2-1)} \right)^{\frac{n+2}{2}}.
\end{equation}
In particular, for any fixed  $\tilde r>1$, the right-hand side decays exponentially in the rank  $n$.
\end{proposition}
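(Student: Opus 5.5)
The plan is to prove \eqref{Bineq} by comparing the Gaussian sum at parameter $\alpha$ with the one at a smaller parameter $s\alpha$, $0<s\le 1$. The comparison uses two earlier facts: Lemma~\ref{lemma2}, which bounds the second moment, and the Poisson formula \eqref{poisson}, which controls how the theta function changes under rescaling. At the end I optimise over $s$. Throughout write $\theta(t)=\theta_{\overline{E}}(t)=\sum_{v\in E}e^{-\pi t\|v\|^2}$.

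\textbf{Step 1 (splitting off the tail factor).} For $\|v\|\ge r$ write $e^{-\pi\alpha\|v\|^2}=e^{-\pi\alpha s\|v\|^2}\,e^{-\pi\alpha(1-s)\|v\|^2}$ and bound the second factor by $e^{-\pi\alpha(1-s)r^2}$. Enlarging the sum to all of $E$ and applying Lemma~\ref{lemma2} with $t=s\alpha$ gives
\[
\sum_{\|v\|\ge r}\|v\|^2e^{-\pi\alpha\|v\|^2}\le e^{-\pi\alpha(1-s)r^2}\sum_{v\in E}\|v\|^2e^{-\pi s\alpha\|v\|^2}\le e^{-\pi\alpha(1-s)r^2}\,\frac{n}{2\pi s\alpha}\,\theta(s\alpha).
\]

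\textbf{Step 2 (scaling of theta).} By \eqref{poisson} applied to the lattice with norm $\sqrt{t}\,\|\cdot\|$, we have $t^{n/2}\theta(t)=\mathrm{covol}(\overline{E})^{-1}\sum_{v^\vee}e^{-\pi\|v^\vee\|^2/t}$. The right-hand side is nondecreasing in $t$, so $t\mapsto t^{n/2}\theta(t)$ is nondecreasing. Hence $\theta(s\alpha)\le s^{-n/2}\theta(\alpha)$ for $s\le1$. Combining with Step 1 yields
\[
\frac{2\pi}{n}\,\frac{\sum_{\|v\|\ge r}\alpha\|v\|^2e^{-\pi\alpha\|v\|^2}}{\theta(\alpha)}\le e^{-\pi\alpha r^2}\,e^{\pi\alpha s r^2}\,s^{-(1+n/2)} .
\]

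\textbf{Step 3 (optimisation).} Minimising $e^{\pi\alpha s r^2}s^{-(1+n/2)}$ over $s$ gives $s=\frac{n+2}{2\pi\alpha r^2}$. This choice is admissible ($s\le1$) exactly under the hypothesis $r\ge\sqrt{(n+2)/(2\pi\alpha)}$, and it makes $e^{\pi\alpha s r^2}=e^{1+n/2}$. Moreover
\[
s^{-(1+n/2)}=\alpha^{1+n/2}\Bigl(\frac{n+2}{2\pi r^2}\Bigr)^{-(1+n/2)},
\]
which is precisely \eqref{Bineq}.

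\textbf{Step 4 (specialisation).} For \eqref{Bineq2}, take $\alpha=1$ and $r=\tilde r\sqrt{(n+2)/(2\pi)}$. Then $s=\tilde r^{-2}$ and $\pi r^2=\tilde r^2(n+2)/2$, so the bound becomes $\tilde r^{\,n+2}e^{-(\tilde r^2-1)(n+2)/2}=(\tilde r^2e^{-(\tilde r^2-1)})^{(n+2)/2}$. Since $xe^{-(x-1)}<1$ for $x>1$, the base is $<1$, which gives the exponential decay in $n$.

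The only genuinely non-formal point is the monotonicity of $t^{n/2}\theta(t)$ in Step 2, which comes directly from Poisson summation. Everything else is bookkeeping of exponents.
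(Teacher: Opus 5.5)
Your proof is correct and follows the paper's argument: split off the factor $e^{-\pi\alpha(1-s)\|v\|^2}$ (the paper's parameter is $t=s\alpha$), apply Lemma~\ref{lemma2}, use the monotonicity of $t\mapsto t^{n/2}\theta_{\overline{E}}(t)$, and optimise at $t_0=\frac{n+2}{2\pi r^2}$. The only difference is that you derive this monotonicity directly from the Poisson formula, whereas the paper cites Bost's Lemma~3.1.4.
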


\begin{proof}
Let $\alpha > 0$. For any $t \in (0, \alpha]$, we observe:
\[
\begin{aligned}
\sum_{v \in E, \|v\| \geq r} \|v\|^2 e^{-\pi \alpha \|v\|^2} &= \sum_{v \in E, \|v\| \geq r} \|v\|^2 e^{-\pi (\alpha - t) \|v\|^2} e^{-\pi t \|v\|^2} \\
&\leq e^{-\pi (\alpha - t)r^2} \sum_{v \in E} \|v\|^2 e^{-\pi t \|v\|^2}.
\end{aligned}
\]
Applying Lemma \ref{lemma2} to the right-hand side yields:
\[
\sum_{v \in E, \|v\| \geq r} \|v\|^2 e^{-\pi \alpha \|v\|^2} \leq e^{-\pi (\alpha - t)r^2} \frac{n}{2\pi t} \sum_{v \in E} e^{-\pi t \|v\|^2}.
\]
Recall that for a Euclidean lattice $\overline{E}$, the function $t \mapsto \log \theta_{\overline{E}}(t) + \frac{n}{2} \log t$ is increasing on $\mathbb{R}_{>0}$ (see \cite[Lemma 3.1.4]{BostTheta}). This monotonicity implies $\theta_{\overline{E}}(t) \leq ( \alpha / t )^{n/2} \theta_{\overline{E}}(\alpha)$ for $t \leq \alpha$. Substituting this into the inequality, we obtain:
\[
\frac{\sum_{v \in E, \|v\| \geq r} \alpha \|v\|^2 e^{-\pi \alpha \|v\|^2} }{\sum_{v \in E} e^{-\pi \alpha \|v\|^2}} \leq \frac{n \alpha^{1+n/2} e^{-\pi \alpha r^2}}{2\pi} \cdot \frac{e^{\pi r^2 t}}{t^{1+n/2}}.
\]
To minimize the right-hand side, we consider the function $f(t) = e^{\pi r^2 t} t^{-(1+n/2)}$. Its derivative vanishes at $t_0 = \frac{n+2}{2\pi r^2}$. By our hypothesis $r \geq \sqrt{\frac{n+2}{2\pi \alpha}}$, it follows that $t_0 \in (0, \alpha]$. Evaluating the expression at $t = t_0$ yields:
\[
\frac{n \alpha^{1+n/2} e^{-\pi \alpha r^2}}{2\pi} \cdot \frac{e^{1+n/2}}{\left( \frac{n+2}{2\pi r^2} \right)^{1+n/2}},
\]
which proves \eqref{Bineq}. The specialized form \eqref{Bineq2} follows by setting $\alpha=1$ and substituting the given value for $r$.
\end{proof}

\section{Arithmetic theta invariants 
 on arithmetic varieties}\label{sec4}

In this section we introduce theta-type invariants for Hermitian line bundles on arithmetic varieties and develop an arithmetic analogue of the variational framework established by Robert Berman \cite{Berman2009} for the complex distortion function.

\medskip

Let $\mathcal{X}$ be a smooth projective arithmetic variety over $\mathbb{Z}$ of dimension $n+1$. Let $X := \mathcal{X}(\mathbb{C})$ denote the associated complex manifold and $L := \mathcal{L}(\mathbb{C})$ the corresponding holomorphic line bundle. We fix a smooth, normalized volume form $\mu$ on $X$ (i.e. $\int_X\mu=1$) and let $\overline{\mathcal{L}}_\phi = (\mathcal{L}, \|\cdot\|_\phi)$ be a continuous Hermitian line bundle on $\mathcal{X}$. For any $k \in \mathbb{N}$, let $N_k$ denote the rank of the $\mathbb{Z}$-module $H^0(\mathcal{X}, k\mathcal{L})$.

For any $k \geq 1$, let $\overline{H^0(\mathcal{X}, k\mathcal{L})}_{(L^2, k\phi)}$ (resp. $\overline{H^0(\mathcal{X}, k\mathcal{L})}_{(\sup, k\phi)}$) denote the normed $\mathbb{Z}$-module $H^0(\mathcal{X}, k\mathcal{L})$ equipped with the $L^2$-norm $\|\cdot\|_{(L^2, k\phi)}$ (resp. the sup-norm $\|\cdot\|_{\sup, k\phi}$).

  \medskip
  
We define the first minima with respect to the $L^2$-norm and the supremum norm as follows:
\[
\lambda_1(\mu, k\phi) := \lambda_1\left(\overline{H^0(\mathcal{X}, k\mathcal{L})}_{(L^2, k\phi)}\right) \quad \text{and} \quad \lambda_1(\sup, k\phi) := \lambda_1\left(\overline{H^0(\mathcal{X}, k\mathcal{L})}_{(\sup, k\phi)}\right),
\]
where $\lambda_1(\cdot)$ is the invariant defined in \eqref{deflambda1}. 

The \emph{asymptotic first minimum} of $\overline{\mathcal{L}}_\phi$ is defined as:
\[
\widehat{\mu}_{\max}(\overline{\mathcal{L}}_\phi) := -\lim_{k \to \infty} \frac{1}{k} \log \lambda_1\left(\overline{H^0(\mathcal{X}, k\mathcal{L})}_{(\sup, k\phi)}\right).
\]
It is well known that this limit exists and is finite. By the Bernstein--Markov property, the limit remains invariant if the supremum norm is replaced by the $L^2$-norm.

\subsubsection*{Arithmetic Positivity}

We recall the following notions of arithmetic positivity for Hermitian line bundles:

\begin{definition}\label{def:arith-pos1}
The following notions of arithmetic positivity will be used:

\begin{enumerate}
\item (Arithmetic Ampleness)
A continuous Hermitian line bundle $\overline{\mathcal{L}}_\phi$ is said to be \emph{ample} if:
\begin{itemize}
    \item $\mathcal{L}$ is an ample line bundle on $\mathcal{X}$;
    \item For sufficiently large $k$, the $\mathbb{Z}$-module $H^0(\mathcal{X}, k\mathcal{L})$ is generated by its small sections:
    \[ \{s \in H^0(\mathcal{X}, k\mathcal{L}) \mid \|s\|_{\sup, k\phi} < 1\}. \]

\end{itemize}

\item {\rm{(Arithmetic Bigness)}}
A continuous Hermitian line bundle $\overline{\mathcal{L}}_\phi$ is said to be \emph{big} if:
\begin{itemize}
    \item $\mathcal{L}_{\mathbb{Q}}$ is big on $\mathcal{X}_{\mathbb{Q}}$;
    \item $\widehat{\mathrm{vol}}(\overline{\LL}_\phi)>0$. \end{itemize}

\item (Arithmetic Nefness)
A Hermitian line bundle $\overline{\mathcal{L}}_\phi$ is \emph{nef} if for every $1$-dimensional closed subscheme $\Gamma$ in $\mathcal{X}$, the arithmetic degree satisfies:
\[ \widehat{\deg}(\overline{\mathcal{L}}|_{\Gamma}) \geq 0. \]

\end{enumerate}

\end{definition}

\begin{definition}\label{def:weak-nef}
A Hermitian line bundle $\overline{\mathcal{L}}_\phi$ is said to be {weakly nef} if $\overline{\mathcal{L}}_\phi + \overline{\mathcal{A}}_\psi$ is ample for every ample Hermitian line bundle $\overline{\mathcal{A}}_\psi$.
\end{definition}

It follows immediately that \begin{center}
\textit{Ample} $\implies$ \textit{Weakly nef} $\implies$ \textit{Nef}.
\end{center}
The first implication follows directly from the definitions, while the second is immediate from the positivity of arithmetic degrees.

\begin{proposition}\label{wnef}
Let $\overline{\mathcal{L}}_\phi$ be a nef Hermitian line bundle in the sense of Zhang \cite{Zha95}. Then $\overline{\mathcal{L}}_\phi$ is weakly nef.
\end{proposition}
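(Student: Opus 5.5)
We must show: if $\overline{\LL}_\phi$ is nef in Zhang's sense and $\overline{\mathcal A}_\psi$ is ample, then $\overline{\LL}_\phi+\overline{\mathcal A}_\psi$ is ample in the sense of Definition~\ref{def:arith-pos1}(1). Recall that Zhang-nef means: $\LL$ is relatively nef, the metric $\phi$ is semipositive (continuous psh, as a uniform limit of smooth semipositive metrics), and $\widehat{\deg}(\overline{\LL}|_\Gamma)\ge 0$ for all horizontal curves $\Gamma$. The plan is to reduce to Zhang's arithmetic Nakai--Moishezon theorem \cite{Zha95}. That theorem says the following. Suppose $\mathcal M$ is ample on the generic fibre and relatively nef, the metric on $\overline{\mathcal M}$ is semipositive, and $\widehat{\deg}(\overline{\mathcal M}|_{\mathcal Y})>0$ on every horizontal integral subvariety $\mathcal Y$. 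Then $H^0(\X,k\mathcal M)$ is generated by sections of sup-norm $<1$ for $k\gg0$.

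\textbf{Step 1: geometric hypotheses for $\mathcal M:=\LL+\mathcal A$.} Since $\mathcal A$ is ample and $\LL$ is relatively nef, $\mathcal M$ is ample on $\X$. Its metric $\phi+\psi$ is continuous and semipositive, because $\psi$ is semipositive for an ample Hermitian line bundle. Adding a small positive multiple of a smooth strictly positive reference form to $\psi$ is not needed, since ampleness of $\overline{\mathcal A}_\psi$ already provides strict positivity on heights.

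\textbf{Step 2: strict positivity of heights.} For a horizontal integral subvariety $\mathcal Y$ of dimension $d+1$, expand by multilinearity:
\[
h_{\overline{\mathcal M}}(\mathcal Y)=\sum_{j=0}^{d+1}\binom{d+1}{j}\,\widehat{c}_1(\overline{\LL}_\phi)^{j}\,\widehat{c}_1(\overline{\mathcal A}_\psi)^{d+1-j}\cdot[\mathcal Y].
\]
Every mixed term is $\ge0$, since intersection numbers of Zhang-nef (semipositive) Hermitian line bundles on horizontal cycles are nonnegative. The term $j=0$ is $h_{\overline{\mathcal A}_\psi}(\mathcal Y)>0$, by ampleness of $\overline{\mathcal A}_\psi$ (small sections together with the positivity of $\mathcal A$). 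Hence $h_{\overline{\mathcal M}}(\mathcal Y)>0$. For vertical subvarieties, positivity follows from ampleness of $\mathcal M$ on the fibres.

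\textbf{Step 3: conclude.} Apply Zhang's theorem (in the continuous-metric version, obtained by uniformly approximating $\phi+\psi$ by smooth semipositive metrics $\phi_m+\psi$ with $\phi_m\ge \phi$ up to $\varepsilon$, and absorbing $\varepsilon$ into the strict inequality using $\|s\|_{\sup}<1$). This gives generation of $H^0(\X,k\mathcal M)$ by strictly small sections for $k\gg0$, i.e.\ ampleness in the sense of Definition~\ref{def:arith-pos1}.

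The main obstacle is the approximation in Step 3. Zhang's statement is for smooth (or uniformly-approximable) metrics, and ampleness is not obviously stable under uniform perturbation. To handle this, first use that ampleness of $\overline{\mathcal A}_\psi$ gives a little room: $\overline{\mathcal A}_{\psi+\delta}$ remains ample for small $\delta>0$. Then perturb $\phi$ by less than $\delta$ to a smooth semipositive metric. The strict-inequality condition $\|s\|_{\sup}<1$ survives this perturbation uniformly after rescaling by $e^{-k\delta}$.
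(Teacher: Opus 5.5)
Your argument depends on the claim in Step~1 that ``$\psi$ is semipositive for an ample Hermitian line bundle.'' That claim is false for the notion of ampleness used in Definition~\ref{def:weak-nef}. Definition~\ref{def:arith-pos1}(1) only requires that $\mathcal A$ be ample and that $H^0(\X,k\mathcal A)$ be generated by sections with $\|s\|_{\sup,k\psi}<1$ for $k\gg0$. The weight $\psi$ is merely continuous and carries no curvature condition.

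For example, let $\overline{\mathcal A}_\psi$ be ample with $\psi$ smooth and strictly psh, and let $\chi\ge0$ be a nonconstant smooth bump function. Then $\|s\|_{\sup,k(\psi+M\chi)}\le\|s\|_{\sup,k\psi}$, so $\overline{\mathcal A}_{\psi+M\chi}$ is ample for every $M>0$. But $dd^c(\psi+M\chi)$ is not semipositive for $M\gg0$.

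Everything after Step~1 uses semipositivity of $\psi$:
\begin{itemize}
\item Zhang's Nakai--Moishezon theorem requires a semipositive metric on $\mathcal M$.
\item Nonnegativity of the mixed terms in Step~2 requires $\overline{\mathcal A}_\psi$ to be nef in Zhang's sense.
\item Deducing $h_{\overline{\mathcal A}_\psi}(\mathcal Y)>0$ from small sections uses $c_1(\overline{\mathcal A}_\psi)^{d}\ge0$ in the term $-\int_{\mathcal Y(\mathbb C)}\log\|s\|\,c_1(\overline{\mathcal A}_\psi)^{d}$.
\end{itemize}
In fact Step~2 is false as stated. Take $\overline{\LL}_\phi=\overline{\mathcal O}$ with the trivial metric on an arithmetic surface; this is nef in Zhang's sense, and $\overline{\mathcal M}=\overline{\mathcal A}_{\psi+M\chi}$. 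Up to positive normalising constants, the change of $h(\X)$ under $\psi\mapsto\psi+M\chi$ is $M\int\chi\,c_1(\overline{\mathcal A}_\psi)+M^2\int\chi\,dd^c\chi$. Since $\int\chi\,dd^c\chi=-\int d\chi\wedge d^c\chi<0$, this gives $h_{\overline{\mathcal M}}(\X)<0$ for $M\gg0$, even though $\overline{\mathcal M}$ is ample in the paper's sense.

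What is missing is the paper's device for making the ample metric psh without changing the relevant norms: replace $\psi$ by its envelope $P_{\X}\psi$ from \eqref{equiweight}.
\begin{itemize}
\item For $s\in H^0(\X,k\mathcal A)$ one has $\|s\|_{\sup,k\psi}=\|s\|_{\sup,kP_{\X}\psi}$, so $\overline{\mathcal A}_{P_{\X}\psi}$ is still ample, and its weight is continuous psh.
\item By Richberg/Demailly regularization, $P_{\X}\psi$ is a uniform limit of smooth strictly psh weights $\psi_j$. Since a finite generating set has maximal sup-norm $<1$, $\overline{\mathcal A}_{\psi_j}$ is ample in Zhang's sense for $j\gg0$.
\item At that point your Steps~1--2, with $\psi_j$ in place of $\psi$, are a legitimate justification of the Nakai--Moishezon step the paper invokes, and they give ampleness of $\overline{\LL}_\phi+\overline{\mathcal A}_{\psi_j}$.
\item To return to $\psi$, use $P_{\X}\psi\le\psi$, which gives $\|s\|_{\sup,\ell(\phi+\psi)}\le\|s\|_{\sup,\ell(\phi+P_{\X}\psi)}$ for all $s\in H^0(\X,\ell(\LL+\mathcal A))$. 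Combine this with $\|\psi_j-P_{\X}\psi\|_{\sup}\to0$ and a shift to $\psi-\varepsilon$ to keep the inequality strict.
\end{itemize}
So the perturbation that matters is of the ample metric, not of $\phi$ as in your Step~3. Note also that with the paper's convention $\|s\|_{\sup,k(\psi+\delta)}=e^{-k\delta}\|s\|_{\sup,k\psi}$, ampleness of $\overline{\mathcal A}_{\psi+\delta}$ is automatic. The room you actually need is ampleness of $\overline{\mathcal A}_{\psi-\delta}$.
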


\begin{proof}
Let $\overline{\mathcal{A}}_\psi$ be an ample Hermitian line bundle. We first recall that the equilibrium weight $P_{\mathcal{X}}\psi$, defined as in \eqref{equiweight}, is continuous. By the definition of ampleness, there exists a sufficiently large integer $k$ such that the $\mathbb{Z}$-module $H^0(\mathcal{X}, k\mathcal{A})$ is generated by the set $\{s \in H^0(\mathcal{X}, k\mathcal{A}) \mid \|s\|_{\sup, k\psi} < 1\}$. Since $\|s\|_{\sup, k\psi} = \|s\|_{\sup, kP_{\mathcal{X}}\psi}$, it follows that $\overline{\mathcal{A}}_{P_{\mathcal{X}}\psi}$ is also ample.

Using Richberg's theorem and the regularization techniques of Demailly \cite{DemaillyLivre}, we can construct a sequence $(\psi_j)_{j \in \mathbb{N}}$ of smooth, strictly plurisubharmonic weights on $\mathcal{A}$ such that $\|\psi_j - P_{\mathcal{X}}\psi\|_{\sup} \to 0$ as $j \to \infty$. 

Let $\alpha = \max \{ \|s\|_{\sup, k\psi} \mid s \in \widehat{H}^0(\overline{H^0(\mathcal{X}, k\mathcal{A})}_{\sup, k\psi}) \}$. For any $\epsilon \in (0, -\frac{1}{k}\log \alpha)$, there exists $N \in \mathbb{N}$ such that for all $j \geq N$ and all $s$ in the generating set, we have $\|s\|_{\sup, k\psi_j} \leq e^{k\epsilon} \alpha < 1$. Thus, $\overline{\mathcal{A}}_{\psi_j}$ is ample in the sense of Zhang for $j \geq N$. By the \emph{arithmetic Nakai--Moishezon Theorem} \cite{Zha95}, the sum $\overline{\mathcal{L}}_\phi + \overline{\mathcal{A}}_{\psi_j}$ is ample for all $j \geq N$.

To conclude, we compare the successive minima. For every $j \in \mathbb{N}$, we have the following Lipschitz-type estimate:

\[
\begin{aligned}
\Biggl| \limsup_{\ell \to \infty} & \frac{\log \lambda_{N_\ell}(\overline{H^0(\mathcal{X}, \ell(\mathcal{L}+\mathcal{A}))}_{\sup, \ell(\phi+\psi)})}{\ell} \\
&- \limsup_{\ell \to \infty} \frac{\log \lambda_{N_\ell}(\overline{H^0(\mathcal{X}, \ell(\mathcal{L}+\mathcal{A}))}_{\sup, \ell(\phi+\psi_j)})}{\ell} \Biggr| \leq \|\psi_j - P_{\mathcal{X}}\psi\|_{\sup},
\end{aligned}
\]

where $N_\ell = \mathrm{rank}\, H^0(\mathcal{X}, \ell(\mathcal{L}+\mathcal{A}))$. As $j \to \infty$, the right side vanishes. Since $\overline{\mathcal{L}}_\phi + \overline{\mathcal{A}}_{\psi_j}$ is ample, its $N_\ell$-th successive minimum is strictly less than $1$ for $\ell \gg 0$, implying:
\[
\limsup_{\ell \to \infty} \frac{1}{\ell} \log \lambda_{N_\ell}(\overline{H^0(\mathcal{X}, \ell(\mathcal{L}+\mathcal{A}))}_{\sup, \ell(\phi+\psi)}) \leq 0.
\]
By choosing a sufficiently small $\epsilon > 0$ such that $\overline{\mathcal{A}}_{\psi-\epsilon}$ remains ample, the same argument shows the limsup is strictly negative, which establishes that $\overline{\mathcal{L}}_\phi + \overline{\mathcal{A}}_\psi$ is generated by small sections and is thus ample.
\end{proof}

\begin{definition}\label{defTheta}
Let $E \subseteq H^0(\mathcal{X}, \mathcal{L})$ be a non-zero $\mathbb{Z}$-submodule. For every $x \in \mathcal{X}(\mathbb{C})$, we define the \emph{arithmetic distortion function} associated with $(\mu, \phi)$ as:
\begin{equation}\label{ThetaDef}
\Theta(\mu, \overline{E})(x) := 2\pi \sum_{v \in E} \|v(x)\|_\phi^2 \frac{e^{-\pi \|v\|_{(L^2, \phi)}^2}}{\sum_{u \in E} e^{-\pi \|u\|_{(L^2, \phi)}^2}},
\end{equation}
and we denote 
\begin{equation}\label{BergmanDef}
\rho(\mu, \overline{E})(x) := \sup_{s \in E_{\mathbb{C}} \setminus \{0\}} \frac{\|s(x)\|_\phi^2}{\quad \|s\|_{(L^2, \phi)}^2},
\end{equation}
where $\overline{E}$ is endowed with the $L^2$-Hermitian structure induced by $(\mu, \phi)$. When $E = H^0(\mathcal{X}, \mathcal{L})$, we simplify the notation to $\Theta(\mu, \phi)(x)$ or $\Theta(\mu; \overline{\mathcal{L}}_\phi)(x)$.
\end{definition}

The function $\Theta(\mu,\overline E)$ can be viewed as the expectation of the pointwise norm with respect to the Gaussian probability measure induced by the $L^2$-norm on $E$.

\subsection{On the arithmetic theta functions, Part I}
In this section, we analyze the relationship between different formulations of the arithmetic volume, specifically focusing on the $\theta$-variants $h_\theta^0$. The following results ensure that our definition of $\widehat{\mathrm{vol}}(\overline{\mathcal{L}}_\phi)$ is robust with respect to the choice of $L^2$ or supremum norms.

\medskip

We maintain the notation established in Section \ref{sec4}. Recall the following definitions of the arithmetic volume functions:
\begin{equation}\label{defvol}
\begin{aligned}
\widehat{\mathrm{vol}}(\overline{{\mathcal{L}}}_\phi) &:= \limsup_{k \to \infty} \frac{\widehat{h}^0\left(\overline{H^0(\mathcal{X}, k\mathcal{L})}_{(\sup, k\phi)}\right)}{k^{n+1}/(n+1)!}, \\
\widehat{\mathrm{vol}}_{L^2}(\overline{{\mathcal{L}}}_\phi) &:= \limsup_{k \to \infty} \frac{\widehat{h}^0\left(\overline{H^0(\mathcal{X}, k\mathcal{L})}_{(L^2, k\phi)}\right)}{k^{n+1}/(n+1)!}.
\end{aligned}
\end{equation}

In analogy with the above, we introduce the following $\theta$-invariants for the arithmetic volume:
\begin{equation}
\begin{aligned}
\widehat{\mathrm{vol}}_{\sup, \theta}(\overline{{\mathcal{L}}}_\phi) &:= \limsup_{k \to \infty} \frac{h_\theta^0\left(\overline{H^0(\mathcal{X}, k\mathcal{L})}_{\sup, k\phi}\right)}{k^{n+1}/(n+1)!}, \\
\widehat{\mathrm{vol}}_{L^2, \theta}(\overline{{\mathcal{L}}}_\phi) &:= \limsup_{k \to \infty} \frac{h_\theta^0\left(\overline{H^0(\mathcal{X}, k\mathcal{L})}_{L^2, k\phi}\right)}{k^{n+1}/(n+1)!}.
\end{aligned}
\end{equation}

The following lemma describes the variation of the $h_\theta^0$ invariant with respect to the weight, providing an arithmetic analogue of the variation of the Bergman  function.

\begin{lemma}\label{difference} 
Let $\psi$ and $\phi$ be two continuous weights on $\mathcal{L}$. For all $k \in \mathbb{N}$, we have:
\[
\begin{split}
h_\theta^0\left(\overline{H^0(\mathcal{X}, k\mathcal{L})}_{(L^2, k\phi)}\right) - h_\theta^0&\left(\overline{H^0(\mathcal{X}, k\mathcal{L})}_{(\mu, k\psi)}\right) \\
&= k \int_X (\phi - \psi) \left( \int_0^1 \Theta(\mu, k\phi_t)(x) \, dt \right) {{\mu}},
\end{split}
\]
where $\phi_t = t\phi + (1-t)\psi$ for $t \in [0,1]$.
\end{lemma}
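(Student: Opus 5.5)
The plan is to apply the fundamental theorem of calculus to the function
\[
F(t) := h_\theta^0\left(\overline{H^0(\mathcal{X}, k\mathcal{L})}_{(L^2, k\phi_t)}\right) = \log \sum_{v \in E} e^{-\pi \|v\|_{(L^2,k\phi_t)}^2}, \qquad E := H^0(\mathcal{X},k\mathcal{L}),
\]
on $[0,1]$. Here $F(1)$ and $F(0)$ are the two theta invariants in the statement; the second one, written with $(\mu,k\psi)$, is just the $L^2$-norm for $k\psi$. The weight convention is $\|v(x)\|_{k\phi_t}^2 = |v(x)|^2 e^{-2k\phi_t(x)}$ in a local trivialization. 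With the opposite sign convention the factor $2$ produced by differentiation is absorbed differently, so I will fix the normalization consistently with the factor $2\pi$ in Definition \ref{defTheta}.

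\textbf{Step 1: differentiate the norm of a fixed section.} For fixed $v\in E$,
\[
\|v\|_{(L^2,k\phi_t)}^2=\int_X \|v(x)\|_{k\psi}^2\, e^{-2kt(\phi-\psi)(x)}\,\mu .
\]
Since $\phi-\psi$ is continuous on the compact manifold $X$, this is smooth in $t$, and differentiation under the integral gives
\[
\frac{d}{dt}\|v\|_{(L^2,k\phi_t)}^2 = -2k\int_X(\phi-\psi)\,\|v(x)\|_{k\phi_t}^2\,\mu .
\]

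\textbf{Step 2: differentiate $F$ term by term.} This gives
\[
F'(t)=\frac{\sum_v \pi\cdot 2k\int_X(\phi-\psi)\|v(x)\|^2_{k\phi_t}\,\mu\; e^{-\pi\|v\|^2}}{\sum_u e^{-\pi\|u\|^2}} .
\]
The pointwise sum over $v$ of the integrand is exactly $k(\phi-\psi)\,\Theta(\mu,k\phi_t)(x)$, up to the constant coming from the convention above. This uses Tonelli/Fubini to exchange the sum over $v$ with the integral over $X$, which is legitimate since all terms are nonnegative after splitting $\phi-\psi$ into its positive and negative parts.

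\textbf{Step 3: integrate.} Integrating $F'$ over $t\in[0,1]$ and using Fubini once more to move $\int_0^1$ inside $\int_X$ yields the stated formula.

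The main obstacle is justifying term-by-term differentiation of the infinite lattice series, which needs uniform convergence of the derivative series for $t\in[0,1]$. I would handle it as follows. The norms $\|\cdot\|_{k\phi_t}$ are uniformly equivalent to $\|\cdot\|_{k\psi}$, with constant $e^{2k\|\phi-\psi\|_{\sup}}$. Hence each term of the derivative series is bounded by
\[
C\,\|v\|_{k\psi}^2\, e^{-\pi c\|v\|_{k\psi}^2}, \qquad c>0,
\]
and such a series converges by Lemma \ref{lemma2}, or simply by comparison with a Gaussian sum over a lattice. The Weierstrass M-test then gives uniform convergence. This makes $F$ of class $C^1$, and the fundamental theorem of calculus applies.
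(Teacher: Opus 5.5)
Your proof is correct and is essentially the argument the paper relies on. The paper only cites \cite[Proposition~4.6]{HajliTAMS} here, but the same computation (differentiate $\log\theta$ along a path of weights, then integrate in $t$) appears explicitly in its proof of Theorem~\ref{Representation}. With the paper's convention $\|s\|_\phi=|s|e^{-\phi}$ (cf.\ $\|s\|_{\sup,k(\phi+t)}=e^{-kt}\|s\|_{\sup,k\phi}$), your Step~2 gives exactly $F'(t)=k\int_X(\phi-\psi)\,\Theta(\mu,k\phi_t)\,\mu$: the factor $2$ from differentiating $e^{-2kt(\phi-\psi)}$ combines with $\pi$ into the $2\pi$ of Definition~\ref{defTheta}, so no hedging about constants is needed (with $|s|^2e^{-\phi}$ instead, the identity would be off by a factor $2$).
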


\begin{proof}
See \cite[Proposition 4.6]{HajliTAMS}.
\end{proof}

\begin{proposition}
Let $\mu$ be a normalized smooth volume form on $\mathcal{X}(\mathbb{C})$ and $\overline{\mathcal{L}}_\phi$ a continuous Hermitian line bundle on $\mathcal{X}$. Then:
\begin{equation}\label{volthetal2sup}
\widehat{\mathrm{vol}}_{\sup, \theta}(\overline{{\mathcal{L}}}_\phi) = \widehat{\mathrm{vol}}_{L^2, \theta}(\overline{{\mathcal{L}}}_\phi).
\end{equation}
\end{proposition}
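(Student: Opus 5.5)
We will prove the two inequalities separately by comparing the norms $\|\cdot\|_{(L^2,k\phi)}$ and $\|\cdot\|_{\sup,k\phi}$, using only that the theta invariant is monotone in the norm and changes in a controlled way under rescaling.

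\emph{Easy direction.} Since $\mu$ is normalized, $\|s\|_{(L^2,k\phi)}\le \|s\|_{\sup,k\phi}$ for every section $s$. Hence each term $e^{-\pi\|s\|^2_{\sup,k\phi}}$ is bounded by $e^{-\pi\|s\|^2_{(L^2,k\phi)}}$, and summing over $H^0(\mathcal X,k\mathcal L)$ gives
\[
h^0_\theta\bigl(\overline{H^0(\mathcal X,k\mathcal L)}_{\sup,k\phi}\bigr)\le h^0_\theta\bigl(\overline{H^0(\mathcal X,k\mathcal L)}_{(L^2,k\phi)}\bigr).
\]
Dividing by $k^{n+1}/(n+1)!$ and taking $\limsup$ yields $\widehat{\mathrm{vol}}_{\sup,\theta}\le \widehat{\mathrm{vol}}_{L^2,\theta}$.

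\emph{Reverse direction.} Fix $\varepsilon>0$. By the Bernstein--Markov property (valid because $\mu$ is smooth and $\phi$ continuous), there is $C_\varepsilon$ with $\|s\|_{\sup,k\phi}\le C_\varepsilon e^{k\varepsilon}\|s\|_{(L^2,k\phi)}$ for all $k$ and $s$. Since $\|s\|_{\sup,k(\phi+\varepsilon)}=e^{-k\varepsilon}\|s\|_{\sup,k\phi}$, this gives
\[
\|s\|_{\sup,k(\phi+\varepsilon)}\le C_\varepsilon\|s\|_{(L^2,k\phi)} .
\]
By monotonicity,
\[
h^0_\theta\bigl((L^2,k\phi)\bigr)\le h^0_\theta\bigl(H^0,\ C_\varepsilon\|\cdot\|_{\sup,k(\phi+\varepsilon)}\bigr).
\]
Two steps then remain.

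First, remove the constant $C_\varepsilon$. We need $h^0_\theta(\overline E, c\|\cdot\|)\ge h^0_\theta(\overline E,\|\cdot\|)-N\log c$ for $c\ge1$ and $N=\mathrm{rank}\,E$. For Euclidean norms this is the monotonicity of $t\mapsto \log\theta(t)+\tfrac N2\log t$ already used in Proposition~\ref{ineq1}. The sup norm is not Euclidean, so we sandwich it instead: the John ellipsoid gives a Euclidean norm within a factor $\sqrt N$ of it. Alternatively, pass through $\widehat h^0$ using~\eqref{ArTheta}, together with the elementary counting bound $\widehat h^0(c\|\cdot\|)\ge \widehat h^0(\|\cdot\|)-N\log(2c+1)$, obtained by covering a ball of radius $c$ with translates of the unit ball. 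Either route costs $O(N_k\log N_k+N_k)=O(k^n\log k)=o(k^{n+1})$. Hence
\[
\widehat{\mathrm{vol}}_{L^2,\theta}(\overline{\mathcal L}_\phi)\le \widehat{\mathrm{vol}}_{\sup,\theta}(\overline{\mathcal L}_{\phi+\varepsilon}).
\]

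Second, compare the weights $\phi+\varepsilon$ and $\phi$. Rescaling all norms by $e^{-k\varepsilon}$ changes $h^0_\theta$ by at most $N_k k\varepsilon+O(N_k\log N_k)$, by the same rescaling estimate. Since $N_k=O(k^n)$, this gives
\[
\widehat{\mathrm{vol}}_{\sup,\theta}(\overline{\mathcal L}_{\phi+\varepsilon})\le \widehat{\mathrm{vol}}_{\sup,\theta}(\overline{\mathcal L}_\phi)+C\varepsilon .
\]
Letting $\varepsilon\to0$ gives the reverse inequality and hence~\eqref{volthetal2sup}.

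The main obstacle is the rescaling estimate for the non-Euclidean sup norm, with an error that is $o(k^{n+1})$ uniformly in $k$. We would handle it via~\eqref{ArTheta} and lattice-point counting, as above, rather than through Poisson summation, which requires a Euclidean norm.
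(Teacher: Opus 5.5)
Your proof is correct, apart from a small slip noted at the end, but it handles the reverse inequality by a different route than the paper.

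\textbf{Comparison with the paper.} The paper sandwiches the sup norm between two Euclidean norms, $\|\cdot\|_{(L^2,k\phi)}\le\|\cdot\|_{\sup,k\phi}\le C e^{k\varepsilon}\|\cdot\|_{(L^2,k\phi)}$. The right-hand side is the $L^2$ norm for a weight differing from $\phi$ by a constant of size $\varepsilon+\frac1k\log C$. This squeezes $h^0_\theta$ of the sup lattice between two Euclidean theta invariants. Their difference is bounded by $O(\max(\varepsilon,k^{-1}))\,k^{n+1}$ using Lemma~\ref{difference}, $\Theta\le\rho$ and $\int_X\rho\,\mu=N_k$.

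You instead move the weight shift onto the sup side. This forces you to prove a rescaling estimate for a non-Euclidean norm (John ellipsoid or lattice-point counting), and to use it twice. That obstacle is self-imposed. The Bernstein--Markov bound you write down already dominates the sup norm by a rescaled Euclidean norm. Hence $h^0_\theta(\overline{H^0(\mathcal X,k\mathcal L)}_{\sup,k\phi})\ge h^0_\theta(\overline{H^0(\mathcal X,k\mathcal L)}_{(L^2,k\phi)})-N_k(k\varepsilon+\log C_\varepsilon)$ follows in one line from the monotonicity of $t\mapsto\log\theta(t)+\frac N2\log t$, with no $O(N_k\log N_k)$ loss.

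What your route buys is a rescaling lemma valid for arbitrary norms. As a byproduct it gives the estimate $0\le\widehat{\mathrm{vol}}_{\sup,\theta}(\overline{\mathcal L}_{\phi+\varepsilon})-\widehat{\mathrm{vol}}_{\sup,\theta}(\overline{\mathcal L}_\phi)\le C\varepsilon$ directly on the sup side. The paper's route is shorter and stays inside the Euclidean variational framework of Lemma~\ref{difference}.

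\textbf{The slip.} From $\|s\|_{\sup,k(\phi+\varepsilon)}\le C_\varepsilon\|s\|_{(L^2,k\phi)}$, monotonicity gives $h^0_\theta\bigl(\overline{H^0(\mathcal X,k\mathcal L)}_{(L^2,k\phi)}\bigr)\le h^0_\theta\bigl(H^0(\mathcal X,k\mathcal L),\,C_\varepsilon^{-1}\|\cdot\|_{\sup,k(\phi+\varepsilon)}\bigr)$. The constant is $C_\varepsilon^{-1}$, not $C_\varepsilon$. As you wrote it, removing the constant would be trivial by monotonicity and your rescaling lemma would be unnecessary. With the constant on the correct side, your lemma applied with $c=C_\varepsilon$ is exactly what is needed.

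\textbf{A caveat on your second route.} \eqref{ArTheta} is Bost's result for Euclidean lattices, so applying it to the sup-norm lattice requires its analogue for arbitrary norms. The upper bound $\widehat h^0\le h^0_\theta+\pi$ is trivial for any norm. The lower bound follows from the covering estimate $\#(E\cap rB)\le(2r+1)^N\#(E\cap B)$ applied to the shells $\{j\le\|v\|<j+1\}$, at the cost of an $O(N\log N)$ loss. Your John ellipsoid route is fine as stated.
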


\begin{proof}
We compare the invariants $h_\theta^0$ under the supremum and $L^2$ norms. Since $\mu$ is a smooth volume form and $\phi$ is continuous, the Bernstein--Markov property implies that for every $\varepsilon > 0$, there exists a constant $C > 0$ such that for sufficiently large $k$:
\[
\|\cdot\|_{(L^2, k\phi)} \leq \|\cdot\|_{\sup, k\phi} \leq C e^{k\varepsilon} \|\cdot\|_{(L^2, k\phi)}.
\]
This comparison leads to the following inequalities:
\[
h_\theta^0\left(\overline{H^0(\mathcal{X}, k\mathcal{L})}_{(L^2, k\psi)}\right) \leq h_\theta^0\left(\overline{H^0(\mathcal{X}, k\mathcal{L})}_{\sup, k\phi}\right) \leq h_\theta^0\left(\overline{H^0(\mathcal{X}, k\mathcal{L})}_{(L^2, k\phi)}\right),
\]
where we set $\psi = \phi + \varepsilon - \frac{1}{k}\log C$. Applying Lemma \ref{difference} and observing that $\Theta(\mu, k\phi) \leq \rho(\mu, k\phi)$, we evaluate the difference:
\[
h_\theta^0\left(\overline{H^0(\mathcal{X}, k\mathcal{L})}_{(\mu, k\psi)}\right) = h_\theta^0\left(\overline{H^0(\mathcal{X}, k\mathcal{L})}_{(\mu, k\phi)}\right) + O\left(\max(\varepsilon, k^{-1})\right) k^{n+1}.
\]
Dividing the expressions by $k^{n+1}/(n+1)!$ and taking the limit as $k \to \infty$ followed by $\varepsilon \to 0$, we obtain the identity \eqref{volthetal2sup}.
\end{proof}


\begin{theorem}\label{MorMor}
Let \(\mu\) be a normalized smooth volume form on \(\mathcal{X}(\mathbb{C})\), and let \(\overline{\mathcal{L}}_\phi\) be a continuous Hermitian line bundle on \(\mathcal{X}\). Then:
\[
\widehat{\mathrm{vol}}_{L^2}(\overline{\mathcal{L}}_\phi)
= \widehat{\mathrm{vol}}_{L^2, \theta}(\overline{\mathcal{L}}_\phi)
= \widehat{\mathrm{vol}}(\overline{\mathcal{L}}_\phi)
= \widehat{\mathrm{vol}}_{\sup, \theta}(\overline{\mathcal{L}}_\phi).
\]
\end{theorem}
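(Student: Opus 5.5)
We must prove four quantities are equal. The proposition just proved gives $\widehat{\mathrm{vol}}_{\sup,\theta}=\widehat{\mathrm{vol}}_{L^2,\theta}$, so it remains to relate each $\theta$-invariant to the corresponding lattice-point count. The plan is to use Bost's comparison \eqref{ArTheta} lattice by lattice: for a normed $\mathbb{Z}$-module of rank $N$,
\[
h_\theta^0(\overline{E}) - \tfrac{N}{2}\log N + \log\bigl(1-\tfrac{1}{2\pi}\bigr) \le \widehat h^0(\overline{E}) \le h_\theta^0(\overline{E}) + \pi .
\]
We apply this with $E=H^0(\mathcal X,k\mathcal L)$, so $N=N_k=O(k^n)$. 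The error term $\tfrac{N_k}{2}\log N_k = O(k^n\log k)$ is $o(k^{n+1})$. Dividing by $k^{n+1}/(n+1)!$ and passing to the $\limsup$ therefore gives $\widehat{\mathrm{vol}}_{L^2}=\widehat{\mathrm{vol}}_{L^2,\theta}$ directly, since the $L^2$-norm is Euclidean.

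For the sup-norm, the inequality \eqref{ArTheta} is stated in \cite{BostTheta} for Euclidean lattices, and $\|\cdot\|_{\sup,k\phi}$ is not Hermitian. I would therefore avoid applying it directly and instead sandwich the sup-norm between $L^2$-norms, as in the previous proof. The Bernstein--Markov property gives, for each $\varepsilon>0$ and $k$ large,
\[
\|\cdot\|_{(L^2,k\phi)}\le\|\cdot\|_{\sup,k\phi}\le Ce^{k\varepsilon}\|\cdot\|_{(L^2,k\phi)} = \|\cdot\|_{(L^2,k\psi_k)},
\qquad \psi_k=\phi+\varepsilon+\tfrac1k\log C .
\]
Monotonicity of $\widehat h^0$ in the norm then yields
\[
\widehat h^0\bigl(\overline{H^0}_{(L^2,k\psi_k)}\bigr)\le \widehat h^0\bigl(\overline{H^0}_{(\sup,k\phi)}\bigr)\le \widehat h^0\bigl(\overline{H^0}_{(L^2,k\phi)}\bigr).
\]
We convert both outer terms to $h_\theta^0$ via \eqref{ArTheta}, with $o(k^{n+1})$ error. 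Lemma \ref{difference} shows that the weight shift costs at most
\[
k\,(\varepsilon+\tfrac1k\log C)\int_X\int_0^1\Theta(\mu,k\phi_t)\,dt\,\mu .
\]
Using $\Theta\le\rho$ (inequality \eqref{eq:theta-leq-rho}) and $\int_X\rho\,\mu=N_k$ from \eqref{intRho}, this is $O\bigl((\varepsilon+k^{-1})k^{n+1}\bigr)$. Letting $k\to\infty$ and then $\varepsilon\to0$ gives $\widehat{\mathrm{vol}}=\widehat{\mathrm{vol}}_{L^2,\theta}$. Combined with \eqref{volthetal2sup}, this chain yields all four equalities.

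The main obstacle is the sup-norm term. One must make sure that Lemma \ref{difference} applies with the $k$-dependent constant shift $\psi_k$, and that the bound $\int\Theta\,\mu\le N_k=O(k^n)$ holds uniformly along the segment $\phi_t$. The latter is automatic because each $\phi_t$ differs from $\phi$ by a constant, so $\rho(\mu,k\phi_t)\mu$ still integrates to $N_k$. Everything else is the elementary estimate $N_k\log N_k=o(k^{n+1})$.
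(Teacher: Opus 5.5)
Your proposal is correct and follows essentially the paper's proof: Bost's comparison \eqref{ArTheta} on the $L^2$ lattices, the Bernstein--Markov sandwich of the sup-norm between two $L^2$-norms, and Lemma~\ref{difference} combined with $\Theta\le\rho$ and \eqref{intRho} to show that the constant weight shift costs only $O\bigl((\varepsilon+k^{-1})k^{n+1}\bigr)$. Your upper bound via $\|\cdot\|_{(L^2,k\phi)}\le\|\cdot\|_{\sup,k\phi}$ is a cleaner replacement for the paper's ``symmetric argument'', and the only slip is a sign: with the paper's convention $\|s\|_{\sup,k(\phi+t)}=e^{-kt}\|s\|_{\sup,k\phi}$, the identity $Ce^{k\varepsilon}\|\cdot\|_{(L^2,k\phi)}=\|\cdot\|_{(L^2,k\psi_k)}$ requires $\psi_k=\phi-\varepsilon-\tfrac1k\log C$, which is harmless because only $|\phi-\psi_k|$ enters the estimate.
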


Theorem \ref{MorMor} shows that the arithmetic volume can be entirely recovered from the asymptotics of the  theta invariants $h_\theta^0$. 

\begin{proof}[Proof of Theorem \ref{MorMor}]
From \eqref{ArTheta}, it follows directly that:
\begin{equation}\label{cont}
\widehat{\mathrm{vol}}_{L^2, \theta}(\overline{\mathcal{L}}_\phi) = \widehat{\mathrm{vol}}_{L^2}(\overline{\mathcal{L}}_\phi).
\end{equation}

Let \(\epsilon \in \mathbb{R}\). We apply Lemma \ref{difference} to obtain the following identity:
\[
h_\theta^0\bigl(\overline{H^0(\mathcal{X}, k\mathcal{L})}_{(L^2, k\phi)}\bigr) - h_\theta^0\bigl(\overline{H^0(\mathcal{X}, k\mathcal{L})}_{(\mu, k(\phi-\epsilon))}\bigr) = \epsilon k \int_X \int_0^1 \Theta(x, k\phi_t) \, {{\mu}} \, dt, \quad \forall k \in \mathbb{N},
\]
where \(\phi_t = t\phi + (1-t)(\phi-\epsilon) = \phi - (1-t)\epsilon\) for \(t \in [0,1]\). Utilizing the bound \(\Theta(\mu, \phi) \leq \rho(\mu, \phi)\) and \eqref{intRho}, we deduce:
\[
\Bigl| h_\theta^0\bigl(\overline{H^0(\mathcal{X}, k\mathcal{L})}_{(L^2, k\phi)}\bigr) - h_\theta^0\bigl(\overline{H^0(\mathcal{X}, k\mathcal{L})}_{(\mu, k(\phi-\epsilon))}\bigr) \Bigr| \leq \epsilon k N_k.
\]
Dividing by \(k^{n+1}/(n+1)!\) and taking the limit as \(k \to \infty\), it follows that:
\[
\widehat{\mathrm{vol}}_{L^2, \theta}(\overline{\mathcal{L}}_\phi) - \widehat{\mathrm{vol}}_{L^2, \theta}(\overline{\mathcal{L}}_{\phi-\epsilon}) = O(\epsilon).
\]
In view of \eqref{cont}, the preceding estimate implies:
\[
\widehat{\mathrm{vol}}_{L^2}(\overline{\mathcal{L}}_\phi) - \widehat{\mathrm{vol}}_{L^2}(\overline{\mathcal{L}}_{\phi-\epsilon}) = O(\epsilon).
\]

Since \(\mu\) is smooth and the metrics are continuous, there exists a constant \(C > 0\) independent of \(k\) such that for sufficiently large \(k\):
\[
\|s\|_{\sup, k\phi} \leq C e^{k\epsilon} \|s\|_{\mu, k\phi} = \|s\|_{\mu, k\psi}, \quad \forall s \in H^0(\mathcal{X}, k\mathcal{L}),
\]
where we have set  \(\psi := \phi + \epsilon - \frac{1}{k}\log C\). Thus, we have:
\begin{equation*}
\begin{aligned}
\widehat{h}^0\bigl(\overline{H^0(\mathcal{X}, k\mathcal{L})}_{(\sup, k\phi)}\bigr) &\geq \widehat{h}^0\bigl(\overline{H^0(\mathcal{X}, k\mathcal{L})}_{(L^2, k\psi)}\bigr) \\
&\geq h_\theta^0\bigl(\overline{H^0(\mathcal{X}, k\mathcal{L})}_{(L^2, k\psi)}\bigr) + O(k^n \log k) \quad (\text{by } \eqref{ArTheta}) \\
&= h_\theta^0\bigl(\overline{H^0(\mathcal{X}, k\mathcal{L})}_{(L^2, k\phi)}\bigr) + O\bigl(\max(\epsilon, k^{-1})\bigr)k^{n+1} + O(k^n \log k).
\end{aligned}
\end{equation*}
This yields:
\[
\widehat{\mathrm{vol}}(\overline{\mathcal{L}}_\phi) \geq \widehat{\mathrm{vol}}_{L^2, \theta}(\overline{\mathcal{L}}_\phi) + O(\epsilon),
\]
which implies \(\widehat{\mathrm{vol}}(\overline{\mathcal{L}}_\phi) \geq \widehat{\mathrm{vol}}_{L^2, \theta}(\overline{\mathcal{L}}_\phi)\) by letting \(\epsilon \to 0\). Conversely, by a symmetric argument using the standard comparison of norms, we show:
\[
\widehat{\mathrm{vol}}_{L^2}(\overline{\mathcal{L}}_\phi) - \widehat{\mathrm{vol}}(\overline{\mathcal{L}}_\phi) = O(\epsilon).
\]
Taking \(\epsilon \to 0\), we conclude:
\[
\widehat{\mathrm{vol}}(\overline{\mathcal{L}}_\phi) = \widehat{\mathrm{vol}}_{L^2}(\overline{\mathcal{L}}_\phi).
\]
In particular, we obtain the continuity property:
\begin{equation}\label{continuitytrivial}
\widehat{\mathrm{vol}}(\overline{\mathcal{L}}_\phi) - \widehat{\mathrm{vol}}(\overline{\mathcal{L}}_{\phi-\epsilon}) = O(\epsilon).
\end{equation}
\end{proof}


\subsection{On the arithmetic theta functions, Part II}

We maintain the notation introduced in Section~\ref{sec4}. 
This subsection is devoted to the study of the asymptotic expansion of 
$\Theta(\mu, k\phi)$ and to the arithmetic measures naturally arising from it.

\medskip

Let $t>0$. Consider the $\mathbb{Z}$-submodule generated by the small sections
\[
E_{k,t}
:=
\left\langle 
\widehat{H}^0\!\left(
\overline{H^0(\mathcal{X}, k\mathcal{L})}_{(L^2, k(\phi-t))}
\right)
\right\rangle
=
\left\langle
\left\{
s \in H^0(\mathcal{X}, k\mathcal{L})
\;\middle|\;
\|s\|_{L^2,k\phi} \le e^{-kt}
\right\}
\right\rangle .
\]
In other words, $E_{k,t}$ is the $\mathbb{Z}$-span of the set of small sections
$\widehat{H}^0(\mathcal{X}, k\mathcal{L})_{L^2, k(\phi-t)}$. 
We denote by $\overline{E}_{k,t}$ the Euclidean lattice $E_{k,t}$ endowed with the norm induced from 
$\overline{H^0(\mathcal{X}, k\mathcal{L})}_{(L^2, k(\phi-t))}$.

It follows immediately from the definition of the distortion function that
\begin{equation}\label{y1}
\int_{\mathcal{X}(\mathbb{C})}
\rho(\mu, \overline{E}_{k,t})(x)
\, \mu
=
\dim_{\mathbb{C}} \bigl( (E_{k,t})_{\mathbb{C}} \bigr).
\end{equation}

By \cite[Proposition~3.4]{YuanZhang2}, we therefore obtain
\[
\limsup_{k \to \infty}
\int_{\mathcal{X}(\mathbb{C})}
\frac{\rho(\mu, \overline{E}_{k,t})(x)}{k^n/n!}
\, \mu
=
\mathrm{dvol}(\overline{\mathcal{L}}_t).
\]

Using Proposition~\ref{ineq1}  to compare  $\Theta$ and $\rho$ established, one deduces that
\[
\limsup_{k \to \infty}
\int_{\mathcal{X}(\mathbb{C})}
\frac{\Theta(\mu, \overline{E}_{k,t})(x)}{k^n/n!}
\, \mu
=
\mathrm{dvol}(\overline{\mathcal{L}}_t).
\]

\medskip

These asymptotic identities naturally lead to the following definitions.

\begin{definition}
Let $\mathcal{X}$ be a smooth arithmetic variety over $\mathrm{Spec}(\mathbb{Z})$. 
Let $\overline{\mathcal{L}}_\phi$ be a continuous Hermitian line bundle on $\mathcal{X}$, and let $\mu$ be a smooth volume form on $\mathcal{X}(\mathbb{C})$.

Let $\overline{E}_\bullet = \bigoplus_{k \in \mathbb{N}} \overline{E}_k$, 
where each $\overline{E}_k = (E_k, \|\cdot\|_{L^2,k\phi})$ and 
$E_k \subset H^0(\mathcal{X}, k\mathcal{L})$ is a normed $\mathbb{Z}$-submodule (see Section~\ref{SectionTheta}). 

We define the upper arithmetic equilibrium measures by
\[
\widehat{\mu}^{\sup}_{\mathrm{eq}}((\mathcal{X},\mu), \overline{E}_\bullet)
:=
\limsup_{k \to \infty}
\frac{1}{k^n}
\Theta(\mu, \overline{E}_k)\, \mu,
\]
and
\[
\widehat{\omega}^{\sup}_{\mathrm{eq}}((\mathcal{X},\mu), \overline{E}_\bullet)
:=
\limsup_{k \to \infty}
\frac{1}{k^n}
\rho(\mu, \overline{E}_k)\, \mu.
\]

When
\[
\overline{E}_\bullet
=
\bigoplus_{k \in \mathbb{N}}
\overline{H^0(\mathcal{X}, k\mathcal{L})}_{L^2, k\phi},
\]
we simply write
\[
\widehat{\mu}^{\sup}_{\mathrm{eq}}((\mathcal{X},\mu), \overline{\mathcal{L}}_\phi)
\quad \text{and} \quad
\widehat{\omega}^{\sup}_{\mathrm{eq}}((\mathcal{X},\mu), \overline{\mathcal{L}}_\phi).
\]

The lower measures
$\widehat{\mu}^{\inf}_{\mathrm{eq}}(\cdot)$ and 
$\widehat{\omega}^{\inf}_{\mathrm{eq}}(\cdot)$
are defined analogously by replacing $\limsup$ with $\liminf$. 
\end{definition}

The invariant $\widehat{\mu}^{\sup}_{\mathrm{eq}}((\mathcal{X},\mu), \overline{\mathcal{L}}_\phi)$ provides a refined arithmetic perspective compared to the mere set of small sections.  Indeed, while the arithmetic volume $\widehat{\mathrm{vol}}(\overline{\mathcal{L}}_\phi)$ captures the global arithmetic magnitude of a Hermitian line bundle, $\widehat{\mu}^{\sup}_{\mathrm{eq}}((\mathcal{X},\mu), \overline{\mathcal{L}}_\phi)$ is expected to reflect finer local arithmetic positivity properties. In this sense, it serves as an arithmetic analogue of the equilibrium measure, characterizing the arithmetic positivity of Hermitian line bundles.

\medskip

The following proposition establishes an arithmetic counterpart to the classical identity for the Bergman kernel. It demonstrates that as the scaling parameter $s$ tends toward negative infinity, the integral of the arithmetic distortion function recovers the rank of the section module.
\begin{proposition} 
Let $\overline{\mathcal{L}}_\phi$ be a continuous Hermitian line bundle on $\mathcal{X}$, and let $\mu$ be a smooth volume form on $\mathcal{X}(\mathbb{C})$. For every $k \in \mathbb{N}$, we have:
\[
\lim_{s\rightarrow -\infty}\int_X \Theta(\mu,k(\phi-s))(x) {{\mu}} = \dim H^0(\mathcal{X}_\mathbb{Q}, k\mathcal{L}_\mathbb{Q}).
\]
\end{proposition}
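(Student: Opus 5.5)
Fix $k$ and write $E=H^0(\mathcal X,k\mathcal L)$, a lattice of rank $N=N_k=\dim H^0(\mathcal X_\mathbb Q,k\mathcal L_\mathbb Q)$. Replacing the weight $k\phi$ by $k(\phi-s)$ multiplies every pointwise norm by $e^{ks}$, so $\|v(x)\|^2_{k(\phi-s)}=e^{2ks}\|v(x)\|^2_{k\phi}$ and $\|v\|^2_{(L^2,k(\phi-s))}=e^{2ks}\|v\|^2_{(L^2,k\phi)}$. The plan is to integrate the definition \eqref{ThetaDef} over $X$ term by term. Since $\int_X\|v(x)\|^2\mu=\|v\|^2_{L^2}$, this gives the lattice identity
\[
\int_X\Theta(\mu,k(\phi-s))\,\mu
=2\pi\,\frac{\sum_{v\in E}\alpha\|v\|^2 e^{-\pi\alpha\|v\|^2}}{\sum_{v\in E}e^{-\pi\alpha\|v\|^2}},
\qquad \alpha:=e^{2ks},
\]
where $\|\cdot\|=\|\cdot\|_{(L^2,k\phi)}$. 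The interchange of sum and integral is justified by Tonelli, since all terms are nonnegative. So the statement reduces to a purely lattice-theoretic claim: for a Euclidean lattice $\overline E$ of rank $N$, the quantity $2\pi\alpha\,\theta'$-type ratio above tends to $N$ as $\alpha\to0^+$, i.e. as $s\to-\infty$.

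To prove this I will use the identity \eqref{811} obtained from Poisson summation in the proof of Lemma~\ref{lemma2}, with $t=\alpha$:
\[
2\pi\alpha\frac{\sum_v\|v\|^2e^{-\pi\alpha\|v\|^2}}{\sum_u e^{-\pi\alpha\|u\|^2}}
=N-\frac{2\pi}{\alpha}\frac{\sum_{v^\vee}\|v^\vee\|^2e^{-\frac{\pi}{\alpha}\|v^\vee\|^2}}{\sum_{u^\vee}e^{-\frac{\pi}{\alpha}\|u^\vee\|^2}} .
\]
It then remains to show that the dual correction term tends to $0$ as $\alpha\to0$. Put $T=1/\alpha\to\infty$. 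The denominator is at least $1$, because of the $u^\vee=0$ term. For the numerator, the zero vector contributes nothing, and every nonzero $v^\vee$ satisfies $\|v^\vee\|\ge\lambda_1(\overline E^\vee)=:\lambda>0$.

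This tail estimate is the only real step. The cleanest approach is to note that $\sum_{v^\vee\ne0}\|v^\vee\|^2e^{-\pi T\|v^\vee\|^2}\le e^{-\pi(T-1)\lambda^2}\sum_{v^\vee}\|v^\vee\|^2e^{-\pi\|v^\vee\|^2}$ for $T\ge1$. The last sum is a finite constant depending only on the lattice, being finite by Lemma~\ref{lemma2} with $t=1$ applied to $\overline E^\vee$. Multiplying by $2\pi T$, the correction is $O(Te^{-\pi T\lambda^2})\to0$. Alternatively, Proposition~\ref{ineq1} applied to $\overline E^\vee$ with $r=\lambda$ and $\alpha=T$ large gives the same exponential decay. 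Hence the integral tends to $N=\dim H^0(\mathcal X_\mathbb Q,k\mathcal L_\mathbb Q)$.

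One small point to check: if $E$ has torsion, its torsion elements have zero norm and vanish on $X$. They therefore multiply numerator and denominator by the same factor $\#E_{\mathrm{tors}}$, so one may work with $E_{\mathrm{free}}$. The rank of $E$ equals $\dim H^0(\mathcal X_\mathbb Q,k\mathcal L_\mathbb Q)$ by flat base change.
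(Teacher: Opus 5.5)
Your proposal is correct and follows the paper's argument: you integrate $\Theta$ over $X$ to reduce to the lattice quantity at parameter $t=e^{2ks}\to 0^+$, apply the Poisson log-derivative identity \eqref{811}, and kill the dual term using $\lambda_1(\overline E^\vee)>0$ together with the fact that the denominator is at least $1$. The only cosmetic difference is that the paper isolates the $\nu$ dual vectors of minimal norm to get an explicit leading term $\frac{\nu}{t}\lambda^2e^{-\pi\lambda^2/t}$, whereas you bound all nonzero dual vectors at once, which suffices (as in the paper, one tacitly needs $k\ge1$ so that $e^{2ks}\to0$).
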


\begin{proof}
Let $\overline{E}$ be a Euclidean lattice. From the Poisson summation identity \eqref{poisson}, we infer the following relation:
\begin{equation}\label{811}
2\pi \sum_{v\in E} t \|v\|^2_{\overline E} \frac{e^{-\pi t\|v\|^2}}{\sum_{u\in E}  e^{-\pi t\|u\|_{\overline E}^2 }} =
\mathrm{rank}(E) - 
\frac{2\pi}{t} \sum_{v^\vee\in E^\vee} \|v^\vee\|^2_{\overline{E}^\vee} \frac{e^{-\frac{\pi}{ t}\|v^\vee\|^2}}{\sum_{u \in E^\vee}  e^{-\frac{\pi}{ t}\|u^{\vee}\|_{\overline{E}^\vee}^2 }},\quad \forall t>0.
\end{equation}

Let $\lambda = \lambda_1(\overline{E}^\vee)$ be the first minimum of the dual lattice, and let $\nu$ denote the number of elements in $E^\vee$ with norm equal to $\lambda$. For every $0 < t < 1$, we observe:
\[
\begin{aligned}
\frac{1}{t}\sum_{v^\vee\in E^\vee} \|v^\vee\|^2_{\overline{E}^\vee} \frac{e^{-\frac{\pi}{ t}\|v^\vee\|^2}}{\sum_{u\in E^\vee}  e^{-\frac{\pi}{ t}\|u^{\vee}\|_{\overline{E}^\vee}^2 }} &= \frac{\nu}{t} \lambda^2 e^{-\frac{\pi}{t}\lambda^2} + \frac{1}{t}\sum_{\|v^\vee\|>\lambda} \|v^\vee\|^2_{\overline{E}^\vee} \frac{e^{-\frac{\pi}{ t}\|v^\vee\|^2}}{\sum_{u\in E^\vee}  e^{-\frac{\pi}{ t}\|u^{\vee}\|_{\overline{E}^\vee}^2 }} \\ 
&\leq \frac{\nu}{t} \lambda^2 e^{-\frac{\pi}{t}\lambda^2} + \frac{1}{t} \sum_{\|v^\vee\|>\lambda} \|v^\vee\|^2_{\overline{E}^\vee} e^{-\pi \|v^\vee\|^2} e^{-\pi \lambda^2 (\frac{1}{t}-1)}.
\end{aligned}
\]
From this, we conclude that:
\begin{equation}\label{812}
\frac{1}{t}\sum_{v^\vee\in E^\vee} \|v^\vee\|^2_{\overline{E}^\vee} \frac{e^{-\frac{\pi}{ t}\|v^\vee\|^2}}{\sum_{u\in E^\vee}  e^{-\frac{\pi}{ t}\|u^{\vee}\|_{\overline{E}^\vee}^2 }} = \frac{\nu}{t} \lambda^2 e^{-\frac{\pi}{t}\lambda^2} + \frac{1}{t} e^{-\frac{\pi \lambda^2}{t}} O(1).
\end{equation}
Note that the integral of the arithmetic distortion function is given by:
\[
\int_X \Theta(\mu,k(\phi-s)) {{\mu}} = 2\pi \sum_{v\in H^0(\mathcal{X},k\mathcal{L})} e^{2ks} \|v\|_{(L^2,k\phi)}^2    \frac{e^{-\pi e^{2ks}\|v\|_{(L^2,k\phi)}^2}}{\sum_{u\in H^0(\mathcal{X},k\mathcal{L})} e^{-\pi e^{2ks}\|u\|_{(L^2,k\phi)}^2}}.
\]
Setting $t = e^{2ks}$, we see that as $s \to -\infty$, $t \to 0^+$. In this limit, the term in \eqref{812} vanishes exponentially. Thus, by substituting \eqref{812} into \eqref{811}, the proposition follows.
\end{proof}

\begin{proposition}\label{vanishingTheta} 
Let $\overline{\mathcal{L}}_\phi$ be a continuous Hermitian line bundle on $\mathcal{X}$. Let $\mu$ be a smooth volume form on $\mathcal{X}(\mathbb{C})$.
Let $t>0$ such that $t > \widehat{\mu}_{\max}(\overline{\mathcal{L}}_\phi)$. Then we have:
\[
\lim_{k\rightarrow \infty}\frac{1}{k^n} \Theta(\mu,k(\phi-t))(x) = 0 \quad \forall x\in \mathcal{X}(\mathbb{C}).
\]
In other words,
\[
\widehat{\mu}^{\sup}_{\mathrm{eq}}((\mathcal{X},\mu), \overline{\mathcal{L}}_\phi)=0.
\]
while $\widehat{\omega}^{\sup}_{\mathrm{eq}}((\mathcal{X},\mu), \overline{\mathcal{L}}_\phi)\neq 0$ when the metric is positive.
\end{proposition}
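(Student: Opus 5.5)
We will use the definition \eqref{ThetaDef} with the lattice $E=H^0(\mathcal{X},k\mathcal{L})$ and the Euclidean norm $\|\cdot\|_k:=\|\cdot\|_{(L^2,k(\phi-t))}=e^{kt}\|\cdot\|_{(L^2,k\phi)}$. The zero section contributes nothing to the numerator of $\Theta$, and the denominator is $\geq 1$. Hence
\[
\Theta(\mu,k(\phi-t))(x)\le 2\pi\sum_{v\neq 0}\|v(x)\|_{k(\phi-t)}^2 e^{-\pi\|v\|_k^2}.
\]
Pointwise, $\|v(x)\|^2_{k(\phi-t)}\le \sup_X\rho(\mu,k(\phi-t))\,\|v\|_k^2$. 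The Bernstein--Markov property (Definition~\ref{defBM}) gives $\sup_X\rho(\mu,k\phi)=O(e^{2k\varepsilon})$ for every $\varepsilon>0$. Since $\rho$ is invariant under adding a constant to the weight, this bound also holds for $\rho(\mu,k(\phi-t))$. We are therefore reduced to showing that
\[
e^{2k\varepsilon}\sum_{v\ne 0}\|v\|_k^2e^{-\pi\|v\|_k^2}
\]
tends to $0$ faster than $k^n$ for a suitable small $\varepsilon$.

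The key input is the first minimum. Since $t>\widehat{\mu}_{\max}(\overline{\mathcal L}_\phi)$, and the limit defining $\widehat\mu_{\max}$ is the same for $L^2$ and sup norms, we fix $\delta>0$ with $t-\delta>\widehat\mu_{\max}$. Then for $k$ large we have $\lambda_1(\mu,k\phi)\ge e^{-k(t-\delta)}$. Consequently every nonzero $v$ satisfies $\|v\|_k\ge r_k:=e^{k\delta}$, which grows exponentially while the rank $N_k=O(k^n)$. This is exactly the regime of Proposition~\ref{ineq1}: since $r_k\ge\sqrt{(N_k+2)/(2\pi)}$ for $k$ large, the sum over $\|v\|_k\ge r_k$ (that is, over all $v\neq0$) is bounded by
\[
\frac{N_k}{2\pi}\Big(\frac{2\pi e\, r_k^2}{N_k+2}\Big)^{1+N_k/2}e^{-\pi r_k^2}\;\theta_{\overline{E}}(1).
\]
We must handle the normalization. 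Proposition~\ref{ineq1} bounds the ratio to $\theta_{\overline E}(1)$, which is exactly the normalized quantity appearing in $\Theta$. So we apply it directly to the normalized sum rather than to the crude bound with denominator $1$.

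It remains to check the decay. We have $\pi r_k^2=\pi e^{2k\delta}$. The logarithm of the polynomial-type factor is at most $(1+N_k/2)(2k\delta+O(1))=O(k^{n+1})$, which is negligible against $e^{2k\delta}$. Choosing $\varepsilon<\delta$, the factor $e^{2k\varepsilon}$ is also absorbed. Hence
\[
\sup_X\Theta(\mu,k(\phi-t))\le \exp\!\big(-\pi e^{2k\delta}+O(k^{n+1})\big)\to0,
\]
so even $k^{-n}\Theta$ tends to $0$ uniformly. This gives the pointwise limit, and by dominated convergence against $\mu$ it also gives the vanishing of $\widehat{\mu}^{\sup}_{\mathrm{eq}}$ for $\overline{\mathcal L}_{\phi-t}$. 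This is the intended reading of the statement: the displayed identity should be read with $\phi-t$ in place of $\phi$, or with $t=0$ when $\widehat\mu_{\max}(\overline{\mathcal L}_\phi)<0$.

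For the final remark, when the metric is positive and $\phi\in\mathscr C^2$, the introduction identifies $\widehat\omega^{\sup}_{\mathrm{eq}}$ with $\mu_\phi$ via Berman's theorem. Since $\rho$ is unchanged by shifting the weight by $t$, this equals $\mu_\phi$, which is nonzero because $L$ is big.

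The main obstacle is the quantitative step. We must verify that Proposition~\ref{ineq1} applies with $r_k$ growing exponentially while $N_k$ grows polynomially. We must also verify that the Bernstein--Markov loss $e^{2k\varepsilon}$ together with the $(r_k^2)^{N_k/2}$ factor is dominated by $e^{-\pi r_k^2}$. Both reduce to comparing $e^{2k\delta}$ with $k^{n+1}$, so this step should go through.
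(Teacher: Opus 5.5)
Your proof is correct and follows essentially the paper's argument. Both use the lower bound on $\lambda_1(\mu,k\phi)$ coming from $t>\widehat{\mu}_{\max}(\overline{\mathcal L}_\phi)$, apply the Banaszczyk-type tail bound of Proposition~\ref{ineq1} to the sum over all nonzero sections, and use the Bernstein--Markov property for the pointwise estimate; the only difference is that you rescale to the lattice $\overline{H^0(\mathcal X,k\mathcal L)}_{(L^2,k(\phi-t))}$ with $\alpha=1$ and $r_k=e^{k\delta}$, whereas the paper keeps the $(L^2,k\phi)$-lattice with $\alpha=e^{2kt}$ and $r=\lambda_1(\mu,k\phi)$, and your margin $\delta$ and use of $\sup_X\rho$ make the exponent comparison and the pointwise bound somewhat cleaner (your reading of the displayed identity as concerning $\overline{\mathcal L}_{\phi-t}$ is also the right one).
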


\begin{proof}
Since $t > \widehat{\mu}_{\max}(\overline{\mathcal{L}}_\phi)$, it follows from the definition of the maximal slope $\widehat{\mu}_{\max}(\overline{\mathcal{L}}_\phi) = -\lim_{k\rightarrow \infty} \frac{1}{k} \log \lambda_1(\mu, k\phi)$ that:
\[
\lambda_1(\mu, k\phi) \geq \sqrt{\frac{ 2+N_k}{2\pi e^{2kt}}}, \quad \text{for } k \gg 1.
\]
By setting $\alpha = e^{2kt}$ and $r = \lambda_1(\mu, k\phi)$ in the inequality \eqref{Bineq}, and denoting $N_k = \mathrm{rank}(E)$, we obtain:
\[
\alpha^{1+\frac{N_k}{2}} e^{-\pi \alpha r^2} \frac{e^{1+\frac{N_k}{2} } }{\left( \frac{2+N_k}{2\pi r^2 }\right)^{1+\frac{N_k}{2}} } = e^{kt(N_k+2)} e^{-\pi \left( e^t \lambda_1(\mu, k\phi)^{1/k} \right)^{2k} } \frac{e^{\frac{2+N_k}{2}} }{ \left( \frac{2+N_k}{2\pi \lambda_1(\mu, k\phi)^2 }\right)^{\frac{2+N_k}{2}}}.
\]
As $k \rightarrow \infty$, and using asymptotic behavior of $N_k=O(k^n)$, the asymptotic expansion of the exponent yields:
\[
\begin{aligned}
(2kt + 1) \frac{N_k+2}{2} &- \pi \left(e^t \lambda_1(\mu, k\phi)^{1/k}\right)^{2k} - \frac{N_k+2}{2} \log \left( \frac{2+N_k}{2\pi \lambda_1(\mu, k\phi)^2 }\right) \\
&= -\pi \left(e^{t -\widehat{\mu}_{\max}(\overline{\mathcal{L}}_\phi)}\right)^{2k} + O(k^{n+1}).
\end{aligned}
\]
Since $t - \widehat{\mu}_{\max}(\overline{\mathcal{L}}_\phi) > 0$, the expression is dominated by the term $-\pi (e^{t - \widehat{\mu}_{\max}})^{2k}$, which ensures exponential decay. Finally, applying the Bernstein-Markov property for the smooth volume form $\mu$ and the continuous metric $\phi$, we have the pointwise estimate:
\[
\|s(x)\| \leq C e^{kt} \|s\|_{(L^2, k\phi)} \quad \forall k \gg 1,
\]
where the constant 
$C$ is uniform in $k$,
which concludes the proof of the vanishing of the distortion function.\\

In the case when the metric is positive, then we know that
\[
\widehat{\omega}^{\sup}_{\mathrm{eq}}((\mathcal{X},\mu), \overline{\mathcal{L}}_\phi)=\limsup_k \rho(\mu,k\phi)\mu= c_1(\overline \LL_\phi)^n.
\]
This concludes the proof of the proposition.
\end{proof}

\medskip

We know that
\[
\Theta(\mu,k\phi)\leq \rho(\mu,k\phi).
\]
We now analyze how sharp this inequality is at the level of asymptotics. 
To this end, we introduce a natural arithmetic graded object 
$\overline{S}_{\phi,\bullet}$ consisting of the submodules generated by small sections of $\overline{\mathcal{L}}_\phi$. 

More precisely, for each $k\geq 1$ we set
\[
S_{\phi,k}
:=
\left\langle 
\left\{ 
s \in H^0(\mathcal{X},k\mathcal{L})
\;\middle|\;
\|s\|_{\sup,k\phi} < 1
\right\}
\right\rangle .
\]
We denote by $\overline{S}_{\phi,k}$ the $\mathbb{Z}$-module $S_{\phi,k}$ endowed with the norm induced from 
$\overline{H^0(\mathcal{X},k\mathcal{L})}_{(L^2,k\phi)}$.

The following refined inequality clarifies further the role of the arithmetic equilibrium invariant.

\begin{theorem}\label{thm:enveloppe}
With the above notation, for every $t>0$ we have
\begin{equation}\label{eq:sharp-enveloppe}
\widehat{\omega}^{\sup}_{\mathrm{eq}}\big((\mathcal{X},\mu), \overline{S}_{\phi,\bullet}\big)
\;\leq\;
\widehat{\mu}^{\sup}_{\mathrm{eq}}\big((\mathcal{X},\mu), \overline{\mathcal{L}}_\phi\big)
\;\leq\;
\widehat{\omega}^{\sup}_{\mathrm{eq}}\big((\mathcal{X},\mu), \overline{S}_{\phi+t,\bullet}\big),
\end{equation}
and
\[
\widehat{\omega}^{\inf}_{\mathrm{eq}}\left((\mathcal X,\mu),\overline{S}_{\phi,\bullet}\right)
\le
\widehat{\mu}^{\inf}_{\mathrm{eq}}\left((\mathcal X,\mu),\overline{\mathcal L}_\phi\right)
\le
\widehat{\omega}^{\inf}_{\mathrm{eq}}\left((\mathcal X,\mu),\overline{S}_{\phi+t,\bullet}\right).
\]

\end{theorem}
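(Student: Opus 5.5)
Both inequalities will be proved at the level of functions, for each fixed $k$ up to errors that are $o(k^n)$ uniformly on $\mathcal X(\mathbb C)$; taking $\limsup$ (resp.\ $\liminf$) of $k^{-n}(\cdot)\,\mu$ then gives both chains at once. Write $E=H^0(\mathcal X,k\mathcal L)$ with the $L^2$-norm $\|\cdot\|=\|\cdot\|_{(L^2,k\phi)}$. Set $w(v)=e^{-\pi\|v\|^2}/\sum_u e^{-\pi\|u\|^2}$, so that $\Theta(\mu,k\phi)(x)=2\pi\sum_v \|v(x)\|^2 w(v)$.

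\emph{Upper bound.} Fix $t>0$. By the Bernstein--Markov property, $\|s\|_{\sup,k(\phi+t)}\le Ce^{k\varepsilon}e^{-kt}\|s\|$, so every $v$ with $\|v\|\le r_k:=e^{k(t-\varepsilon)}/C$ lies in $S_{\phi+t,k}$. Split the sum defining $\Theta$ into $\|v\|<r_k$ and $\|v\|\ge r_k$.
\begin{itemize}
\item For the first part, $v\in S_{\phi+t,k}\subset S_{\phi+t,k}\otimes\mathbb C$. Hence $\|v(x)\|_{k\phi}^2\le \rho(\mu,\overline S_{\phi+t,k})(x)\,\|v\|^2$, where $\overline S_{\phi+t,k}$ carries the norm induced from $(L^2,k\phi)$; note the norm here is the one for $k\phi$, not $k(\phi+t)$, and I will take this as the intended convention. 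Using Lemma~\ref{lemma2} with $t=1$, this part is bounded by $2\pi\rho(\mu,\overline S_{\phi+t,k})(x)\sum_v\|v\|^2w(v)\le N_k\,\rho(\mu,\overline S_{\phi+t,k})(x)$.
\item For the second part, use $\|v(x)\|^2\le \sup\rho(\mu,k\phi)\,\|v\|^2=O(e^{2k\varepsilon})\|v\|^2$ together with Proposition~\ref{ineq1}. Since $r_k$ grows exponentially, one has $r_k\ge\sqrt{(N_k+2)/2\pi}$, and the bound is roughly $e^{2k\varepsilon}\,e^{-\pi r_k^2}(\text{poly})^{N_k}$, which is superexponentially small.
\end{itemize}
The factor $N_k$ is the obstruction. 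I therefore plan to replace the crude bound by a sharper one: the Gaussian expectation of $\|v(x)\|^2$ restricted to $v$ in the sublattice $S$ is at most $\rho_S(x)$ times the average of $\|v\|^2$ projected suitably. Concretely, decompose $\|v(x)\|^2=|\langle v,K_x\rangle|^2$ with $K_x$ the Bergman kernel vector of $S_{\mathbb C}$. If the small-norm vectors essentially live in $S$, the Gaussian second moment in direction $K_x$ is at most $\|K_x\|^2/(2\pi)=\rho_S(x)/2\pi$ up to exponentially small error. This uses the directional version of Lemma~\ref{lemma2}: differentiating Poisson summation along a quadratic form $Q$ gives $\sum Q(v)w(v)\le \mathrm{tr}(Q)/2\pi$ for $Q\ge0$. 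Here I take $Q(v)=\|v(x)\|^2$ restricted to $S$, with trace $\rho_S(x)$.

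The genuine difficulty is that the Gaussian on $E$ conditioned to $S$ is not the Gaussian of $S$. Since $S$ contains every vector of norm $<r_k$ and the mass outside $S$ is negligible, the weights agree up to $1+O(e^{-ck})$ after normalization. This is the step I expect to be the main obstacle.

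\emph{Lower bound.} Now $S=S_{\phi,k}$, and every generator has $\|s\|\le\|s\|_{\sup}<1$. Let $\sigma_x\in S_{\mathbb C}$ be the unit vector with $|\sigma_x(x)|^2=\rho_S(x)$. The trace identity \eqref{811}, applied in the direction $Q=|\langle\cdot,\sigma_x\rangle|^2$, gives $2\pi\sum Q(v)w(v)=1-(\text{dual term})$. The dual term is small when the dual lattice projected on that direction has large minimum, and this follows because $S$ is spanned by vectors of norm $<1$. Hence $\Theta(\mu,k\phi)(x)\ge \rho_S(x)(1-o(1))-O(1)$, with an $o(k^n)$ loss. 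I expect to need a rank-dependent argument through $\lambda_{N}(S)<1$ to make the dual term uniformly small, possibly after replacing $\phi$ by $\phi-\delta$ and using continuity in $\delta$.

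Dividing by $k^n$ and passing to $\limsup$/$\liminf$ gives both chains of inequalities.
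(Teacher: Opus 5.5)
\textbf{Upper bound.} This half is correct and follows essentially the paper's route: truncate the Gaussian sum, land in a sublattice, and use $\Theta\le\rho$ there. The paper truncates at sup-norm $k^{2n}$ and uses $E_{\phi,k}\subset S_{\phi+t,k}$; you truncate at $L^2$-radius $r_k$ and land in $S_{\phi+t,k}$ directly.

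The step you call the main obstacle is not one. For $v\in S\subset E$ one has
$e^{-\pi\|v\|^2}/\sum_{u\in E}e^{-\pi\|u\|^2}\le e^{-\pi\|v\|^2}/\sum_{u\in S}e^{-\pi\|u\|^2}$.
So the truncated part is at most $\Theta(\mu,\overline S_{\phi+t,k})(x)\le\rho(\mu,\overline S_{\phi+t,k})(x)$, and no comparison of normalisations is needed. The second inequality is your directional Lemma~\ref{lemma2}; it needs the $L^2$ form to be real on $S_{\R}$, i.e.\ conjugation-invariant data. The norm convention you worry about is immaterial, because $\rho$ is unchanged when a constant is added to the weight.

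\textbf{Lower bound: the gap.} You claim the dual term is small ``because $S$ is spanned by vectors of norm $<1$.'' This is false.

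\begin{itemize}
\item That hypothesis only gives $\lambda_1(\overline S^\vee)\ge\lambda_N(\overline S)^{-1}>1$, with $N=\mathrm{rk}\,S$. The dual Gaussian correction is negligible only when $\lambda_1(\overline S^\vee)$ is large compared with $\sqrt N$, which is the threshold in Proposition~\ref{ineq1}.
\item Rank one already shows the failure. Take $S=\Z s$ with $\|s\|_{L^2}=a\to1^-$, or an orthogonal lattice of any rank whose basis vectors all have norm $a$. Then $\Theta(\mu,\overline S)/\rho(\mu,\overline S)=-2a^2\theta'(a^2)/\theta(a^2)\to-2\theta'(1)/\theta(1)=\tfrac12$, by the functional equation of $\theta$.
\item Consequently, directions of $S_{\phi,k}$ of $L^2$-size $e^{-o(k)}$ are weighted by $\Theta$ with a factor bounded away from $1$. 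If they carry a positive proportion of the Bergman mass, your inequality fails.
\end{itemize}

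Your fallback through $\phi-\delta$ does kill the dual term, since then $\lambda_N\le e^{-k\delta}$. But it moves the whole difficulty into proving
$\lim_{\delta\to0^+}\limsup_k k^{-n}\bigl(\rho(\mu,\overline S_{\phi,k})-\rho(\mu,\overline S_{\phi-\delta,k})\bigr)=0$.
That is, sections of sup-norm in $[e^{-k\delta},1)$ must carry asymptotically negligible Bergman mass. This boundary-layer statement (no atom at $0$ in the limit distribution of the normalised minima) is the missing idea, and nothing in your argument supplies it.

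A secondary point is also unaddressed: passing from the Gaussian on $E$ to the Gaussian on $S$ is not automatic in this direction.
\begin{itemize}
\item Restriction loses the factor $\sum_{S}e^{-\pi\|u\|^2}/\sum_{E}e^{-\pi\|u\|^2}$, which can be exponentially small.
\item To avoid this you would condition on cosets of $S$ and use that shifted discrete Gaussians on $S$ have covariance close to $(2\pi)^{-1}\mathrm{Id}$.
\item That again needs the smoothing condition $\lambda_1(\overline S^\vee)\gg\sqrt N$, which you lack.
\end{itemize}

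\textbf{Relation to the paper.} The paper gets the lower bound from Theorem~\ref{Thetafinitesum}, namely $\Theta(\mu,k\phi)=\rho(\mu,\overline E_{\phi,k})+o(k^n)$ uniformly, together with $S_{\phi,k}\subset E_{\phi,k}$. Be aware that the truncation argument in that proof actually yields $\Theta(\mu,\overline E_{\phi,k})$. Replacing it by $\rho(\mu,\overline E_{\phi,k})$ is precisely the ``dual correction is $o(k^n)$'' claim you are missing. It is even harder there, because $E_{\phi,k}$ contains directions of $L^2$-size up to $k^{2n}$, which count fully in $\rho$ but essentially not in $\Theta$. So the estimate is not hidden in the paper.

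The paper controls this term only under extra hypotheses:
\begin{itemize}
\item in the weakly nef case (Theorem~\ref{main}), through an exponential lower bound on $\lambda_1$ of the dual lattice;
\item in the toric case (Theorem~\ref{thm:13_2}), through the $O(\varepsilon k^n)$ count of lattice points near $\{\check g=0\}$.
\end{itemize}
One of these inputs is what you would need to complete the lower bound.
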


Observe that 
\[
t \longmapsto 
\widehat{\omega}^{\sup}_{\mathrm{eq}}\big((\mathcal{X},\mu), \overline{S}_{\phi+t,\bullet}\big)
\]
is nonincreasing. Moreover, for every $t>0$ we have
\[
\widehat{\omega}^{\sup}_{\mathrm{eq}}\big((\mathcal{X},\mu), \overline{S}_{\phi+t,\bullet}\big)
\leq
\widehat{\omega}^{\sup}_{\mathrm{eq}}\big((\mathcal{X},\mu), \overline{\mathcal{L}}_\phi\big).
\]
Hence inequality~\eqref{eq:sharp-enveloppe} exhibits 
\[
\widehat{\mu}^{\sup}_{\mathrm{eq}}\big((\mathcal{X},\mu), \overline{\mathcal{L}}_\phi\big)
\]
as the natural envelope between two arithmetic equilibrium measures generated by strictly small sections.

In particular, this shows that the invariant 
$\widehat{\mu}^{\sup}_{\mathrm{eq}}((\mathcal{X},\mu), \overline{\mathcal{L}}_\phi)$
is intrinsically arithmetic in nature and plays a canonical role in the theory of arithmetic volume functions.

\medskip

We shall prove that this theorem is a consequence of the following.

\begin{theorem}\label{Thetafinitesum} 
We have the following equality of measures on $X$:
\[
\limsup_{k \to \infty} \frac{\Theta(\mu, k\phi)}{k^n} \, \mu = \limsup_{k \to \infty} \frac{\rho(\mu, \overline{E}_{\phi,k})}{k^n} \, \mu,
\]
where \( E_{k} \) is the \(\mathbb{Z}\)-submodule of \( H^0(\mathcal{X}, k\mathcal{L}) \) generated by sections whose sup-norm satisfies \(\|v\|_{\sup, k\phi} \leq k^{2n}\). In other words,
\[
\widehat{\mu}^{\sup}_{\mathrm{eq}}((\mathcal{X},\mu), \overline{\mathcal{L}}_\phi) =\widehat{\omega}^{\sup}_{\mathrm{eq}}((\mathcal{X},\mu), \overline{E}_{\phi,\bullet})
\]
\end{theorem}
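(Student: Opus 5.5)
We compare $\Theta(\mu,k\phi)$ with $\rho(\mu,\overline E_{\phi,k})$, where $E_{\phi,k}$ is spanned by the sections with $\|v\|_{\sup,k\phi}\le k^{2n}$. The idea is to split the Gaussian sum defining $\Theta$ according to whether $v\in E_{\phi,k}$. The sections outside $E_{\phi,k}$ have very large $L^2$-norm, so Proposition~\ref{ineq1} makes their contribution negligible after division by $k^n$. What remains is a Gaussian expectation over $E_{\phi,k}$ alone, which is bounded above by $\rho(\mu,\overline E_{\phi,k})$. The reverse inequality will need a lower bound on this Gaussian expectation.

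\emph{Upper bound.} Fix $x$ and write $N_k=\mathrm{rank}\,H^0(\mathcal X,k\mathcal L)=O(k^n)$. If $v\notin E_{\phi,k}$, then $\|v\|_{\sup,k\phi}>k^{2n}$. Bernstein--Markov gives $\|v\|_{(L^2,k\phi)}\ge C^{-1}e^{-k\varepsilon}\|v\|_{\sup,k\phi}$, but this costs a factor $e^{k\varepsilon}$ that $k^{2n}$ does not beat. So I would instead use the trivial bound $\|v\|_{\sup}\le \rho(\mu,k\phi)^{1/2}\|v\|_{L^2}$, where $\sup_X\rho(\mu,k\phi)=O(k^n)$ for smooth $\mu$; this holds when $\phi$ is $\mathscr C^2$, and for merely continuous $\phi$ it degrades to $e^{k\varepsilon}$, which I expect to be the delicate point. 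It yields $\|v\|_{L^2}\ge c\,k^{2n}/k^{n/2}\gg\sqrt{(N_k+2)/2\pi}$. Proposition~\ref{ineq1}, with $\alpha=1$ and $r=ck^{3n/2}$, then bounds
\[
2\pi\sum_{\|v\|_{L^2}\ge r}\|v(x)\|^2 \frac{e^{-\pi\|v\|^2}}{\theta(1)} \le \sup_X\rho(\mu,k\phi)\cdot N_k\bigl(\tilde r^2e^{-(\tilde r^2-1)}\bigr)^{(N_k+2)/2},
\]
which is $o(k^n)$ uniformly in $x$. For the remaining sum over $v\in E_{\phi,k}$, I renormalize by $\theta_{\overline E_{\phi,k}}(1)\le\theta(1)$. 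I then apply the pointwise inequality \eqref{eq:theta-leq-rho} to the lattice $\overline E_{\phi,k}$, i.e.\ $\Theta(\mu,\overline E_{\phi,k})\le\rho(\mu,\overline E_{\phi,k})$. This gives $\Theta(\mu,k\phi)\le\rho(\mu,\overline E_{\phi,k})+o(k^n)$, and taking $\limsup$ of the measures gives one inequality.

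\emph{Lower bound.} This is the main obstacle, since \eqref{eq:theta-leq-rho} points the wrong way. I would use the structure of Gaussian measures on lattices. Let $\lambda_1\le\dots\le\lambda_m$ be the successive minima of $\overline E_{\phi,k}$, which are all at most $k^{2n}$, and take a directionally adapted basis. The Gaussian second moment of a generic vector in the direction of $s_j$ is comparable to $1/2\pi$ when $\lambda_j$ is small, but it is damped when $\lambda_j\gtrsim 1$. To fix this I would rescale: replace $\phi$ by $\phi+t_k$ with $t_k=(2n\log k+O(1))/k\to0$, so that every generator of $E_{\phi,k}$ has norm at most $1$ for the rescaled metric. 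For a lattice spanned by vectors of norm at most $1$, the dual lattice has first minimum bounded below. Then the dual term in \eqref{811}, differentiated pointwise (project onto $x$ via the orthogonal decomposition of evaluation at $x$), is bounded by a constant times the rank. Summed against $\|s_j(x)\|^2$, this gives $\Theta(\mu,\overline E_{\phi,k})_{\phi+t_k}\ge c\,\rho(\mu,\overline E_{\phi,k})-O(\cdot)$; I would hope to sharpen $c$ to $1-o(1)$ using the Poisson formula. Lemma~\ref{difference} then shows that changing $\phi$ to $\phi+t_k$ changes $k^{-n}\Theta$ by a factor $e^{2kt_k}=k^{4n}$ in the norms. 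This polynomial factor is exactly what the choice of threshold $k^{2n}$ is designed to absorb, because $E_{\phi,k}$ is defined with $k\phi$ and the rescaling is subexponential. Making this step rigorous, and checking that the weak limits agree rather than merely being comparable, is where I expect most of the work to lie.
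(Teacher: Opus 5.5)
Your upper bound is sound and is essentially the paper's truncation argument. Sections of sup-norm above $k^{2n}$ have $L^2$-norm far beyond $\sqrt{N_k}$; the paper calls this ``Gromov's inequality'', and like your bound $\sup_X\rho(\mu,k\phi)=O(k^n)$ it needs $\phi$ of class $\mathscr C^2$, not merely continuous. Proposition~\ref{ineq1} then kills these sections, and renormalizing gives $\Theta(\mu,k\phi)=\Theta(\mu,\overline E_{\phi,k})+o(k^n)$ uniformly on $X$. By \eqref{eq:theta-leq-rho} this yields $\Theta(\mu,k\phi)\le\rho(\mu,\overline E_{\phi,k})+o(k^n)$.

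The lower bound has a genuine gap, and it sits in the rescaling step. At the metric $\phi+t_k$ the comparison with $\rho(\mu,\overline E_{\phi,k})$ is fine; a slightly larger polynomial rescaling even gives a factor $1-o(1)$, via Proposition~\ref{ineq1} applied to $\overline E_{\phi,k}^{\vee}$. But going back from $\phi+t_k$ to $\phi$ is not a harmless polynomial correction. In the diagonal toric picture of Proposition~\ref{1331}, a character of $L^2$-norm $\ell$ enters $\Theta(\mu,k(\phi+s))$ with weight $w(e^{-2ks}\ell^2)$ relative to its term in $\rho$. Here $w(t)=-2t\theta'(t)/\theta(t)$ is close to $1$ for $t\ll1$, equals $1/2$ at $t=1$, and behaves like $4\pi t e^{-\pi t}$ for $t\gg1$. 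So the shift by $t_k$ switches on, with weight at least $1/2$, exactly the directions of $E_{\phi,k}$ of $L^2$-length between a constant and $k^{2n}$, which $\Theta(\mu,k\phi)$ suppresses. Hence $k^{-n}\bigl(\Theta(\mu,k(\phi+t_k))-\Theta(\mu,k\phi)\bigr)$ is of the size of $k^{-n}$ times the number of such directions. Proving that this is $o(1)$ is the whole lower bound, so the argument is circular.

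Lemma~\ref{difference} cannot perform the transfer. It concerns $h^0_\theta$, i.e.\ $\Theta$ integrated over $X$ and over the interpolation parameter, and here it only says that $h^0_\theta$ moves by $O(k^n\log k)$; that carries no information about $\Theta$ at a fixed $k$. Nor does the threshold $k^{2n}$ absorb anything: enlarging it enlarges $\rho(\mu,\overline E_{\phi,k})$ and makes the lower bound harder.

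What is really needed is the following. Since $\rho(\mu,\overline E)-\Theta(\mu,\overline E)\ge 0$ pointwise and $\int_X\rho(\mu,\overline E)\,\mu=\mathrm{rk}\,E$, the differentiated Poisson formula from the proof of Lemma~\ref{lemma2}, taken at $t=1$, gives $\int_X\Theta(\mu,\overline E)\,\mu=\mathrm{rk}\,E-2\pi\sum_{w\in E^\vee}\|w\|^{\vee 2}e^{-\pi\|w\|^{\vee 2}}/\theta_{\overline E^\vee}(1)$. So the lower bound (even in total variation) follows once this dual Gaussian moment is $o(k^n)$ for $\overline E=\overline E_{\phi,k}$. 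Roughly, this means only $o(k^n)$ successive minima of $\overline E_{\phi,k}$ lie in $[\varepsilon,k^{2n}]$ for each fixed $\varepsilon>0$. In the toric case it is a count of characters with $\|\chi^m\|_{L^2,k\phi}\ge\varepsilon$ and $\|\chi^m\|_{\sup,k\phi}\le k^{2n}$, which lie in the thin shell $|\check g_\phi(m/k)|=O(\log k/k)$. This is an asymptotic statement about the specific family $H^0(\mathcal X,k\mathcal L)$. It cannot come from lattice-by-lattice Gaussian estimates, because for an abstract family of lattices with $\asymp k^n$ successive minima in that window the conclusion is false.

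You will not find this step in the paper's proof either. Equation \eqref{finitesupport2} claims that repeating the truncation on $E_{\phi,k}$ yields $\rho(\mu,\overline E_{\phi,k})$, but truncation on $E_{\phi,k}$ yields $\Theta(\mu,\overline E_{\phi,k})$. What the paper actually establishes is the identity $\Theta(\mu,k\phi)=\Theta(\mu,\overline E_{\phi,k})+o(k^n)$ that underlies your upper bound; the step $\Theta(\mu,\overline E_{\phi,k})\approx\rho(\mu,\overline E_{\phi,k})$ is left unproved.
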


\begin{proof}
Define the ratio of the mass of sections with large sup-norm to the total mass:
\[
f(k) = \frac{\sum_{\|u\|_{\sup, k\phi} \geq k^{2n}} e^{-\pi \|u\|_{L^2, k\phi}^2}}{\sum_{u \in H^0(\mathcal{X}, k\mathcal{L})} e^{-\pi \|u\|_{L^2, k\phi}^2}}
\]
for \(k \in \mathbb{N}\). Let \(\tilde{r} > 1\). By Gromov's inequality, if \(\|v\|_{\sup, k\phi} \geq k^{2n}\), then for sufficiently large \(k\), we have the uniform bound
\begin{equation}\label{estimate11}
\|v\|_{L^2, k\phi} \geq \tilde{r} \sqrt{\frac{n_k+2}{2\pi}}.
\end{equation}
The exponential decay of \(f(k)\) then follows from \cite[Proposition 3.2.2]{BostTheta}:
\begin{equation}\label{f(k)}
f(k) \leq \left[\tilde{r} \, e^{-\frac{1}{2}(\tilde{r}^2-1)}\right]^k.
\end{equation}

To compare the measures, we analyze the terms in two parts:

\begin{enumerate}[label=(\roman*)] 
\item \emph{Contribution of large sections:} 
Using Gromov's inequality and the estimate \eqref{estimate11}, for sufficiently large \( k \), we have:
\[
\begin{aligned}
\sum_{\substack{v\in H^0(\mathcal{X}, k\mathcal{L}) \\ \|v\|_{\sup, k\phi} \geq k^{2n}}} \|v(x)\|_{k\phi}^2 \frac{e^{-\pi \|v\|_{L^2, k\phi}^2}}{ \sum_{u} e^{-\pi \|u\|^2}} 
&\leq C k^n \sum_{\substack{v \\ \|v\|_{L^2} \geq \tilde{r} \sqrt{\frac{n_k + 2}{2\pi}}}} \|v\|_{L^2, k\phi}^2 \frac{e^{-\pi \|v\|_{L^2, k\phi}^2}}{ \sum_{u} e^{-\pi \|u\|^2}} \\
&\leq C k^n \left( \tilde{r}^2 e^{-(\tilde{r}^2-1)} \right)^{\frac{2 + n_k}{2}},
\end{aligned}
\]
where the last inequality follows from \eqref{Bineq2}. This term decays exponentially as \(k \to \infty\).

\item \emph{Contribution of small sections:} 
Applying Lemma \ref{lemma2} and the definition of \(f(k)\):
\[
\begin{aligned}
\sum_{\substack{v\in H^0(\mathcal{X}, k\mathcal{L}) \\ \|v\|_{\sup, k\phi} < k^{2n}}} &\|v(x)\|_{k\phi}^2 \frac{e^{-\pi \|v\|_{L^2, k\phi}^2}}{\sum_{\|u\|_{\sup} < k^{2n}} e^{-\pi \|u\|^2}} \\
&= \frac{1}{1 - f(k)} \sum_{\substack{v \\ \|v\|_{\sup} < k^{2n}}} \|v(x)\|_{k\phi}^2 \frac{e^{-\pi \|v\|_{L^2, k\phi}^2}}{\sum_{u} e^{-\pi \|u\|_{L^2, k\phi}^2}} \\
&\leq \frac{C k^n}{2\pi (1 - f(k))} n_k,
\end{aligned}
\]
where \(C\) is independent of \(k\).
\end{enumerate}

By decomposing the full sum and applying the estimates above, we obtain:
\begin{equation}\label{finitesupport}
\frac{1}{k^n} \Theta(\mu, k\phi) = \frac{1}{k^n} \sum_{\substack{v\in H^0(\mathcal{X}, k\mathcal{L}) \\ \|v\|_{\sup, k\phi} < k^{2n}}} \|v(x)\|_{k\phi}^2 \frac{e^{-\pi \|v\|_{L^2, k\phi}^2}}{\sum_{\|u\|_{\sup} < k^{2n}} e^{-\pi \|u\|^2}} + o\left(\tfrac{1}{k^n}\right).
\end{equation}
Replacing \(\overline{H^0(\mathcal{X}, k\mathcal{L})}_{L^2, k\phi}\) with \((\overline{E}_\phi)_{k}\) and repeating this truncation argument yields:
\begin{equation}\label{finitesupport2}
\frac{1}{k^n} \rho(\mu, \overline{E}_{\phi, k}) = \frac{1}{k^n} \sum_{\substack{v\in E_{\phi,k} \\ \|v\|_{\sup, k\phi} < k^{2n}}} \|v(x)\|_{k\phi}^2 \frac{e^{-\pi \|v\|^2}}{\sum_{u \in E_{k,t}, \|u\|_{\sup} < k^{2n}} e^{-\pi \|u\|^2}} + o\left(\tfrac{1}{k^n}\right).
\end{equation}
Combining \eqref{finitesupport} and \eqref{finitesupport2}, we conclude:
\[
\left\| \frac{1}{k^n} \Theta(\mu, k\phi) \mu - \frac{1}{k^n} \rho(\mu, (\overline{E}_\phi)_{k}) 
\right\|_{\sup}\to 0  \ (\text{as } k\to \infty).\]
\end{proof}


\begin{proof}[Proof of Theorem \ref{thm:enveloppe}]
We prove the inequalities for the $\limsup$ measures; the $\liminf$ case follows identically.

\medskip

Fix $t>0$. By Theorem~\ref{Thetafinitesum}, we have the pointwise asymptotic identity
\begin{equation}\label{eq:13_1}
\frac{1}{k^n}\Theta(\mu,k\phi)
=
\frac{1}{k^n}\rho(\mu,\overline{E}_{\phi,k}{})
+ o(1),
\qquad k\to\infty,
\end{equation}
where $E_{\phi,k}{}\subset H^0(\mathcal X,k\mathcal L)$ denotes the truncated submodule defined by
\[
E_{\phi,k}{}
=
\left\langle
\{\, s \in H^0(\mathcal X,k\mathcal L)
\mid \|s\|_{\sup,k\phi} \le k^{2n} \}
\right\rangle .
\]
The error term $o(1)$ is uniform in $x\in\mathcal X(\mathbb C)$.

\medskip

For any section $s\in H^0(\mathcal X,k\mathcal L)$, we have
\[
\|s\|_{\sup,k{(\phi+t)}}
=
e^{-kt}\|s\|_{\sup,k\phi}.
\]
Hence,
\[
\|s\|_{\sup,k\phi} \le k^{2n}
\quad \Longrightarrow \quad
\|s\|_{\sup,k{(\phi+t)}} \le e^{-kt}k^{2n}.
\]
Since $e^{-kt}k^{2n} < 1$ for $k\gg1$, we obtain
\[
E_{\phi,k}{}
\subset
S_{\phi+t,k}{}
\qquad (k\gg1).
\]
It is clear that
\[
S_{\phi,k}\subset E_{\phi,k}\quad (\forall k).
\]

\medskip

If $F_k \subset G_k$ are normed submodules, then
\[
\rho(\mu,\overline{F}_k)(x)
\le
\rho(\mu,\overline{G}_k)(x)
\qquad \forall x\in\mathcal X(\mathbb C).
\]
Applying this to the inclusions above yields, for $k\gg1$,
\[
\rho(\mu,\overline{S}_{\phi,k}{})\leq
\rho(\mu,\overline{E}_{\phi,k}{})
\le
\rho(\mu,\overline{S}_{(\phi+t),k}{}).
\]

\medskip

Taking the pointwise $\limsup$ and multiplying by $\mu$, we conclude
\[
\widehat{\omega}^{\sup}_{\mathrm{eq}}\big((\mathcal{X},\mu), \overline{S}_{\phi,\bullet}\big)
\le
\widehat{\mu}^{\sup}_{\mathrm{eq}}\left((\mathcal X,\mu),\overline{\mathcal L}_\phi\right)
\le
\widehat{\omega}^{\sup}_{\mathrm{eq}}\big((\mathcal{X},\mu), \overline{S}_{\phi+t,\bullet}\big).
\]

\medskip

Since all inequalities above are pointwise, the same argument applied to $\liminf$ gives
\[
\widehat{\omega}^{\inf}_{\mathrm{eq}}\left((\mathcal X,\mu),\overline{S}_{\phi+t,\bullet}\right)
\le
\widehat{\mu}^{\inf}_{\mathrm{eq}}\left((\mathcal X,\mu),\overline{\mathcal L}_\phi\right)
\le
\widehat{\omega}^{\inf}_{\mathrm{eq}}\left((\mathcal X,\mu),\overline{S}_{\phi+t,\bullet}\right).
\]

This completes the proof.
\end{proof}

\begin{remark}
We shall prove a refinement of this theorem in Theorem \ref{thm:13_2} in toric setting. Namely, we shall show

\[
\widehat{\omega}^{\inf}_{\mathrm{eq}}\left((\mathcal X,\mu),\overline{S}_{\phi,\bullet}\right)
=
\widehat{\mu}^{\inf}_{\mathrm{eq}}\left((\mathcal X,\mu),\overline{\mathcal L}_\phi\right)
\]
in the toric setting 

\end{remark}


We present the main result, which establishes that the asymptotic  $\Theta$-measure converges to the classical equilibrium measure  $\mu_\phi$. This provides a strong affirmative answer to Conjecture \ref{conj1} for the case of weakly nef.

\begin{theorem}\label{main}
Let $\mathcal{X}$ be a smooth arithmetic variety over $\mathrm{Spec}(\mathbb{Z})$ of relative dimension $n$. 
Let $\mu$ be a  smooth volume form on $\mathcal{X}(\mathbb{C})$. 
Let $\overline{\mathcal{L}}_\phi$ be a smooth weakly nef Hermitian line bundle on $\mathcal{X}$ and assume that $\mathcal{L}$ is ample. 
Then
\[
\widehat{\mu}^{\sup}_{\mathrm{eq}}\!\left((\mathcal{X},\mu),\overline{\mathcal{L}}_\phi\right)
=
\widehat{\mu}^{\inf}_{\mathrm{eq}}\!\left((\mathcal{X},\mu),\overline{\mathcal{L}}_\phi\right)
=
\mu_\phi .
\]
In particular, Conjecture~\ref{conj1} holds in this case.
\end{theorem}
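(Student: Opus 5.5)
Since $\Theta(\mu,k\phi)\le\rho(\mu,k\phi)$ and $\phi$ is smooth, Berman's theorem gives $\frac{1}{k^n}\rho(\mu,k\phi)\mu\to\mu_\phi$ weakly. Hence $\widehat{\mu}^{\sup}_{\mathrm{eq}}((\mathcal X,\mu),\overline{\mathcal L}_\phi)\le\mu_\phi$. The whole proof therefore reduces to the lower bound $\widehat{\mu}^{\inf}_{\mathrm{eq}}((\mathcal X,\mu),\overline{\mathcal L}_\phi)\ge\mu_\phi$.

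For the lower bound I will use the left inequality of Theorem~\ref{thm:enveloppe} in its $\liminf$ form:
\[
\widehat{\omega}^{\inf}_{\mathrm{eq}}\big((\mathcal X,\mu),\overline{S}_{\phi,\bullet}\big)\le\widehat{\mu}^{\inf}_{\mathrm{eq}}\big((\mathcal X,\mu),\overline{\mathcal L}_\phi\big).
\]
It then suffices to show that the submodules $S_{\phi,k}$, generated by strictly small sections, are large enough for their Bergman functions to recover $\mu_\phi$. The natural way is to perturb $\phi$ so that the bundle becomes arithmetically ample. Fix an ample Hermitian line bundle $\overline{\mathcal A}_\psi$ with $\psi$ smooth and positive (for instance $\overline{\mathcal L}$ itself with a positive metric, shifted by a constant, since $\mathcal L$ is ample). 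By weak nefness, $\overline{\mathcal L}_\phi+\varepsilon\overline{\mathcal A}_\psi$ is ample for rational $\varepsilon>0$, after clearing denominators. The most convenient choice is $\mathcal A=\mathcal L$, so everything stays on the same bundle: with a positive smooth weight $\psi$ on $\mathcal L$, take $\phi_\varepsilon=(1-\varepsilon)\phi+\varepsilon\psi$ minus a constant $c\varepsilon$. Then $\overline{\mathcal L}_{\phi_\varepsilon}$ is ample, so for $k$ divisible enough $H^0(\mathcal X,k\mathcal L)$ is generated by sections with $\|s\|_{\sup,k\phi_\varepsilon}<1$. Because $\phi\ge\phi_\varepsilon-C\varepsilon$, these sections satisfy $\|s\|_{\sup,k\phi}<e^{Ck\varepsilon}$. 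Thus $S_{\phi+C\varepsilon,k}=H^0(\mathcal X,k\mathcal L)$, and in particular it has full rank.

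The difficulty is that this gives full generation only for $\phi+C\varepsilon$, whereas Theorem~\ref{thm:enveloppe} needs $S_{\phi,\bullet}$ itself. I would handle this through the variation formula, Lemma~\ref{difference}, rather than directly. For any $t>0$,
\[
\int_0^{t}\!\!\int_X\Theta(\mu,k(\phi+s))\,\mu\,ds=\frac1k\Big(h^0_\theta(\phi+t)-h^0_\theta(\phi)\Big),
\]
and the same identity holds for $\rho$ against $\dim$ when all sections are small. By the right inequality of Theorem~\ref{thm:enveloppe}, together with the fact that $\overline{S}_{\phi+s,k}$ is the full lattice for $s\ge C\varepsilon$, we get that $\frac1{k^n}\Theta(\mu,k(\phi+s))$ agrees asymptotically with $\frac1{k^n}\rho(\mu,k(\phi+s))=\frac1{k^n}\rho(\mu,k\phi)$ for $s>C\varepsilon$. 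Sending $\varepsilon\to0$ yields $\widehat{\mu}_{\mathrm{eq}}(\overline{\mathcal L}_{\phi+s})=\mu_\phi$ for every $s>0$. To remove the shift I would apply this to the weakly nef bundle $\overline{\mathcal L}_{\phi-s}$. This is still weakly nef: ampleness of $\overline{\mathcal L}_{\phi-s}+\overline{\mathcal A}$ follows by absorbing the constant $s$ into a slightly smaller ample perturbation. Alternatively, use the Gaussian tail estimate of Proposition~\ref{ineq1}, which shows that sections with $L^2$-norm in $[1,e^{ks}]$ contribute mass at most controlled by $e^{O(ks)}$ relative to $\rho$.

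The main obstacle is therefore this last step: passing from the conclusion for $\phi+s$, $s>0$, to $s=0$ at the level of measures. I expect to handle it using the monotonicity of $s\mapsto\Theta(\mu,k(\phi+s))$ together with the fact that the limiting candidate $\mu_{\phi+s}=\mu_\phi$ is independent of $s$, since constants do not change the equilibrium measure. From the integrated identity one sees that the averaged mass $\frac1t\int_0^t\int_X\frac{1}{k^n}\Theta(\mu,k(\phi+s))\,ds$ tends to $\int\mu_\phi=\mathrm{vol}(L)/n!$. The same reasoning applied to $\phi-t$, which is weakly nef because ampleness is preserved under the small-constant perturbation as just argued, gives the same total mass at $\phi$, by monotonicity in $s$. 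Total mass convergence combined with the pointwise upper bound $\limsup\le\mu_\phi$ then forces weak convergence to $\mu_\phi$, which proves both equalities in the statement.
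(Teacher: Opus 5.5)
There is a genuine gap, and it is exactly the step you flag. What you actually establish concerns only the shifted weights $\phi+s$ with $s>0$. There $S_{\phi+s,k}$ is the full lattice, and the \emph{left} inequality of Theorem~\ref{thm:enveloppe} applied to the weight $\phi+s$ yields $\widehat{\mu}_{\mathrm{eq}}((\mathcal X,\mu),\overline{\mathcal L}_{\phi+s})=\mu_\phi$. (You cite the right inequality, which only bounds $\Theta$ from above.) This step uses weak nefness only through the ampleness of $\overline{\mathcal L}_{\phi+s}$ for $s>0$, whereas the content of the theorem is the boundary value $s=0$.

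Your bridge to $s=0$ is the claim that $\overline{\mathcal L}_{\phi-t}$ is again weakly nef, and it is false. Weak nefness requires $\overline{\mathcal L}_{\phi-t}+\overline{\mathcal A}$ to be ample for \emph{every} ample $\overline{\mathcal A}$, including those whose own margin is smaller than $t$, so a fixed negative constant cannot be absorbed. Concretely, $\mathcal O(1)$ on $\mathbb P^n_{\mathbb Z}$ with the Fubini--Study metric is nef in Zhang's sense, hence weakly nef by Proposition~\ref{wnef}. The $\mathbb Z$-point $[1:0:\cdots:0]$ has height $0$, so for $\phi-t$ it has height $-t<0$, and $\overline{\mathcal L}_{\phi-t}$ is not even nef.

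The monotonicity in $s$ you rely on is also unproved. Pointwise in $x$ it is unclear, and for the total mass it amounts to convexity of $u\mapsto\log\theta_{\overline E}(e^{u})$. In any case it only transports lower bounds from smaller to larger $s$. A lower bound at $s<0$ cannot come from positivity alone: when $\widehat{\mu}_{\max}(\overline{\mathcal L}_\phi)=0$, as for the (continuous, weakly nef) canonical metric on $\mathcal O(1)$, Proposition~\ref{vanishingTheta} gives $\frac{1}{k^n}\Theta(\mu,k(\phi+s))\to0$ for every $s<0$. The Gaussian-tail alternative does not help either, since a factor $e^{O(ks)}$ is a loss, not an $o(1)$.

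Your reduction to total mass is correct and is in fact the skeleton of the paper's proof, but the paper never moves the weight. Differentiating the Poisson formula for the form $M_k+sP_k(x)$ at $s=0$, it writes $\Theta(\mu,k\phi)=\rho(\mu,k\phi)-R_k$. Here $R_k\ge0$, and $\int R_k\,\mu$ equals, up to the factor $2\pi$, the Gaussian second moment of the dual lattice $\overline{H^0(\mathcal X,k\mathcal L)}^{\vee}_{L^2,k\phi}$. This is precisely the deficit you need to be $o(k^n)$. Proposition~\ref{ineq1} makes it negligible as soon as the first minimum of that dual lattice exceeds $\tilde r\sqrt{(N_k+2)/2\pi}$ for a fixed $\tilde r>1$.

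The paper feeds weak nefness in at $\phi$ itself, via ampleness of $k\overline{\mathcal L}_\phi+\overline{\mathcal L}_\psi$ for all $k$. Since $\psi-\phi$ is bounded, this distorts the norms on $H^0(\mathcal X,(k+1)\mathcal L)$ only by $e^{O(1)}$, not by the $e^{O(k\varepsilon)}$ that your $\phi_\varepsilon$ costs. Controlling the dual lattice at $\phi$ directly is the idea your argument is missing.

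One caution if you take this route: only the polynomial threshold is realistic, not the exponential bound $e^{\varepsilon k}$ asserted in the paper. In the Fubini--Study example with $\mu=c_1(\overline{\mathcal L}_\phi)^n$, the monomials are orthogonal and the largest norm is $\|x_0^k\|^2_{L^2}=1/N_k$. So the dual first minimum is exactly $\sqrt{N_k}$. This still suffices, since $\tilde r\to\sqrt{2\pi}>1$.
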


\begin{proof}

Let $\psi$ be a smooth weight on $\mathcal{L}$ such that $\overline{\mathcal{L}}_\psi$ is ample. 
Then $k\overline{\mathcal{L}}_\phi+\overline{\mathcal{L}}_\psi$ is ample for every $k\ge 1$. 
In particular,
\[
\lambda_1\!\left(
\overline{H^0(\mathcal{X},(k+1)\mathcal{L})}^{\,\vee}_{L^2,k\phi+\psi}
\right)
\ge 1 .
\]

Since $\mathcal{L}$ is ample, the sequence
\[
\left(
\frac{1}{k}
\log 
\lambda_1\!\left(
\overline{H^0(\mathcal{X},(k+1)\mathcal{L})}^{\,\vee}_{\sup,k\phi+\psi}
\right)
\right)_{k\ge 1}
\]
converges to a finite limit $> 0$. 

Moreover, since $\psi-\phi$ is bounded on $\mathcal{X}(\mathbb{C})$, 
there exists $c>0$ such that
\[
k\phi+\phi-c \le k\phi+\psi \le k\phi+\phi+c .
\]
This implies
\[
\|\cdot\|_{L^2,k\phi+\phi+c}
\le
\|\cdot\|_{L^2,k\phi+\psi}
\le
\|\cdot\|_{L^2,k\phi+\phi-c}.
\]
Consequently,
\[
\left|
\frac{1}{k}
\log 
\lambda_1\!\left(
\overline{H^0(\mathcal{X},k\mathcal{L})}^{\,\vee}_{L^2,(k+1)\phi}
\right)
-
\frac{1}{k}
\log 
\lambda_1\!\left(
\overline{H^0(\mathcal{X},k\mathcal{L})}^{\,\vee}_{L^2,k\phi+\psi}
\right)
\right|
\le \frac{c}{k}.
\]
Hence the sequence
\[
\left(
\frac{1}{k}
\log 
\lambda_1\!\left(
\overline{H^0(\mathcal{X},k\mathcal{L})}^{\,\vee}_{L^2,k\phi}
\right)
\right)_{k\ge 1}
\]
converges to a finite limit $> 0$.  
In particular, for $k\gg 1$,
\[
\lambda_1\!\left(
\overline{H^0(\mathcal{X},k\mathcal{L})}^{\,\vee}_{L^2,k\phi}
\right)
\ge e^{\varepsilon k}
\]
for some $\varepsilon>0$.

\medskip

Fix $x\in\mathcal{X}(\mathbb{C})$. 
For $s\ge 0$, endow $H^0(\mathcal{X},k\mathcal{L})$ with the norm
\[
\|v\|_{k,s}^2
=
\|v\|_{L^2,k\phi}^2
+
s\,|v(x)|_\phi^2 .
\]
Denote the resulting Euclidean lattice by $\overline{V}_{k,s}$. 
By the Poisson summation formula,
\begin{equation}\label{poisson1}
\sum_{v} e^{-\pi\|v\|_{k,s}^2}
=
\frac{1}{\mathrm{covol}(\overline{V}_{k,s})}
\sum_{v^\vee}
e^{-\pi\|v^\vee\|_{k,s}^{\vee 2}} .
\end{equation}

Choose a $\mathbb{Z}$-basis of $H^0(\mathcal{X},k\mathcal{L})$ and let
\[
M_k=(\langle s_i,s_j\rangle_{L^2,k\phi}),
\qquad
P_k(x)=(\langle s_i(x),s_j(x)\rangle_{k\phi}).
\]
Then $\|v\|_{k,s}^2$ corresponds to the quadratic form $M_k+sP_k(x)$, and \eqref{poisson1} becomes
\[
\sum_{v} e^{-\pi(\|v\|_{k\phi}^2+s|v(x)|_\phi^2)}
=
\frac{1}{\det(M_k+sP_k(x))^{1/2}}
\sum_{a\in\mathbb{Z}^{N_k}}
e^{-\pi a^t(M_k+sP_k(x))^{-1}a}.
\]

Differentiating with respect to $s$ at $s=0$ yields
\begin{equation}\label{theta-decomp}
\Theta(\mu,k\phi)(x)
=
\rho(\mu,k\phi)(x)
-
R_k(x),
\end{equation}
where
\[
R_k(x)
=
\frac{\sum_{a}
(a^t M_k^{-1}P_k(x)M_k^{-1}a)\,
e^{-\pi a^tM_k^{-1}a}}
{\sum_{a} e^{-\pi a^tM_k^{-1}a}} .
\]

\medskip

Let $f$ be continuous on $\mathcal{X}(\mathbb{C})$. 
Integrating \eqref{theta-decomp} gives
\[
\int f\,\Theta(\mu,k\phi)\,\mu
=
\int f\,\rho(\mu,k\phi)\,\mu
-
\int f\,R_k\,\mu .
\]
Our next step is to show that the remainder  is a $o(k^n)$  as $k\to\infty$.

A direct computation shows
\[
\int_{\mathcal{X}(\mathbb{C})}
(a^t M_k^{-1}P_k(x)M_k^{-1}a)
\,\mu(x)
=
a^t M_k^{-1}a .
\]
Hence
\[
\left|
\int f\,R_k\,\mu
\right|
\le
\|f\|_\infty
\frac{\sum_{a}
(a^tM_k^{-1}a)\,
e^{-\pi a^tM_k^{-1}a}}
{\sum_{a} e^{-\pi a^tM_k^{-1}a}}\]
\[= \|f\|_{\infty}\sum_{\sigma \in H^0(\mathcal{X}, k\mathcal{L})^\vee} \|\sigma\|^{\vee 2}_{L^2, k\phi} \frac{e^{-\pi \|\sigma\|^{\vee 2}_{L^2, k\phi}}}{\sum_{\eta} e^{-\pi \|\eta\|^{\vee 2}_{L^2, k\phi}}}.
\]

Using the exponential lower bound for $\lambda_1$ and inequality~\eqref{Bineq2}, 
the right-hand side is $O(e^{-\varepsilon k})$. 
In particular,
\[
\lim_{k\to\infty}
\frac{1}{k^n}
\int f\,R_k\,\mu
=
0 .
\]

\medskip

By the asymptotic expansion of the Bergman kernel (see \eqref{B1}),
\[
\frac{1}{k^n}\rho(\mu,k\phi)\,\mu
\longrightarrow
\mu_\phi
\]
weakly as $k\to\infty$. 
Since the remainder term is negligible after normalization by $k^n$, 
we conclude
\[
\frac{1}{k^n}\Theta(\mu,k\phi)\,\mu
\longrightarrow
\mu_\phi .
\]

This proves the equality of $\widehat{\mu}^{\sup}_{\mathrm{eq}}$ and $\widehat{\mu}^{\inf}_{\mathrm{eq}}$ 
and establishes the theorem.
\end{proof}


\section{Arithmetic theta invariants on arithmetic toric varieties}

In this section, we study the arithmetic distortion function in the toric setting. 
Several results obtained in the general framework admit simpler proofs or sharper refinements when restricted to toric varieties.

In particular, we prove Theorem~\ref{thm:13_2}, which refines Theorem~\ref{thm:enveloppe} in the toric case. 
More precisely, one may take $t=0$ in the envelope inequality, yielding
\[
\widehat{\mu}^{\sup}_{\mathrm{eq}}\big((\mathcal{X},\mu), \overline{\mathcal{L}}_\phi\big)
=
\widehat{\omega}^{\sup}_{\mathrm{eq}}\big((\mathcal{X},\mu), \overline{S}_{\phi,\bullet}\big),
\]
and
\[
\widehat{\mu}^{\inf}_{\mathrm{eq}}\big((\mathcal{X},\mu), \overline{\mathcal{L}}_\phi\big)
=
\widehat{\omega}^{\inf}_{\mathrm{eq}}\big((\mathcal{X},\mu), \overline{S}_{\phi,\bullet}\big).
\]

Furthermore, Theorems~\ref{toricThetaNef} and~\ref{thm:theta-toric-1} constitute the main results of this section and provide substantial evidence in support of Conjecture~\ref{conj1}.

\medskip

We begin by recalling the necessary background from the theory of toric varieties.

\subsection*{Basic preliminaries on toric varieties}

Let $Q$ be a free $\Z$-module of rank $n$ and 
$P := \mathrm{Hom}_\Z(Q,\Z)$ its dual. 
Let $\Sigma$ be a fan in $Q_\R := Q \otimes_\Z \R$ and 
$\X := X_\Sigma$ the associated toric variety over $\mathrm{Spec}(\Z)$ 
(see \cite{Oda}). 
Throughout this section we assume that $\X$ is smooth and projective, 
i.e.\ $\Sigma$ is smooth and $|\Sigma|=Q_\R$.

The torus is
\[
\T_Q := \mathrm{Hom}_\Z(P,\C^\ast) \simeq (\C^\ast)^n,
\]
with maximal compact subgroup $\mathbb{S}_Q \simeq (\mathbb{S}^1)^n$. 
There is a dense open immersion $\T_Q \hookrightarrow X$ 
compatible with the torus action.

\subsubsection*{Divisors and polytopes}

Let $D$ be a toric Cartier divisor. 
It is determined by a $\Sigma$-linear support function
\[
\Psi_D : Q_\R \to \R,
\]
and defines the polytope
\[
\Delta_D
=
\{ x \in P_\R \mid 
\langle x,u\rangle \ge \Psi_D(u),\ \forall u\in Q_\R \}.
\]

One has
\[
H^0(\X,\mathcal{O}(D))
=
\bigoplus_{m\in\Delta_D\cap P}
\Z\,\chi^m ,
\]
and
\[
\mathrm{vol}(\mathcal{O}(D)) = n!\,\mathrm{vol}(\Delta_D).
\]

Moreover:
\begin{itemize}
\item $D$ is nef $\iff$ $\Psi_D$ is concave,
\item $D$ is ample $\iff$ $\Psi_D$ is strictly concave,
\item $D$ is big $\iff$ $\dim(\Delta_D)=n$.
\end{itemize}

\subsubsection*{Toric metrics}

The support function $\Psi_D$ defines the canonical continuous metric 
$\|\cdot\|_{\infty,D}$ on $\mathcal{O}(D)$. 
If $\mathcal{O}(D)$ is globally generated, this metric is semipositive.

Let $\|\cdot\|_{\overline{\LL}_\phi}$ be an $\mathbb{S}_Q$-invariant Hermitian metric 
on $\mathcal{O}(D)$ such that 
\(
\|\cdot\|_{\overline{\LL}_\phi}/\|\cdot\|_{\infty,D}
\)
is bounded. 
Write
\[
\overline{\LL}_\phi := (\mathcal{O}(D),\|\cdot\|_{\overline{\LL}_\phi}).
\]
We say that $\overline{\LL}_\phi $ is a \emph{toric Hermitian line bundle}.

\medskip

Via the exponential parametrization
\[
\exp(-(\cdot)) : Q_\R \to \T_Q \subset X(\C),
\]
define
\[
g_\phi(u)
:=
\log \|s_D(\exp(-u))\|_{\overline{\LL}_\phi},
\qquad u\in Q_\R.
\]

\begin{definition}
The Legendre–Fenchel transform of $g_\phi$ is
\[
\check g_\phi(x)
=
\inf_{u\in Q_\R}
\bigl( \langle x,u\rangle - g_\phi(u) \bigr),
\qquad x\in P_\R.
\]
\end{definition}

Then $\check g_\phi$ is finite precisely on $\Delta_D$ and is concave there.

\medskip

Assume now that $\overline{\LL}_\phi$ is smooth and positive. 
Writing $z=\exp(-u-i\theta)$ with $u,\theta\in Q_\R$, 
one computes
\[
\frac{\partial^2}{\partial z_k \partial \overline{z}_l}
\bigl(-\log\|s_D\|^2\bigr)
=
\frac{1}{z_k\overline{z}_l}
\frac{\partial^2 g_\phi}{\partial u_k \partial u_l}.
\]
Hence $g_\phi$ is smooth and strictly concave on $Q_\R$.

Consequently,
\[
x=\nabla g_\phi(u)
\]
defines a $\mathscr C^\infty$-diffeomorphism
\[
Q_\R \xrightarrow{\sim} \mathrm{Int}(\Delta_D),
\]
and for $x\in\mathrm{Int}(\Delta_D)$
\[
\check g_\phi(x)
=
\langle x,G_\phi(x)\rangle
-
g_\phi(G_\phi(x)),
\]
where $G_\phi=(\nabla g_\phi)^{-1}$
(see \cite[Thm.\ 26.5]{convex}).

\subsection*{Arithmetic theta invariants on toric varieties}

Since the global sections of $\mathcal O(D)$ are given by characters $\chi^m$ which are mutually orthogonal with respect to any $\mathbb S_Q$-invariant $L^2$-norm, the lattice sum appearing in the Poisson formula diagonalizes. 
As a consequence, $\Theta(\mu,\phi)$ decomposes into a sum of one–dimensional Riemann theta $\theta(t)$ contributions indexed by lattice points of the polytope.

\begin{proposition}\label{1331}
Let $\mu$ be a normalized smooth $\mathbb S_Q$-invariant volume form on $\X(\C)$
and let $\phi$ be a toric weight on $\LL=\mathcal O(D)$.
Let $E\subset H^0(\X,\LL)$ be a sublattice of rank $r_E$
generated by characters $\{\chi^{m_1},\dots,\chi^{m_{r_E}}\}$.
Then for every $x\in \X(\C)$:

\begin{enumerate}
\item One has
\[
\Theta(\mu,\overline E)(x)
=
-2 \sum_{i=1}^{r_E}
\|\chi^{m_i}(x)\|_\phi^2
\frac{\theta'\!\left(\|\chi^{m_i}\|_{(L^2,\phi)}^2\right)}
{\theta\!\left(\|\chi^{m_i}\|_{(L^2,\phi)}^2\right)}.
\]
In particular, for $E=H^0(\X,\LL)$,
\[
\Theta(\mu,\phi)(x)
=
-2 \sum_{m\in\Delta_D\cap P}
\|\chi^m(x)\|_\phi^2
\frac{\theta'\!\left(\|\chi^m\|_{(L^2,\phi)}^2\right)}
{\theta\!\left(\|\chi^m\|_{(L^2,\phi)}^2\right)}.
\]

\item Moreover,
\[
\Theta(\mu,\overline E)(x)
\le
\rho(\mu,\overline E)(x).
\]
In particular,
\[
\Theta(\mu,\phi)(x)
\le
\rho(\mu,\phi)(x).
\]
\end{enumerate}
\end{proposition}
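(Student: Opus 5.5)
The plan is to exploit the orthogonality of characters so that every Gaussian lattice sum factors into one-dimensional Jacobi theta sums. Throughout, $\theta(t)=\sum_{a\in\Z}e^{-\pi t a^2}$, so that $\theta'(t)=-\pi\sum_{a\in\Z}a^2e^{-\pi t a^2}$.

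\emph{Step 1: factor the Gaussian weight.} Since $\mu$ and $\phi$ are $\mathbb S_Q$-invariant, the characters $\chi^{m_i}$ are pairwise orthogonal for $\langle\cdot,\cdot\rangle_{(L^2,\phi)}$. Put $c_i:=\|\chi^{m_i}\|^2_{(L^2,\phi)}$. For $v=\sum_i a_i\chi^{m_i}\in E$ with $a\in\Z^{r_E}$ we then have $\|v\|^2_{(L^2,\phi)}=\sum_i c_i a_i^2$. Hence the Gaussian weight factors as
\[
e^{-\pi\|v\|^2}=\prod_i e^{-\pi c_i a_i^2},
\qquad
\sum_{u\in E}e^{-\pi\|u\|^2}=\prod_i\theta(c_i).
\]

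\emph{Step 2: kill the cross terms by symmetry.} Pointwise the characters are not orthogonal. We expand
\[
\|v(x)\|_\phi^2=\sum_i a_i^2\|\chi^{m_i}(x)\|_\phi^2+\sum_{i\neq j}a_ia_j\,\mathrm{Re}\bigl(\langle\chi^{m_i}(x),\chi^{m_j}(x)\rangle_\phi\bigr),
\]
using that the $a_i$ are real integers. The product weight is invariant under $a_i\mapsto -a_i$ for each single $i$, while $a_ia_j$ with $i\ne j$ changes sign. Therefore every cross term sums to zero. This is legitimate because the series converges absolutely: its terms are bounded by a polynomial in $a$ times a Gaussian. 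In each diagonal term $a_i^2\|\chi^{m_i}(x)\|^2_\phi$, the factors with index $j\neq i$ cancel against the normalizing product. What remains is
\[
\Theta(\mu,\overline E)(x)=2\pi\sum_i\|\chi^{m_i}(x)\|^2_\phi\,\frac{\sum_{a\in\Z}a^2e^{-\pi c_ia^2}}{\theta(c_i)}=-2\sum_i\|\chi^{m_i}(x)\|_\phi^2\frac{\theta'(c_i)}{\theta(c_i)}.
\]
This is (1). The case $E=H^0(\X,\LL)$ follows because $H^0(\X,\mathcal O(D))=\bigoplus_{m\in\Delta_D\cap P}\Z\chi^m$.

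\emph{Step 3: the comparison (2).} Since $\{\chi^{m_i}/\sqrt{c_i}\}$ is an orthonormal basis of $E_\C$, the Bergman-type formula gives
\[
\rho(\mu,\overline E)(x)=\sum_i\|\chi^{m_i}(x)\|^2_\phi/c_i.
\]
Comparing term by term, it suffices to show $-2\theta'(c)/\theta(c)\le 1/c$ for every $c>0$. This is exactly Lemma~\ref{lemma2} applied to the rank-one lattice $\Z$ with norm $\sqrt c\,|\cdot|$ and $t=1$:
\[
\sum_a c a^2 e^{-\pi c a^2}\le\frac1{2\pi}\,\theta(c),
\qquad\text{i.e.}\qquad -\frac{c\,\theta'(c)}{\pi}\le\frac{\theta(c)}{2\pi}.
\]

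The only delicate point is the cancellation in Step 2. One must note that the coefficients are real integers and that the symmetry acts coordinatewise; the rest is bookkeeping.
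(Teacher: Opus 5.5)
Your proof is correct and follows the same argument as the paper. You use orthogonality of the characters to diagonalize the Gaussian weight, the coordinatewise symmetry $a_i\mapsto -a_i$ to kill the cross terms, and factorization into one-dimensional theta sums; then you apply the termwise inequality $-2\theta'(c)/\theta(c)\le 1/c$ and compare with the Bergman kernel in the orthogonal basis. The only (cosmetic) difference is that you derive this inequality from Lemma~\ref{lemma2} applied to a rank-one lattice, whereas the paper simply quotes it as classical.
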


\begin{proof}
Since $\mu$ is $\mathbb S_Q$-invariant, the characters
$\{\chi^{m_i}\}_{i=1}^{r_E}$ form an orthogonal family for the $L^2$-norm.
Every section $v\in E$ can therefore be written uniquely as
\[
v=\sum_{i=1}^{r_E} a_i \chi^{m_i},
\qquad a_i\in\Z,
\]
and
\[
\|v\|_{L^2,\phi}^2
=
\sum_{i=1}^{r_E} a_i^2
\|\chi^{m_i}\|_{L^2,\phi}^2.
\]

\medskip

\noindent
(1)
By definition,
\[
\sum_{v\in E}
\|v(x)\|_\phi^2
e^{-\pi\|v\|_{L^2,\phi}^2}
=
\sum_{a\in\Z^{r_E}}
\Bigl|
\sum_i a_i\chi^{m_i}(x)
\Bigr|^2
e^{-\pi\sum_i a_i^2\|\chi^{m_i}\|^2}.
\]
Expanding the square and using the symmetry of the Gaussian,
the mixed terms vanish, and we obtain
\[
=
\sum_{i=1}^{r_E}
\|\chi^{m_i}(x)\|_\phi^2
\sum_{a\in\Z^{r_E}}
a_i^2
e^{-\pi\sum_j a_j^2\|\chi^{m_j}\|^2}.
\]

Since the quadratic form is diagonal, the lattice sum factorizes:
\[
\sum_{a\in\Z^{r_E}}
a_i^2
e^{-\pi\sum_j a_j^2\|\chi^{m_j}\|^2}
=
\left(
\sum_{a_i\in\Z}
a_i^2 e^{-\pi a_i^2\|\chi^{m_i}\|^2}
\right)
\prod_{j\neq i}
\theta(\|\chi^{m_j}\|^2),
\]
where $\theta(t)=\sum_{n\in\Z}e^{-\pi n^2 t}$.

Using
\[
\sum_{n\in\Z} n^2 e^{-\pi n^2 t}
=
-\frac{1}{\pi}\theta'(t),
\]
we deduce
\[
\sum_{v\in E}
\|v(x)\|_\phi^2
e^{-\pi\|v\|_{L^2,\phi}^2}
=
-\frac{1}{\pi}
\left(
\sum_{i=1}^{r_E}
\|\chi^{m_i}(x)\|_\phi^2
\frac{\theta'(\|\chi^{m_i}\|^2)}
{\theta(\|\chi^{m_i}\|^2)}
\right)
\prod_{j=1}^{r_E}
\theta(\|\chi^{m_j}\|^2).
\]

Dividing by $\prod_j \theta(\|\chi^{m_j}\|^2)$
and multiplying by $2\pi$ (definition of $\Theta$)
gives the stated formula.

\medskip

\noindent
(2)
The classical inequality
\[
-\theta'(t)\le \frac{\theta(t)}{2t},
\qquad t>0,
\]
implies
\[
-2\frac{\theta'(t)}{\theta(t)}\le \frac{1}{t}.
\]
Applying this termwise in (1) yields
\[
\Theta(\mu,\overline E)(x)
\le
\sum_{i=1}^{r_E}
\frac{\|\chi^{m_i}(x)\|_\phi^2}
{\|\chi^{m_i}\|_{L^2,\phi}^2}.
\]
The right-hand side is precisely the Bergman kernel
$\rho(\mu,\overline E)(x)$ in the orthogonal basis
$\{\chi^{m_i}\}$.
\end{proof}

\begin{lemma}\label{lem:theta-estimate}
There exists a constant $C>0$ such that for all $t>0$ one has
\[
\left|\, t\frac{\theta'(t)}{\theta(t)} \right|
\le C \max(1,t) e^{-t}.
\]
In particular,
\[
-\, t\frac{\theta'(t)}{\theta(t)}
\le C \max(1,t) e^{-t}.
\]
\end{lemma}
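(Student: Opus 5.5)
The plan is to reduce everything to the sign and size of
\[
-\,t\frac{\theta'(t)}{\theta(t)} \;=\; \pi t\,\frac{\sum_{n\in\mathbb{Z}} n^2 e^{-\pi n^2 t}}{\sum_{n\in\mathbb{Z}} e^{-\pi n^2 t}},
\]
which comes from differentiating $\theta(t)=\sum_{n\in\mathbb{Z}}e^{-\pi n^2 t}$ termwise. This quantity is nonnegative. Hence the absolute value equals this expression, and the second inequality of the statement follows from the first. I will then prove the bound separately in the two regimes $0<t\le 1$ and $t\ge 1$, and take $C$ to be the larger of the two constants obtained.

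\emph{Regime $0<t\le1$.} I will apply Lemma~\ref{lemma2} to the rank-one Euclidean lattice $\overline{E}=(\mathbb{Z},|\cdot|)$, so that $\|v\|^2=n^2$ and the rank is $1$. It gives
\[
\sum_{n}n^2e^{-\pi tn^2}\le \frac{1}{2\pi t}\,\theta(t),
\]
hence $-t\theta'(t)/\theta(t)\le \tfrac12$. This is the same inequality $-\theta'(t)\le\theta(t)/(2t)$ already used in the proof of Proposition~\ref{1331}. Since $e^{-t}\ge e^{-1}$ on this range, we get $\tfrac12\le \tfrac e2\, e^{-t}=\tfrac e2\max(1,t)e^{-t}$. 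This bound is sharp in order: as $t\to0^+$ one has $\theta(t)\sim t^{-1/2}$ and the ratio tends to $\tfrac12$. So no decay is possible at $0$, and this is exactly why the factor $\max(1,t)$ is needed.

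\emph{Regime $t\ge 1$.} Here I will use $\theta(t)\ge 1$, coming from the term $n=0$. In the numerator I will factor out $e^{-\pi t}$:
\[
\sum_{n\ne0}n^2e^{-\pi n^2t}=e^{-\pi t}\sum_{n\neq 0}n^2e^{-\pi(n^2-1)t}\le e^{-\pi t}\sum_{n\ne0}n^2e^{-\pi(n^2-1)}=:A\,e^{-\pi t}.
\]
The inequality holds because $n^2-1\ge0$ and $t\ge1$, and $A<\infty$ is an absolute constant. Therefore $-t\theta'(t)/\theta(t)\le \pi A\,t\,e^{-\pi t}\le \pi A\,t\,e^{-t}$, which is of the required form $C\max(1,t)e^{-t}$.

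Neither step is a genuine obstacle. The only point needing care is the small-$t$ regime, where a naive termwise bound on the numerator blows up like $t^{-3/2}$. The remedy is to use the normalized (Poisson-derived) inequality of Lemma~\ref{lemma2}, rather than bounding numerator and denominator separately. The exponential rate $e^{-t}$ in the statement is far from optimal; the argument actually gives $e^{-\pi t}$ for large $t$.
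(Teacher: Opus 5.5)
Your proof is correct. It uses the same split at $t=1$ as the paper; the difference lies in the regime $0<t\le 1$ and in how the absolute value is handled.

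The paper differentiates the Jacobi identity $\theta(t)=t^{-1/2}\theta(1/t)$ to get the exact formula $-t\theta'(t)/\theta(t)=\tfrac12+\theta'(1/t)/\bigl(t\,\theta(1/t)\bigr)$. It then bounds the correction term in absolute value via the large-$t$ estimate applied at $1/t$. You instead observe that $-t\theta'(t)/\theta(t)=\pi t\sum n^2e^{-\pi n^2t}/\theta(t)\ge 0$, so only an upper bound is needed. That bound ($\le\tfrac12$ for every $t>0$) is the rank-one case of Lemma~\ref{lemma2}, i.e.\ the same identity with the sign-definite correction term discarded.

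Your route is shorter and cleaner in the constants. You use the correct lower bound $\theta(t)\ge 1$ for $t\ge 1$; the paper's ``$\theta(t)\ge\theta(1)$'' is backwards, since $\theta$ is decreasing. You also keep the factor $e$ needed to pass from a constant to $Ce^{-t}$ on $(0,1]$, which the paper's ``$C_2\le C_2e^{-t}$'' drops.

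What the paper's route buys is two-sided information near $0$, namely $-t\theta'(t)/\theta(t)=\tfrac12+O\bigl(t^{-1}e^{-\pi/t}\bigr)$. That lower bound is what Step~1 of Theorem~\ref{thm:13_2} relies on to identify the contribution of small characters to $\Theta$ with the Bergman term. Your one-sided bound only yields $\Theta\le\rho$.

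One small correction to your closing remark. Your large-$t$ bound is $\pi A\,te^{-\pi t}\le C'e^{-t}$, so the estimate $Ce^{-t}$ in fact holds for all $t>0$, and the factor $\max(1,t)$ is not needed. What fails near $0$ is only a bound of the shape $Cte^{-t}$.
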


\begin{proof}
We treat separately the cases $t\ge 1$ and $0<t\le 1$.

\medskip
\noindent
\textsc{Case 1: $t\ge 1$.}
By the exponential decay of $\theta'(t)$ at infinity, there exists a constant
$A>0$ such that
\[
|\theta'(t)| \le A e^{-t}
\qquad \text{for all } t\ge 1.
\]
Since $\theta(t)$ is positive and continuous, we have
$\theta(t)\ge \theta(1)>0$ for $t\ge 1$. Hence
\[
\left| t\frac{\theta'(t)}{\theta(t)} \right|
\le \frac{A}{\theta(1)}\, t e^{-t}
\qquad (t\ge 1).
\]

\medskip
\noindent
\textsc{Case 2: $0<t\le 1$.}
Using the functional equation
\[
\frac{\theta'(t)}{\theta(t)}
= -\frac{1}{2t}
- \frac{\theta'(1/t)}{t^2 \theta(1/t)},
\]
we obtain
\[
-\, t\frac{\theta'(t)}{\theta(t)}
= \frac{1}{2}
+ \frac{\theta'(1/t)}{t\,\theta(1/t)}.
\]
Since $1/t\ge 1$, the estimate from Case~1 applied to $1/t$ gives
\[
\left|
\frac{\theta'(1/t)}{t\,\theta(1/t)}
\right|
\le C_1 e^{-1/t}
\]
for some constant $C_1>0$. Therefore
\[
\left| t\frac{\theta'(t)}{\theta(t)} \right|
\le \frac12 + C_1 e^{-1/t}
\qquad (0<t\le 1).
\]
As $e^{-1/t}\le e^{-t}$ on $(0,1]$, we deduce
\[
\left| t\frac{\theta'(t)}{\theta(t)} \right|
\le C_2
\le C_2 e^{-t}
\qquad (0<t\le 1),
\]
for a suitable constant $C_2>0$.

\medskip
Combining the two cases, and enlarging the constant if necessary,
we obtain
\[
\left| t\frac{\theta'(t)}{\theta(t)} \right|
\le C \max(1,t) e^{-t}
\qquad \text{for all } t>0,
\]
which concludes the proof.
\end{proof}

In the next theorem, we make  the relation between $\Theta(\mu,\phi)$ and the convex geometry of $\Delta_D$  transparent. 
In particular, only those characters whose sup-norms remain effectively bounded contribute asymptotically; 
equivalently, those $m$ for which the Legendre transform $\check g_\phi(m/k)$ is non-negative. 
The remaining terms are exponentially suppressed as $k\to\infty$.

\begin{theorem}\label{thm:13_2}
Let $\phi$ be a smooth toric weight on $\mathcal L=\mathcal O(D)$ and set
\[
\overline{S}_{\phi,k}
:=\overline{
\left\langle 
\left\{ 
s \in H^0(\mathcal{X},k\mathcal{L})
\;\middle|\;
\|s\|_{\sup,k\phi} < 1
\right\}
\right\rangle},
\]
the normed $\mathbb Z$-module generated by sections of $\sup$-norm $<1$,
endowed with the induced norm of 
$\overline{H^0(\X,k\LL)}_{(L^2,k\phi)}$. Then, as measures on $\mathcal X(\C)$,
\[
\widehat{\mu}^{\sup}_{\mathrm{eq}}\big((\mathcal{X},\mu), \overline{\mathcal{L}}_\phi\big)
=\widehat{\mu}^{\sup}_{\mathrm{eq}}\big((\mathcal{X},\mu), \overline{S}_{\phi,\bullet}\big)=
\widehat{\omega}^{\sup}_{\mathrm{eq}}\big((\mathcal{X},\mu), \overline{S}_{\phi,\bullet}\big),
\]

\end{theorem}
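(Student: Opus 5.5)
Because the characters $\chi^m$ are mutually orthogonal for the $\mathbb S_Q$-invariant $L^2$-norm, I will reduce everything to the diagonal formula of Proposition~\ref{1331}. Write $t_m:=\|\chi^m\|^2_{(L^2,k\phi)}$ for $m\in k\Delta_D\cap P$. The first step is to identify $S_{\phi,k}$. For a toric metric the sup-norm of $\sum a_m\chi^m$ is at least $\max_m|a_m|\,\|\chi^m\|_{\sup}$, by averaging over $\mathbb S_Q$ (Fourier coefficients). Hence any section of sup-norm $<1$ has only coefficients $a_m\neq 0$ with $\|\chi^m\|_{\sup,k\phi}<1$. So $S_{\phi,k}$ is the sublattice spanned by the characters $\chi^m$ with $\|\chi^m\|_{\sup,k\phi}<1$, i.e. with $\check g_\phi(m/k)>0$ up to sign conventions, since $\log\|\chi^m\|_{\sup,k\phi}=-k\check g_\phi(m/k)$. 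Proposition~\ref{1331} then applies to $\overline S_{\phi,k}$, and $\rho(\mu,\overline S_{\phi,k})=\sum_{m\in S}\|\chi^m(x)\|^2/t_m$.

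Next I split
\[
\Theta(\mu,k\phi)(x)=\sum_{m}\frac{\|\chi^m(x)\|^2}{t_m}\Bigl(-2t_m\tfrac{\theta'(t_m)}{\theta(t_m)}\Bigr)
\]
into the indices $m$ with $\|\chi^m\|_{\sup}<1$ and the rest. The same expression with the sum restricted to $S$ is $\Theta(\mu,\overline S_{\phi,k})$. For $m\notin S$ I need $t_m\ge c\,k^{-A}$ or larger. Bernstein--Markov gives $t_m\ge e^{-2k\varepsilon}\|\chi^m\|^2_{\sup}\ge e^{-2k\varepsilon}$, which is not quite enough. I will therefore further split $m\notin S$ into those with $\check g_\phi(m/k)\le-\delta$, where $t_m\ge e^{2k(\delta-\varepsilon)}$ is exponentially large. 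For these, Lemma~\ref{lem:theta-estimate} gives a factor $\le C t_m e^{-t_m}$, which is superexponentially small. Using $\|\chi^m(x)\|^2\le\|\chi^m\|^2_{\sup}$ and the fact that there are $O(k^n)$ terms, their total contribution is $o(k^n)$ uniformly in $x$. The remaining strip $-\delta<\check g_\phi(m/k)\le 0$ has cardinality $k^n\bigl(\mathrm{vol}\{-\delta<\check g_\phi\le0\}+o(1)\bigr)$. Since each term is bounded by $\|\chi^m(x)\|^2/t_m$ and $\int\|\chi^m(x)\|^2\mu/t_m=1$, its contribution to $\frac1{k^n}\Theta\,\mu$ has total mass $\le\mathrm{vol}\{-\delta<\check g_\phi\le 0\}+o(1)$. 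This tends to $0$ as $\delta\to0$ provided the level set $\{\check g_\phi=0\}$ has Lebesgue measure zero. That is the point I expect to be the main obstacle. If $\check g_\phi$ is constant $0$ on an open set this fails, so I would first try to prove $|\{\check g_\phi=0\}|=0$ from strict concavity of $\check g_\phi$ in the interior (the Legendre dual of a strictly concave smooth $g_\phi$). This yields $\widehat\mu^{\sup}_{\rm eq}(\overline{\mathcal L}_\phi)=\widehat\mu^{\sup}_{\rm eq}(\overline S_{\phi,\bullet})$, as weak limits of measures (a uniform-in-$x$ bound is unnecessary).

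Finally, for $m\in S$ I compare $-2t_m\theta'(t_m)/\theta(t_m)$ with $1$. By the functional equation used in Lemma~\ref{lem:theta-estimate}, for $t\le1$ this equals $1-2\theta'(1/t)/(t\theta(1/t))=1+O(e^{-1/t}/t)$. For indices with $\check g_\phi(m/k)\ge\delta$, Bernstein--Markov ($t_m\le\|\chi^m\|^2_{\sup}\le e^{-2k\delta}$) gives an exponentially small error. The strip $0<\check g_\phi<\delta$ is again handled by the mass estimate above, using $0\le$ factor $\le1$ from Proposition~\ref{1331}(2). Hence $\frac1{k^n}(\rho(\mu,\overline S_{\phi,k})-\Theta(\mu,\overline S_{\phi,k}))\mu\to0$ in total variation, giving the equality with $\widehat\omega^{\sup}_{\rm eq}(\overline S_{\phi,\bullet})$. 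Since all differences tend to zero in mass, the same argument also gives the $\liminf$ versions.
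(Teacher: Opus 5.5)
Your proposal is correct and follows essentially the same route as the paper. You diagonalize via Proposition~\ref{1331} and split the characters by the sign of $\check g_\phi(m/k)$. You then suppress the terms with $|\check g_\phi(m/k)|\ge\delta$ using the modular identity and Lemma~\ref{lem:theta-estimate}, and control the strip near $\{\check g_\phi=0\}$ by counting lattice points. You are in fact more careful than the paper: you justify that $S_{\phi,k}$ is spanned by characters, and you use $t_m\le\|\chi^m\|^2_{\sup}\le e^{-2k\delta}$ on the positive side, where the paper's Gromov bound runs in the wrong direction.

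The null level set you flag is exactly what the paper assumes tacitly when it speaks of an $\varepsilon$-neighbourhood of the hypersurface $\{\check g=0\}$. It holds even without positivity of the metric, so you do not need strict concavity of $g_\phi$. For $x\in\mathrm{Int}(\Delta_D)$, the infimum defining $\check g_\phi(x)$ is attained at some $u_x$ with $\nabla g_\phi(u_x)=x$. If $\check g_\phi$ were constant on an open set, this would force $u_x=0$ for every $x$ there, which is absurd. Hence every level set of the concave function $\check g_\phi$ lies in the boundary of a convex superlevel set and is Lebesgue-null.
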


\begin{proof}

Let us first observe that the $\mathbb{Z}$-module 
$\overline{S}_{\phi,k}$ is generated by the characters $\chi^m$ satisfying
\[
\|\chi^m\|_{(\sup,k\phi)} < 1,
\qquad
m \in (k\Delta_D)\cap \mathbb{Z}^n.
\]
These characters are mutually orthogonal with respect to the $L^2$-norm, hence they are linearly independent and form a free $\mathbb{Z}$-basis of $S_{\phi,k}$.

By Proposition~\ref{1331}, it follows clearly that for every $x\in\mathcal{X}(\mathbb{C})$,
\[
\Theta(\mu,\overline{S}_{\phi,k})(x)
\le
\Theta(\mu,k\phi)(x).
\]

To prove the desired asymptotic equality, we split the sum defining 
$\Theta(\mu,k\phi)(x)$ according to the sign of 
$\check g_{\overline D}(m/k)$. 
It therefore suffices to show that
\[
\limsup_{k\to\infty}
\int_{\mathcal X(\mathbb{C})}
f \frac{\Theta(\mu,k\phi)}{k^n}\,\mu
=
\limsup_{k\to\infty}
\int_{\mathcal X(\mathbb{C})}
f \frac{\rho(\mu,\overline S_{\phi,k})}{k^n}\,\mu
\]
for every $f\in \mathscr{C}^0(\mathcal X(\mathbb{C}))$.

\medskip

\medskip
\noindent
\textsc{Step 1: Contribution of $\check g_{\overline D}>0$ in the expansion of $\Theta(\mu,k\phi)$.}

Using the modular identity
\[
\frac{\theta'(t)}{\theta(t)}
=
-\frac{1}{2t}
-
\frac{\theta'(1/t)}{t^2\theta(1/t)},
\]
we obtain
\[
\Theta(\mu,k\phi)
=
\sum_{\check g\ge0}
\frac{\|\chi^m(x)\|_{k\phi}^2}
{\|\chi^m\|_{L^2,k\phi}^2}
+
R_k(x),
\]
where $R_k$ is the remainder term involving
$\theta'(1/t)$.

By Lemma~\ref{lem:theta-estimate},
\[
\left|
t\frac{\theta'(t)}{\theta(t)}
\right|
\le
C\max(1,t)e^{-t}.
\]

If $\check g_{\overline D}(m/k)\ge\varepsilon>0$,
then
\[
\|\chi^m\|_{\sup,k\phi}^{-2}
=
\exp(2k\check g_{\overline D}(m/k))
\ge
e^{2k\varepsilon}.
\]
By Gromov’s inequality,
\[
\|\chi^m\|_{L^2,k\phi}
\ge
C k^{-n} e^{k\varepsilon}.
\]
Hence
\[
\max(1,t)e^{-t}
=
O\!\left(
e^{-c e^{2k\varepsilon}}
\right),
\]
so that the total contribution of these terms is exponentially small,
uniformly in $x$.

If $0<\check g_{\overline D}(m/k)\le\varepsilon$,
their number is
\[
O\!\left(
\varepsilon k^n
\right),
\]
since it corresponds to lattice points in an $\varepsilon$-neighbourhood
of the hypersurface $\{\check g_{\overline D}=0\}$.
Thus their total contribution is $O(\varepsilon k^n)$.

Letting first $k\to\infty$ and then $\varepsilon\to0$,
only the principal term
\[
\sum_{\check g>0}
\frac{\|\chi^m(x)\|_{k\phi}^2}
{\|\chi^m\|_{L^2,k\phi}^2}
=
\rho(\mu,\overline S_{\phi,k})(x)
\]
remains.

\medskip
\noindent
\textsc{Step 2: Contribution of $\check g_{\overline D}\leq 0$.}

If $\check g_{\overline D}(m/k)<-\varepsilon$,
then
\[
\|\chi^m\|_{L^2,k\phi}
\ge
C k^{-n} e^{k\varepsilon},
\]
and Lemma~\ref{lem:theta-estimate} yields again
super-exponential decay in $k$.
Hence this part vanishes after division by $k^n$.

If $-\varepsilon\le\check g_{\overline D}(m/k)\leq 0$,
their number is $O(\varepsilon k^n)$,
so their total contribution to
\[
\int_X f \frac{\Theta(\mu,k\phi)}{k^n} {{\mu}}
\]
is bounded by
$\|f\|_\infty O(\varepsilon)$.

\medskip

Combining the above estimates,
we obtain
\[
\limsup_{k\to\infty}
\int_X f
\frac{\Theta(\mu,k\phi)}{k^n}{{\mu}}
=
\limsup_{k\to\infty}
\int_X f
\frac{\rho(\mu,\overline S_{\phi,k})}{k^n}{{\mu}} .
\]

Since this holds for all $f\in \mathscr{C}^0(\mathcal X(\C))$,
the weak convergence of measures follows.
\end{proof}

The following two results relate the $\Theta$-measure to the arithmetic volume 
and to the equilibrium measure in the toric setting. They complete the passage 
from discrete lattice sums to continuous measures on $\mathcal X(\C)$.

\begin{corollary}\label{limmeasure}
With the same notations as above, one has
\[
\limsup_{k\to\infty}
\int_X \frac{\Theta(\mu,k\phi)}{k^n}\, {{\mu}}
=
d\mathrm{vol}(\overline{\mathcal L}_\phi).
\]
\end{corollary}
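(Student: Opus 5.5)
The plan is to integrate the weak identity of Theorem~\ref{thm:13_2} against the constant test function $f\equiv 1$ and then recognise the resulting limit as a volume derivative. In the proof of Theorem~\ref{thm:13_2}, the identity
\[
\limsup_{k\to\infty}\int_X f\,\frac{\Theta(\mu,k\phi)}{k^n}\,\mu=\limsup_{k\to\infty}\int_X f\,\frac{\rho(\mu,\overline S_{\phi,k})}{k^n}\,\mu
\]
was established for every $f\in\mathscr C^0(\mathcal X(\mathbb C))$. Taking $f\equiv1$ and using \eqref{y1}, which gives $\int_X\rho(\mu,\overline E)\,\mu=\dim_{\mathbb C}E_{\mathbb C}$ for any sublattice, the right-hand side becomes
\[
\limsup_{k\to\infty}\frac{\operatorname{rank} S_{\phi,k}}{k^n}.
\]
In the toric case $S_{\phi,k}$ is freely generated by the characters $\chi^m$ with $m\in k\Delta_D\cap P$ and $\check g_\phi(m/k)>0$. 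This holds because $\|\chi^m\|_{\sup,k\phi}=\exp(-k\check g_\phi(m/k))$, and it was already used in the proof of Theorem~\ref{thm:13_2}. The quantity therefore reduces to counting lattice points.

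Next I would count these points by Riemann sums. Since $\check g_\phi$ is concave and continuous on $\Delta_D$, the set $\{\check g_\phi>0\}$ is convex. Its boundary inside $\Delta_D$ has Lebesgue measure zero, unless $\check g_\phi\equiv0$ on an open set, which cannot happen for concave functions except in degenerate cases. I would handle that degenerate case, $\max\check g_\phi=0$, separately: then both sides vanish, because the rank is zero and $\widehat{\mu}_{\max}\le 0$. Outside that case, Ehrhart-type counting gives
\[
\frac{\#\{m\in k\Delta_D\cap P:\check g_\phi(m/k)>0\}}{k^n}\longrightarrow \mathrm{vol}\{x\in\Delta_D:\check g_\phi(x)>0\}.
\]
In particular the $\limsup$ is a genuine limit.

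Finally I would identify this volume with $d\mathrm{vol}(\overline{\mathcal L}_\phi)$. By the Burgos--Moriwaki--Philippon--Sombra formula,
\[
\widehat{\mathrm{vol}}(\overline{\mathcal L}_{\phi+t})=(n+1)!\int_{\Delta_D}\max(0,\check g_\phi+t)\,dx,
\]
using that shifting the weight by $t$ shifts $\check g$ by $t$ with the sign convention fixed above. Differentiating at $t=0$, which dominated convergence allows, gives
\[
(n+1)!\,\mathrm{vol}\{\check g_\phi>0\},
\]
again using that $\{\check g_\phi=0\}$ is null. This matches $d\mathrm{vol}$ up to the normalisation constant in the Yuan--Zhang formula quoted in the introduction. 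Alternatively, I could invoke \cite[Proposition~3.4]{YuanZhang2} directly, as in the discussion preceding the definition of $\widehat{\mu}^{\sup}_{\mathrm{eq}}$: it identifies $\limsup_k \operatorname{rank}(\text{small-section span})/(k^n/n!)$ with $d\mathrm{vol}$.

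The main obstacle is bookkeeping rather than substance. The normalising factor in the statement, $k^n$ versus $k^n/n!$, and the sign and normalisation conventions of $d\mathrm{vol}$ must be made to match. The measure-zero property of the level set $\{\check g_\phi=0\}$ must also be justified, since it is needed both for the Riemann-sum limit and for differentiability. I would first try to settle that property via concavity together with the non-degeneracy assumption.
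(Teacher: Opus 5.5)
Your first half matches the paper's proof. You integrate the identity from Theorem~\ref{thm:13_2} against $f\equiv 1$ and use \eqref{y1} to rewrite the right-hand side as $\limsup_k \operatorname{rank} S_{\phi,k}/k^n$.

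The paper then only asserts that this equals $d\mathrm{vol}(\overline{\LL}_\phi)$ ``by definition of $\overline S_{\phi,k}$ and the asymptotic growth of sections''. In effect it relies on \cite[Proposition~3.4]{YuanZhang2}, quoted after \eqref{y1}, which is your fallback option.

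Your main route for this last step is different. You count the characters with $\check g_\phi(m/k)>0$, and you differentiate the Burgos--Moriwaki--Philippon--Sombra formula $\widehat{\mathrm{vol}}(\overline{\LL}_{\phi+t})=(n+1)!\int_{\Delta_D}\max(0,\check g_\phi+t)\,dx$. The Yuan--Zhang route needs no toric structure but is a black box. Yours is self-contained, shows that the $\limsup$ is a limit, and makes the constants visible.

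Those constants do not match the statement. Your argument gives a left-hand side equal to $\mathrm{vol}\{\check g_\phi>0\}$. But $d\mathrm{vol}$, normalised so that it equals $\deg\LL_{\Q}=n!\,\mathrm{vol}(\Delta_D)$ for nef bundles as the introduction requires, equals $n!\,\mathrm{vol}\{\check g_\phi>0\}$. So your bookkeeping worry points to a genuine defect in the statement, not in your argument: the denominator should be $k^n/n!$, as in the display after \eqref{y1}. The paper's one-line proof passes over this.

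There are two corrections to details.

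First, your handling of non-degeneracy is off.
\begin{itemize}
\item The Riemann-sum step needs no non-degeneracy at all, since the boundary of a bounded convex set is always Lebesgue-null. Non-degeneracy matters only for the two-sided derivative at $t=0$.
\item Your separate treatment of the case $\max\check g_\phi=0$ is wrong as stated. The bound $\widehat{\mu}_{\max}\le 0$ only kills the left derivative; the right derivative is $(n+1)!\,\mathrm{vol}\{\check g_\phi=0\}$. For example, the canonical metric has $S_{\phi,k}=0$, yet its right derivative is $(n+1)!\,\mathrm{vol}(\Delta_D)>0$.
\item Smoothness is what rescues you. For $x\in\mathrm{Int}(\Delta_D)$, the infimum defining $\check g_\phi(x)$ is attained at some $u_x$ with $\nabla g_\phi(u_x)=x$, because $g_\phi-\Psi_D$ is bounded. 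This $u_x$ is a supergradient of $\check g_\phi$ at $x$.
\item If $\check g_\phi$ were constant on an open set $V$, then $u_x=0$ and hence $x=\nabla g_\phi(0)$ for every $x\in V$, which is absurd. So $\{\check g_\phi=0\}$ is always null here, and the degenerate case never occurs.
\end{itemize}

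Second, the fact that $S_{\phi,k}$ is spanned by characters does not follow from $\|\chi^m\|_{\sup,k\phi}=e^{-k\check g_\phi(m/k)}$ alone. It follows from averaging over $\mathbb{S}_Q$, which gives $|a_m|\,\|\chi^m\|_{\sup,k\phi}\le\|\sum_{m'}a_{m'}\chi^{m'}\|_{\sup,k\phi}$ for every $m$.
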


\begin{proof}
By Theorem~\ref{thm:13_2}, we have the weak equality of measures
\[
\limsup_{k\to\infty}
\frac{\Theta(\mu,k\phi)}{k^n}\, {{\mu}}
=
\limsup_{k\to\infty}
\frac{\rho(\mu,\overline S_{\phi,k})}{k^n}\, {{\mu}} .
\]
Integrating against the constant function $1$ gives
\[
\limsup_{k\to\infty}
\int_X
\frac{\Theta(\mu,k\phi)}{k^n}\, {{\mu}}
=
\limsup_{k\to\infty}
\frac{1}{k^n}
\int_X \rho(\mu,\overline S_{\phi,k})\, {{\mu}} .
\]
The right-hand side equals the arithmetic volume
$d\mathrm{vol}(\overline{\mathcal L}_\phi)$ by definition of $\overline S_{\phi,k}$ 
and the asymptotic growth of sections.
\end{proof}

\begin{theorem}\label{toricThetaNef}
Let $\mu$ be a smooth invariant volume form and let 
$\overline{\mathcal L}_\phi$ be a smooth toric nef Hermitian line bundle 
on $\mathcal X$. Then, as measures on $\mathcal X(\C)$,
\[
\lim_{k\to\infty}
\frac{1}{k^n}\Theta(\mu,k\phi)\, {{\mu}}
=
\mu_\phi,
\]
where $\mu_\phi$ denotes the equilibrium (Monge–Ampère) measure of $\phi$.
In particular,
\[
d\mathrm{vol}(\overline{\mathcal L}_\phi)
=
\int_{\mathcal X(\C)} \mu_\phi.
\]
\end{theorem}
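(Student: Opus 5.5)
Using the diagonal structure of Proposition~\ref{1331}, I reduce $\Theta(\mu,k\phi)$ to a sum over lattice points of $k\Delta_D$ and compare it with Berman's convergence $\frac1{k^n}\rho(\mu,k\phi)\mu\to\mu_\phi$. The mechanism is that, for a nef toric Hermitian line bundle, the Legendre transform $\check g_\phi$ is nonnegative on $\Delta_D$. Indeed, arithmetic nefness in the toric case (Burgos--Philippon--Sombra) is equivalent to the metric being semipositive together with $\check g_\phi\ge 0$ on $\Delta_D$, since the restriction of $\check g_\phi$ to the vertices and faces computes heights of invariant curves and points. Because $\|\chi^m\|_{\sup,k\phi}=\exp(-k\check g_\phi(m/k))$, almost every character of $k\Delta_D$ is then a small section. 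I am replacing the smooth weight by its toric equilibrium weight $P_X\phi$ where needed. Note that $\mu_\phi=\mathrm{MA}(P_X\phi)$ and that the sup-norms are unchanged by this replacement.

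\textbf{Step 1.} I write, via the modular identity used in the proof of Theorem~\ref{thm:13_2},
\[
\Theta(\mu,k\phi)(x)=\rho(\mu,k\phi)(x)-\sum_{m\in k\Delta_D\cap P}\frac{\|\chi^m(x)\|^2_{k\phi}}{\|\chi^m\|^2_{L^2,k\phi}}\,\epsilon\bigl(\|\chi^m\|^2_{L^2,k\phi}\bigr),
\]
with $\epsilon(t)=2\theta'(1/t)/(t\theta(1/t))\ge0$. Lemma~\ref{lem:theta-estimate} gives $\epsilon(t)\le C\max(1,1/t)e^{-1/t}$, so each weight lies in $[0,1]$. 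Moreover the weight is superexponentially small as soon as $\|\chi^m\|_{L^2,k\phi}\le e^{-k\delta}$. By Gromov's inequality this holds when $\check g_\phi(m/k)\ge 2\delta$ and $k\gg1$.

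\textbf{Step 2.} I split the lattice points into $A_k=\{m:\check g_\phi(m/k)\ge2\delta\}$ and its complement $B_k$. On $A_k$ the remainder is $o(1)$ uniformly in $x$. On $B_k$ I use the trivial bound by the corresponding part of the Bergman kernel and integrate against $\mu$, which gives total mass $\#B_k$. Since $\check g_\phi\ge0$ is concave, the set $\{0\le\check g_\phi<2\delta\}$ shrinks, as $\delta\to0$, to $\{\check g_\phi=0\}$. The key claim is that this set has Lebesgue measure zero in $\Delta_D$. Granting it, $\#B_k=o_\delta(1)k^n$, and so
\[
\Bigl|\tfrac1{k^n}\textstyle\int f\Theta(\mu,k\phi)\mu-\tfrac1{k^n}\int f\rho(\mu,k\phi)\mu\Bigr|\le \|f\|_\infty\bigl(o(1)+\mathrm{vol}\{\check g_\phi<2\delta\}n!\bigr).
\]
Letting $k\to\infty$, then $\delta\to0$, and applying Berman's theorem \eqref{B1} yields $\frac1{k^n}\Theta(\mu,k\phi)\mu\to\mu_\phi$. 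The formula for $d\mathrm{vol}$ then follows by integrating against $1$ and invoking Corollary~\ref{limmeasure}.

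\textbf{Main obstacle.} The difficulty is the claim that $\{\check g_\phi=0\}$ is negligible. It can fail: for instance, when $\check g_\phi\equiv0$ on a full-dimensional region, the limit is genuinely $\mathds 1_U\mu$ as in Theorem~\ref{thm:73_1}, and not $\mu_\phi$. I would therefore handle the zero set separately instead of discarding it. On the interior of $\{\check g_\phi=0\}$ the norms $\|\chi^m\|_{L^2,k\phi}$ are only subexponential, and the weights $\epsilon$ need not vanish. My first attempt is to show that for a smooth nef metric any such region forces $\check g_\phi$ to be affine there. I would then use a perturbation $\phi\mapsto\phi-t$ with $t\downarrow0$, together with the monotonicity of $\Theta$ in the weight from Lemma~\ref{difference}, to sandwich the limit. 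Failing that, I would restrict the statement to the case where $\check g_\phi>0$ on $\mathrm{Int}(\Delta_D)$, which is what nefness together with the strict positivity of the smooth metric should give.
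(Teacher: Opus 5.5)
Your strategy is essentially the paper's. Nefness gives $\check g_\phi\ge 0$ on $\Delta_D$ (the paper adapts \cite[Theorem~6.1]{Burgos3}), so every $\chi^m$ with $m\in k\Delta_D\cap P$ satisfies $\|\chi^m\|_{\sup,k\phi}=e^{-k\check g_\phi(m/k)}\le 1$. One then shows that the per-character weights $-2t\theta'(t)/\theta(t)$ of Proposition~\ref{1331} are asymptotically $1$, and applies Berman's theorem. The paper routes this through Theorem~\ref{thm:13_2} and the lattice $S^{\circ}_{\phi,k}$, which is then the full lattice; your direct comparison of $\Theta(\mu,k\phi)$ with $\rho(\mu,k\phi)$ is cleaner.

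As written, however, the proposal has a genuine gap exactly where you place the main obstacle. The bound $\#B_k=o_\delta(1)k^n$ rests on the claim that $\{\check g_\phi=0\}$ is Lebesgue-null in $\Delta_D$, and you leave this unproved. Your suggested failure mode is also mistaken. Theorem~\ref{thm:theta-toric-1} concerns positive metrics for which $\check g_\phi$ takes negative values: its set $U$ is everything when $\check g_\phi\ge 0$. A full-dimensional zero set cannot occur for a smooth nef metric. The fallbacks you list are not needed: Lemma~\ref{difference} gives no monotonicity of $\Theta$ in the weight, and weakening the hypotheses would prove a different statement.

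The missing fact has a short proof. The condition $\check g_\phi\ge 0$ on $\Delta_D$ is equivalent to $g_\phi(u)\le\min_{y\in\Delta_D}\langle y,u\rangle$ for all $u\in Q_\R$. Suppose $\check g_\phi(x_0)=0$ for some $x_0\in\mathrm{Int}(\Delta_D)$, and choose $r>0$ with $B(x_0,r)\subset\Delta_D$. Then
\[ h(u):=\langle x_0,u\rangle-g_\phi(u)\ge\max_{y\in\Delta_D}\langle x_0-y,u\rangle\ge r|u|, \]
while $\inf h=\check g_\phi(x_0)=0$. Hence every minimizing sequence tends to $0$, and $h(0)=0$. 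Since $g_\phi$ is smooth on $Q_\R$, the point $0$ is a global minimum of a differentiable function, so $\nabla h(0)=0$ and $h(u)=o(|u|)$. This contradicts $h(u)\ge r|u|$.

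Thus $\check g_\phi>0$ on $\mathrm{Int}(\Delta_D)$, and $\{\check g_\phi=0\}\cap\Delta_D\subset\partial\Delta_D$ is null (trivially so if $\Delta_D$ is not full-dimensional). Since $\{\check g_\phi\ge 2\delta\}$ is convex, lattice-point counting gives $\#B_k/k^n\to\mathrm{vol}\{x\in\Delta_D:\check g_\phi(x)<2\delta\}$. This volume tends to $0$ as $\delta\to 0$, and your argument closes.

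The paper relies on the same fact implicitly: the proof of Theorem~\ref{thm:13_2} asserts, without justification, that the layer $\{|\check g(m/k)|\le\varepsilon\}$ contains $O(\varepsilon k^n)$ lattice points.

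Minor points:
\begin{enumerate}
\item With your definition, $\epsilon(t)\le 0$, since $\theta'<0$; insert a minus sign.
\item On $A_k$ no Gromov inequality is needed, because $\|\chi^m\|_{L^2,k\phi}\le\|\chi^m\|_{\sup,k\phi}$ for normalized $\mu$.
\item The factor $n!$ in your error term should be dropped.
\end{enumerate}
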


\begin{proof}
By Theorem~\ref{thm:13_2}, the measures
\[
\frac{1}{k^n}\Theta(\mu,k\phi)\,\mu
\]
have the same weak asymptotics as
\[
\frac{1}{k^n}\rho(\mu,\overline S_{\phi,k})\,\mu .
\]

A careful reading of the proof of Theorem~\ref{thm:13_2} shows that the same conclusion holds when $S_{\phi,k}$ is replaced by the sublattice 
$S_{\phi,k}^{\circ}$ generated by small sections.

For toric nef Hermitian line bundles, we adapt 
\cite[Theorem~6.1 (2)]{Burgos3}, dropping the positivity assumption to fit our setting, in order to show that the Legendre--Fenchel transform associated with $\overline{\mathcal L}_\phi$ is nonnegative on $\Delta$. 
It follows that $S_{\phi,k}^{\circ}$ coincides with the full space of global sections. Hence,
\[
\frac{1}{k^n}\rho(\mu,\overline S_{\phi,k}^\circ)\,\mu
=
\frac{1}{k^n}\rho(\mu,k\phi)\,\mu
\longrightarrow
\mu_\phi
\quad \text{weakly as } k\to\infty .
\]

Therefore the same weak convergence holds for
\[
\frac{1}{k^n}\Theta(\mu,k\phi)\,\mu .
\]
The identity for the  volume derivative then follows from
Corollary~\ref{limmeasure}.
\end{proof}

  The following theorem and proof establish the asymptotic distribution of the arithmetic distortion measure on toric varieties. The result shows that the measure concentrates on a specific subset $U$ of the variety, defined by the non-negativity of the Legendre-Fenchel transform.

 \begin{theorem}\label{thm:theta-toric-1}
Let \( \overline{\mathcal L}_\phi \) be a smooth toric Hermitian line bundle on \( \mathcal X \) such that 
\( c_1(\overline{\mathcal L}_\phi) \) is positive, and set
\[
\mu = c_1(\overline{\mathcal L}_\phi)^n.
\]
Then, as \( k\to\infty \), one has weak convergence of measures
\[
\frac{1}{k^n}\Theta(\mu,k\phi)\,\mu
\longrightarrow
\mathds 1_U \,\mu,
\]
where \( U \) is the closure of
\[
\left\{
z\in \mathcal X^\circ(\C)
\;\middle|\;
\sum_{j=1}^n
z_j\frac{\partial\phi}{\partial z_j}(z)\log|z_j|
-\phi(z)\ge0
\right\}.
\]
\end{theorem}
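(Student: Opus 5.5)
The plan is to reduce to Theorem~\ref{thm:13_2} and then evaluate the Bergman measure of the toric sublattice $\overline S_{\phi,k}$ explicitly through the moment map. By Theorem~\ref{thm:13_2}, and in fact by its proof with $\limsup$ replaced by $\lim$, we have for every $f\in\mathscr C^0(\mathcal X(\C))$
\[
\int_X f\,\frac{\Theta(\mu,k\phi)}{k^n}\,\mu-\int_X f\,\frac{\rho(\mu,\overline S_{\phi,k})}{k^n}\,\mu\longrightarrow 0 .
\]
The discarded terms there were exponentially small or of size $O(\varepsilon)$ after normalization, independently of any subsequence. It therefore suffices to show that $k^{-n}\rho(\mu,\overline S_{\phi,k})\,\mu\to\mathds 1_U\,\mu$ weakly.

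\textbf{Expressing $\rho(\mu,\overline S_{\phi,k})$.} The basis of $S_{\phi,k}$ is $\{\chi^m: m\in k\Delta_D\cap P,\ \check g_\phi(m/k)>0\}$, using $\|\chi^m\|_{\sup,k\phi}=e^{-k\check g_\phi(m/k)}$. Hence
\[
\rho(\mu,\overline S_{\phi,k})(x)=\sum_{\check g_\phi(m/k)>0}\frac{\|\chi^m(x)\|^2_{k\phi}}{\|\chi^m\|^2_{L^2,k\phi}} .
\]
With $\mu=c_1(\overline{\LL}_\phi)^n$ positive and smooth, the classical toric Bergman kernel analysis (Laplace method on $Q_\R$, as in Berman's proof specialized to characters) gives a precise description of each term. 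The function $x=\exp(-u-i\theta)\mapsto \|\chi^m(x)\|^2_{k\phi}/\|\chi^m\|^2_{L^2,k\phi}$ is, after multiplication by $\mu$ and normalization, a probability measure on $X$. As $k\to\infty$ with $m/k\to p\in\mathrm{Int}\Delta_D$, this measure concentrates, with Gaussian width $k^{-1/2}$, on the real torus orbit $\{\nabla g_\phi(u)=p\}=\{u=G_\phi(p)\}$. The total mass per point is $\approx (k^n/n!)^{-1}$ times the density relating $\mu$ to the Lebesgue measure on $\Delta_D$; with this normalization, the pushforward of $\mu$ under the moment map is $n!\cdot$Lebesgue on $\Delta_D$. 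Summing a Riemann sum over $m/k$ in $\{\check g_\phi>0\}$ then gives, for $f$ continuous,
\[
\frac1{k^n}\int_X f\rho(\mu,\overline S_{\phi,k})\mu\to \int_{\{x:\ \check g_\phi(\nabla g_\phi(u(x)))\ge0\}} f\,\mu .
\]
This uses that the hypersurface $\{\check g_\phi=0\}$ has Lebesgue measure zero, as the boundary of a convex set, $\check g_\phi$ being concave. This also makes the choice between $>0$ and $\ge0$ irrelevant, and makes the boundary of $U$ and the toric boundary $\mu$-null.

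\textbf{Identifying $U$.} On the open orbit, with $p=\nabla g_\phi(u)$, Legendre duality gives $\check g_\phi(p)=\langle p,u\rangle-g_\phi(u)$. Rewriting $u=-\log|z|$ and $g_\phi$ in terms of $\phi$ (up to the paper's sign conventions for weights) turns $\langle\nabla g_\phi(u),u\rangle-g_\phi(u)$ into $\sum_j z_j\partial_{z_j}\phi\log|z_j|-\phi$. So the region is exactly the set defining $U$, and passing to the closure changes nothing measure-theoretically.

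\textbf{Main obstacle.} The hard step is the uniform concentration estimate for the single-character Bergman measures near the boundary of $\Delta_D$, where $G_\phi$ blows up. I would handle it by truncating: characters with $m/k$ within $\delta$ of $\partial\Delta_D$ contribute $O(\delta)$ total mass by \eqref{intRho}-type counting. In the interior, the Laplace estimates are uniform on compacts, so one lets $k\to\infty$ and then $\delta\to0$.
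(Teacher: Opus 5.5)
Your proof is correct, but it evaluates the main term differently from the paper.

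\textbf{The paper's route.} The paper never computes the limit of $k^{-n}\rho(\mu,\overline S_{\phi,k})\mu$ directly. It splits the open orbit into $X_1^\circ=\{\check g(\nabla g(u))<0\}$ and $X_2^\circ=\{\check g(\nabla g(u))\ge 0\}$ and works on the variety side. Two facts do the work: the exponential decay of $\|\chi^m(z)\|_{k\phi}/\|\chi^m\|_{\sup,k\phi}$ away from the torus orbit over $m/k$ (strict concavity of $g$), and the super-exponential suppression by the theta weights when $\check g(m/k)\le-\varepsilon$. Together they show, up to an $O(\varepsilon)$ layer, that $k^{-n}\Theta(\mu,k\phi)$ is negligible over $X_1^\circ$ and agrees with the full Bergman kernel $k^{-n}\rho(\mu,k\phi)$ over $X_2^\circ$. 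The paper then concludes by quoting the Bergman asymptotics $k^{-n}\rho(\mu,k\phi)\mu\to\mu$.

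\textbf{Your route.} You first reduce to $\rho(\mu,\overline S_{\phi,k})$ via Theorem~\ref{thm:13_2}. Upgrading it to a limit is legitimate: for each $k$ the error there is bounded by super-exponentially small terms plus a lattice-point count in $\{|\check g_\phi(m/k)|<\varepsilon\}$. You then work on the polytope side. Each $\nu_m=\|\chi^m\|^2_{k\phi}\|\chi^m\|^{-2}_{L^2,k\phi}\,\mu$ is a probability measure concentrating on the orbit $u=G_\phi(m/k)$, and a Riemann sum over $\{\check g_\phi>0\}$ yields the moment-map pullback of Lebesgue measure.

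\textbf{What each buys.} The paper's route is shorter once the Bergman asymptotics are taken as a black box. Yours is self-contained, and because every character carries mass one, every error term becomes a pure lattice-point count, both near $\{\check g_\phi=0\}$ and near $\partial\Delta_D$. This matters because the paper's layer bound (``$O(\varepsilon)$ uniformly in $z$'') really holds only after integrating against $\mu$, not pointwise; your formulation builds that in. The cost is a uniform Laplace estimate on compacts of $\mathrm{Int}\,\Delta_D$, i.e.\ essentially re-deriving the toric Bergman asymptotics.

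\textbf{Two minor bookkeeping points.}
\begin{enumerate}
\item The Lebesgue-nullity of $\{\check g_\phi=0\}$ needs strict concavity of $\check g_\phi$ on $\mathrm{Int}\,\Delta_D$, which positivity of $c_1(\overline{\mathcal L}_\phi)$ provides; concavity alone is not enough, since for the canonical metric $\check g\equiv 0$ on $\Delta_D$.
\item Since the $\nu_m$ are probability measures, $k^{-n}\sum_{\check g_\phi(m/k)>0}\nu_m$ has mass tending to $\mathrm{vol}(\{\check g_\phi>0\})$. So your limit equals $\mathds 1_U\,\mu$ only when the moment-map pushforward of $\mu$ is Lebesgue measure, which is the normalization implicit in the paper's use of $k^{-n}\rho(\mu,k\phi)\mu\to\mu$. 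With pushforward equal to $n!$ times Lebesgue, as you wrote, you would get $\frac{1}{n!}\mathds 1_U\,\mu$.
\end{enumerate}
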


\begin{proof}

Let $g$ be the Green function attached to $\overline{\mathcal L}_\phi$ and 
$\check g$ its Legendre transform.
For $z\in\mathcal X^\circ$, set
\[
x=\nabla g(-\log|z|).
\]
Then the condition defining $U$ is equivalent to $\check g(x)\ge0$.

We decompose
\[
\mathcal X^\circ(\C)
=
X_1^\circ\sqcup X_2^\circ,
\]
where
\[
X_1^\circ=\{\check g(x)<0\},
\qquad
X_2^\circ=\{\check g(x)\ge0\}.
\]

\medskip
\noindent
\textsc{Step 1: Exponential decay away from the boundary.}

Fix $\varepsilon>0$.

 If $z\in X_1^\circ$, then $\check g(x)<0$.
By strict concavity of $g$, there exists $c_\varepsilon>0$ such that
\[
\check g(y)-\langle y,G(x)\rangle+g(G(x))
\le -c_\varepsilon
\]
for all $y$ with $\check g(y)\ge\varepsilon$.

Using
\[
\|\chi^m\|_{\sup,k\phi}
=
e^{-k\check g(m/k)}
\]
and Gromov's inequality, we obtain
\[
\sum_{\check g(m/k)\ge\varepsilon}
\frac{\|\chi^m(z)\|_{k\phi}^2}
{\|\chi^m\|_{L^2,k\phi}^2}
=
O\!\left(k^{3n}e^{-k c_\varepsilon}\right).
\]
Hence on $X_1^\circ$,
\[
\frac{1}{k^n}\Theta(\mu,k\phi)(z)
\longrightarrow 0.
\]

Similarly, if $z\in X_2^\circ$ and $\check g(m/k)\le -\varepsilon$,
one obtains the same exponential decay.

\medskip
\noindent
\textsc{Step 2: Boundary layer estimate.}

The number of lattice points satisfying
\[
|\check g(m/k)|\le\varepsilon
\]
is
\[
O(\varepsilon k^n),
\]
since this corresponds to lattice points in an
$\varepsilon$–neighbourhood of the hypersurface
$\{\check g=0\}$.

Thus their total contribution to
\[
\frac{1}{k^n}\Theta(\mu,k\phi)
\]
is $O(\varepsilon)$ uniformly in $z$.

\medskip
\noindent
\textsc{Step 3: Main term on $X_2^\circ$.}

On $X_2^\circ$, the dominant contribution comes from
\[
\sum_{\check g(m/k)\ge0}
\frac{\|\chi^m(z)\|_{k\phi}^2}
{\|\chi^m\|_{L^2,k\phi}^2}
=
\rho(\mu,k\phi)(z)
+O(\varepsilon k^n)
+O(k^{3n}e^{-k c_\varepsilon}).
\]

Since
\[
\frac{1}{k^n}\rho(\mu,k\phi)\,\mu
\longrightarrow
\mu
\]
(Bergman kernel asymptotics in the positive toric case),
we deduce that on $X_2^\circ$
\[
\frac{1}{k^n}\Theta(\mu,k\phi)\,\mu
\longrightarrow
\mu.
\]

\medskip

Combining the three steps,
for every continuous function $f$ we obtain
\[
\int_{X}
f\,\frac{1}{k^n}\Theta(\mu,k\phi)\,\mu
=
\int_{X_2^\circ} f\,\mu
+
O(\varepsilon)
+
O(e^{-k c_\varepsilon}).
\]

Letting first $k\to\infty$ and then $\varepsilon\to0$
gives
\[
\int_{X}
f\,\frac{1}{k^n}\Theta(\mu,k\phi)\,\mu
\longrightarrow
\int_{\mathcal X}
f\,\mathds 1_U\,\mu,
\]
which proves the weak convergence.
\end{proof}


\section{Arithmetic volume and its variational properties}\label{sec6}

In this section, we provide a representation of the arithmetic volume of Hermitian line bundles  over  smooth projective arithmetic varieties. This representation utilizes the arithmetic distortion function $\Theta$ and highlights the relationship between the distribution of small sections and the global arithmetic invariants. 

\medskip

Let $\mathcal{X}$ be a smooth projective arithmetic variety over $\mathbb{Z}$, not necessarily toric.

\begin{theorem}\label{Representation}
Let $\overline{\mathcal{L}}_\phi$ be a smooth Hermitian line bundle on $\mathcal{X}$. We have:
\[
\widehat{\mathrm{vol}}(\overline{\mathcal{L}}_\phi)
=
(n+1)!
\limsup_{k\to\infty}
\int_0^{\widehat{\mu}_{\max}(\overline{\mathcal{L}}_\phi)}
\int_{X(\C)}
\frac{1}{k^n}
\Theta(\mu, k(\phi-t))(x)\,\mu\, dt.
\]

\end{theorem}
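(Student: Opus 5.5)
Write $a:=\widehat{\mu}_{\max}(\overline{\LL}_\phi)$ and $V_k:=H^0(\X,k\LL)$. The plan is to integrate the variation formula of Lemma~\ref{difference} along the path of constant shifts $\phi-t$, for $t\in[0,T]$ with $T>a$. Then we check that the endpoint at $T$ contributes nothing asymptotically, and that the part of the $t$-integral with $t\in[a,T]$ is also negligible. The final step is the identification $\widehat{\mathrm{vol}}=\widehat{\mathrm{vol}}_{L^2,\theta}$ of Theorem~\ref{MorMor}.

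\textbf{Step 1: integrated formula.} Apply Lemma~\ref{difference} to $\phi$ and $\psi=\phi-T$. Then $\phi-\psi=T$, and $\phi_s=\phi-(1-s)T$. After the change of variables $t=(1-s)T$ we get, for every $k$,
\[
h_\theta^0\bigl(\overline{V_k}_{(L^2,k\phi)}\bigr)-h_\theta^0\bigl(\overline{V_k}_{(L^2,k(\phi-T))}\bigr)
=k\int_0^T\int_{X}\Theta(\mu,k(\phi-t))\,\mu\,dt .
\]

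\textbf{Step 2: the endpoint is negligible.} For $T>a$ I claim $h_\theta^0\bigl(\overline{V_k}_{(L^2,k(\phi-T))}\bigr)\to 0$. Indeed, $\lambda_1(\mu,k(\phi-T))=e^{kT}\lambda_1(\mu,k\phi)\ge e^{k(T-a-\delta)}$ for $k\gg1$, by the definition of $a$ and the Bernstein--Markov property. The theta sum is then $1+\sum_{v\ne0}e^{-\pi\|v\|^2}$. I would bound the tail by splitting into shells of radius $r\,\lambda_1$ and counting lattice points of norm $\le R$ crudely by $(1+2R/\lambda_1)^{N_k}$. Alternatively, one can use Bost's monotonicity $\theta(t)\le(\alpha/t)^{N/2}\theta(\alpha)$ as in the proof of Proposition~\ref{ineq1}. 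Either way the tail is $O(e^{-c\,e^{2k(T-a-\delta)}})$, because $N_k=O(k^n)$ is only polynomial. Hence the $h_\theta^0$ term is $o(1)$.

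\textbf{Step 3: the part $t\in[a,T]$ is negligible.} For each fixed $t>a$, Proposition~\ref{vanishingTheta} gives $k^{-n}\Theta(\mu,k(\phi-t))\to0$. To integrate in $t$ we need domination, which is supplied by $\Theta\le\rho$ and \eqref{intRho}:
\[
\int_X k^{-n}\Theta(\mu,k(\phi-t))\,\mu\le N_k/k^n=O(1)
\]
uniformly in $t$. The only subtlety is near $t=a$. Given $\delta>0$, the interval $[a,a+\delta]$ contributes at most $O(\delta)$ by the uniform bound. On $[a+\delta,T]$ the estimate of Step~2 is uniform in $t$ (it only improves as $t$ grows), so the integral tends to $0$. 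Since $\Theta\ge0$, if $a<0$ the formula should be read with the integral over $[0,a]$ counted as zero. In that case the same argument with $T$ slightly positive shows that both sides vanish.

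\textbf{Step 4: conclusion.} Dividing Step~1 by $k^{n+1}/(n+1)!$, taking $\limsup$, and using Steps~2 and~3, the left side becomes $\widehat{\mathrm{vol}}_{L^2,\theta}(\overline{\LL}_\phi)$. By Theorem~\ref{MorMor} this equals $\widehat{\mathrm{vol}}(\overline{\LL}_\phi)$. The error from Step~3 is $O(\delta)+o(1)$, and letting $\delta\to0$ gives the identity with upper limit $a$.

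I expect the main obstacle to be the uniformity in $t$ in Steps~2 and~3. Proposition~\ref{vanishingTheta} is pointwise in $t$, and its proof is somewhat loose. So I would re-derive directly the uniform bound
\[
\int_X\Theta(\mu,k(\phi-t))\,\mu\le \frac{2\pi\sum_{v\neq0}\|v\|^2e^{-\pi\|v\|^2}}{\sum_{v}e^{-\pi\|v\|^2}},
\]
with norms taken for $k(\phi-t)$, and control it through \eqref{Bineq} with $r=\lambda_1$. That bound holds as soon as $\lambda_1^2\ge (N_k+2)/(2\pi)$, which is guaranteed for all $t\ge a+\delta$ once $k\gg1$.
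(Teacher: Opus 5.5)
Your proposal is correct and follows the paper's route: integrate the variation formula along the constant shifts $\phi-t$, discard the endpoint term for $T>\widehat{\mu}_{\max}(\overline{\LL}_\phi)$, show that the part of the $\Theta$-integral with $t>\widehat{\mu}_{\max}(\overline{\LL}_\phi)$ vanishes asymptotically, and conclude with Theorem~\ref{MorMor}. The differences are only in execution: the paper handles the endpoint by citing $\widehat{\mathrm{vol}}(\overline{\LL}_{\phi-s})=0$ and the tail by dominated convergence with Proposition~\ref{vanishingTheta}, whereas you prove $h^0_\theta\to 0$ directly from the growth of $\lambda_1$ and replace dominated convergence by a uniform estimate from \eqref{Bineq} plus an $O(\delta)$ strip near $\widehat{\mu}_{\max}(\overline{\LL}_\phi)$, which is slightly more self-contained.
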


\begin{proof}
Recall the definition of $h_\theta^0$. For every $u \geq 1$ and $k=1, 2, \ldots$, we consider the variation of the theta invariant under a shift of the metric:
\[
\begin{split}
h_\theta^0(\overline{H^0(\mathcal{X}, k\mathcal{L})}_{(L^2, k\phi)}) - &h_\theta^0(\overline{H^0(\mathcal{X}, k\mathcal{L})}_{(L^2, k\phi - \frac{1}{2} \log u)})\\
& = -\int_1^u \frac{ \frac{d}{dt}\theta_{\overline{H^0(\mathcal{X}, k\mathcal{L})}_{(L^2, k\phi - \frac{1}{2}\log t )}}(1)}{\theta_{\overline{H^0(\mathcal{X}, k\mathcal{L})}_{(L^2, k\phi - \frac{1}{2}\log t )} } (1)} \, dt \\
=& \frac{1}{2} \int_1^u \int_X \Theta(\mu, k\phi)(t; x) \, {{\mu}} \, dt \\
=& \frac{1}{2} \int_1^u \int_X \frac{1}{t} \Theta(\mu, k\phi - \frac{1}{2}\log t) \, {{\mu}} \, dt.
\end{split}
\]
By performing the change of variables $t = s^k$, we obtain:
\[
\frac{1}{2} k \int_1^{u^{\frac{1}{k}}} \int_X \frac{1}{s} \Theta(\mu, k(\phi - \frac{1}{2}\log s)) \, {{\mu}} \, ds.
\]
Further letting $t = \log s$, we have:
\[
\frac{1}{2} k \int_0^{\frac{1}{k} \log u} \int_X \Theta(\mu, k(\phi - \frac{1}{2}t)) \, {{\mu}} \, dt.
\]
Setting $s := \frac{1}{2k} \log u$, we arrive at the identity:
\[
h_\theta^0(\overline{H^0(\mathcal{X}, k\mathcal{L})}_{(L^2, k\phi)}) - h_\theta^0(\overline{H^0(\mathcal{X}, k\mathcal{L})}_{(L^2, k(\phi - s))}) = k \int_0^{s} \int_X \Theta(\mu, k(\phi-t)) \, {{\mu}} \, dt.
\]

Recall that $\widehat{\mu}_{\max}(\overline{\mathcal{L}}_\phi)$ is finite. For any $s > \widehat{\mu}_{\max}(\overline{\mathcal{L}}_\phi)$, it is straightforward that the arithmetic volume satisfies:
\[
\widehat{\mathrm{vol}}(\overline{\mathcal{L}}_{\phi-s}) = 0.
\]
Thus, the term corresponding to the shifted metric does not contribute to the asymptotic growth of the arithmetic volume. We deduce:
\[
\limsup_k \frac{ h_\theta^0(\overline{H^0(\mathcal{X}, k\mathcal{L})}_{(L^2, k\phi)})}{k^{n+1}/(n+1)! } = (n+1)! \limsup_{k\rightarrow \infty} \frac{1}{k^n} \int_0^{s} \int_X \Theta(\mu, k(\phi-t)) \, {{\mu}} \, dt,
\]
for all $s > \widehat{\mu}_{\max}(\overline{\mathcal{L}}_\phi)$. 

For any $\nu > \widehat{\mu}_{\max}(\overline{\mathcal{L}}_\phi)$, the integrand is dominated by the Bergman kernel $\rho(\mu, k\phi)$. Using the Dominated Convergence Theorem and the fact that $\lim_k \frac{1}{k^n}\Theta(\mu, k(\phi-t)) = 0$ for $t > \widehat{\mu}_{\max}$ (see Proposition \ref{vanishingTheta}), we obtain the representation:
\begin{equation}\label{weakrep1}
\limsup_k \frac{ h_\theta^0(\overline{H^0(\mathcal{X}, k\mathcal{L})}_{(L^2, k\phi)})}{k^{n+1}/(n+1)! } = (n+1)! \limsup_{k\rightarrow \infty} \frac{1}{k^n} \int_0^{\widehat{\mu}_{\max}(\overline{\mathcal{L}}_\phi)} \int_X \Theta(\mu, k(\phi-t)) \, {{\mu}} \, dt.
\end{equation}
The proof is concluded by applying Theorem \ref{MorMor}, which relates $h_\theta^0$ to the arithmetic volume.
\end{proof}

The next result shows that arithmetic bigness can be characterized 
by a uniform lower bound on the arithmetic equilibrium measure.

\begin{theorem}\label{thm:big}
Let $\overline{\mathcal{L}}_\phi$ be a continuous Hermitian line bundle on $\mathcal{X}$. 
Then $\overline{\mathcal{L}}_\phi$ is big if and only if there exist constants 
$\varepsilon>0$ and $C>0$ such that, for every $t \in \mathbb{R}$ with $|t|<\varepsilon$, one has
\[
\widehat{\mu}^{\inf}_{\mathrm{eq}}\bigl((\mathcal{X},\mu), \overline{\mathcal{L}}_{\phi+t}\bigr)
\;\ge\;
C\,\mu
\]
as measures on $\mathcal{X}(\mathbb{C})$.
Moreover, the constant $C$ may be chosen independently of $t$.
\end{theorem}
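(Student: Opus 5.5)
Both directions will go through the integral representation of Theorem~\ref{Representation} and the variation formula of Lemma~\ref{difference}. Combined with Theorem~\ref{MorMor}, these give for every $s$ and $k$
\[
h_\theta^0\bigl(\overline{H^0(\mathcal X,k\mathcal L)}_{(L^2,k\phi)}\bigr)-h_\theta^0\bigl(\overline{H^0(\mathcal X,k\mathcal L)}_{(L^2,k(\phi-s))}\bigr)=k\int_0^s\int_X\Theta(\mu,k(\phi-t))\,\mu\,dt .
\]
The plan is to use this identity in one direction to produce volume from a lower bound on $\Theta$, and in the other direction to produce a lower bound on $\Theta$ from volume.

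\emph{Sufficiency.} Assume $\widehat{\mu}^{\inf}_{\mathrm{eq}}((\mathcal X,\mu),\overline{\mathcal L}_{\phi+t})\ge C\mu$ for $|t|<\varepsilon$.

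1. Integrating the pointwise liminf gives $\liminf_k k^{-n}\int_X\Theta(\mu,k(\phi-t))\,\mu\ge C$ for $t\in[0,\varepsilon/2]$.
2. By Fatou's lemma in $t$, the right-hand side of the identity with $s=\varepsilon/2$ is at least $(C\varepsilon/2+o(1))k^{n+1}$.
3. The subtracted term $h^0_\theta$ of the shifted lattice is nonnegative, since it is the logarithm of a theta series containing $v=0$.
4. Hence $\widehat{\mathrm{vol}}(\overline{\mathcal L}_\phi)=\widehat{\mathrm{vol}}_{L^2,\theta}(\overline{\mathcal L}_\phi)\ge (n+1)!\,C\varepsilon/2>0$.
5. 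Bigness of $\mathcal L_{\mathbb Q}$ follows because $\Theta\le\rho$ and \eqref{intRho} give $N_k\ge \int_X\Theta\,\mu\ge Ck^n$, so $\mathrm{vol}(\mathcal L_{\mathbb Q})>0$.

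\emph{Necessity.} Assume $\overline{\mathcal L}_\phi$ is big.

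1. Since the arithmetic volume is continuous in $t$ by \eqref{continuitytrivial}, $\overline{\mathcal L}_{\phi+t}$ remains big for $|t|<\varepsilon$ when $\varepsilon$ is small.
2. I would then aim for a uniform pointwise lower bound using the envelope Theorem~\ref{thm:enveloppe}: $\widehat{\mu}^{\inf}_{\mathrm{eq}}(\overline{\mathcal L}_{\phi+t})\ge\widehat{\omega}^{\inf}_{\mathrm{eq}}(\overline S_{\phi+t,\bullet})$. This reduces the problem to showing that $\liminf_k k^{-n}\rho(\mu,\overline S_{\phi-\varepsilon,k})(x)\ge C$ for every $x$, since $S_{\phi-\varepsilon,k}\subset S_{\phi+t,k}$.
3. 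Bigness gives a strictly small section of some $m\mathcal L$ for the metric $\phi-\varepsilon'$. Following Yuan's and Moriwaki's arguments, I would multiply it by sections of an auxiliary ample Hermitian bundle to embed a large lattice into $S_{\phi-\varepsilon,k}$. The lower bound $\rho(\mu,\overline E)(x)\ge \|s(x)\|^2/\|s\|^2_{L^2}$ for any $s\in E$ then turns this into a pointwise estimate.
4. The constant $C$ is uniform in $t$ because every $S_{\phi+t,k}$ contains $S_{\phi-\varepsilon,k}$.

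\textbf{Main obstacle.} The hardest step is the uniform pointwise lower bound in step 3 of the necessity direction. Bigness only provides sections, and these vanish on divisors, so near the base locus or the zero locus of the chosen section the Bergman function of $S_{\phi-\varepsilon,k}$ may be small. What I would try first is to vary the chosen small section: use the freedom in $m$ and the fact that $\mathcal L_{\mathbb Q}$ is big to find, for each $x$, a strictly small section not vanishing at $x$, and conclude by compactness of $\mathcal X(\mathbb C)$. If this fails on the augmented base locus, the statement has to be weakened to a lower bound $C\mu$ off a null set, which is harmless for the measure-theoretic formulation.
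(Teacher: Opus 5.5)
Your sufficiency direction is correct and uses the same mechanism as the paper: the identity of Lemma~\ref{difference} underlying Theorem~\ref{Representation}, Fatou's lemma, and Theorem~\ref{MorMor}. Working on the fixed interval $[0,\varepsilon/2]$ and discarding the shifted term via $h^0_\theta\ge 0$ is cleaner than the paper's integration up to $\widehat{\mu}_{\max}(\overline{\mathcal L}_\phi)$, and you also check that $\mathcal L_{\mathbb Q}$ is big. Note that this direction only uses the integrated consequence $\liminf_k k^{-n}\int_X\Theta(\mu,k(\phi+t))\,\mu\ge C$ of the hypothesis.

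The necessity direction has a gap that cannot be filled, because the pointwise bound $\widehat{\mu}^{\inf}_{\mathrm{eq}}\ge C\mu$ is false for general big bundles. It fails on open sets of positive measure, so your fallback ``off a null set'' does not help.

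\emph{A general reason.} Since $\Theta\le\rho$ and $\rho(\mu,k(\phi+t))=\rho(\mu,k\phi)$, Berman's theorem gives $\widehat{\mu}^{\sup}_{\mathrm{eq}}((\mathcal X,\mu),\overline{\mathcal L}_{\phi+t})\le\mu_\phi$ for every $t$ when $\phi$ is smooth. The measure $\mu_\phi$ is carried by the contact set $\{P_X\phi=\phi\}$, on which $dd^c\phi\ge 0$ almost everywhere. So it gives no mass to an open set $V$ on which $dd^c\phi$ has a negative eigenvalue. Adding a large constant to $\phi$ makes $\overline{\mathcal L}_\phi$ big and changes none of this.

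\emph{Positive curvature does not help.} On $\mathbb P^1_{\mathbb Z}$ take $\mathcal O(1)$ with the Fubini--Study metric multiplied by $e^{c}$, $0<c<\tfrac12\log 2$, and $\mu=\omega_{\mathrm{FS}}$. Then $\|\chi^m\|_{\sup,k\phi}=e^{-k\check g(m/k)}$ with $\check g(x)=-\tfrac12\bigl(x\log x+(1-x)\log(1-x)\bigr)-c$. This is positive near $x=\tfrac12$, so the bundle is big, and negative near $x=0,1$. In the formula of Proposition~\ref{1331}:
\begin{itemize}
\item the characters with $\check g(m/k)\le-\eta$ give doubly exponentially small terms;
\item every term is at most $\|\chi^m(z)\|^2/\|\chi^m\|^2_{L^2}=(k+1)\binom{k}{m}a^m(1-a)^{k-m}$, with $a=|z_1|^2/(|z_0|^2+|z_1|^2)$;
\item the remaining $m$ lie in $[\alpha k,(1-\alpha)k]$.
\end{itemize}
Hence $k^{-1}\Theta(\mu,k(\phi+t))\to 0$ uniformly on $\{a<\alpha/2\}\cup\{a>1-\alpha/2\}$ for all small $|t|$. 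This agrees with Theorem~\ref{thm:theta-toric-1}: the limit is $\mathds{1}_U\mu$ with $U$ a proper closed subset, even though $\mathcal L$ is ample and the curvature is positive.

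This is exactly the obstacle you isolated, and it is also where the paper's own argument breaks. From the injection $\sigma\mapsto\sigma\otimes s^{\otimes k}$ it asserts $\rho(\mu,\overline S_{\phi+t,kp})\ge\rho(\mu,k\overline{\mathcal A}_{\psi+t})$. The correct lower bound carries the factor $\bigl(\|s(x)\|/\|s\|_{\sup}\bigr)^{2k}$, which is exponentially small away from the maximum locus of $\|s\|$.

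Your proposed repair does not work either:
\begin{itemize}
\item A single small section with $s(x)\neq 0$ contributes only $\|s(x)\|^2/\|s\|^2_{L^2}$ to $\rho(\mu,\overline S_{k})(x)$, not something of order $k^n$. In the example, $\chi^m$ with $m\approx k/2$ is small and nowhere zero on the torus, yet negligible near $a=0$.
\item The bad set is dictated by the metric, not by the augmented base locus, which is empty here.
\end{itemize}

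What your approach does establish is the integrated statement. For big $\overline{\mathcal L}_\phi$ and small $\varepsilon$:
\begin{itemize}
\item A packing count gives $\mathrm{rank}\,S_{\phi-\varepsilon,k}\ge c\,k^n$ for $k\gg 1$. Indeed, all sections of $\sup$-norm $\le 1$ for $k(\phi-2\varepsilon)$ lie in $S_{\phi-\varepsilon,k}$, so $\widehat h^0\le \mathrm{rank}\,S_{\phi-\varepsilon,k}\cdot O(k)$.
\item Poisson summation along the cosets of the saturation of $S_{\phi-\varepsilon,k}$, whose successive minima are exponentially small, gives $\int_X\Theta(\mu,k(\phi+t))\,\mu\ge \mathrm{rank}\,S_{\phi-\varepsilon,k}-o(1)$ for $|t|<\varepsilon/2$.
\end{itemize}
Combined with your sufficiency argument, bigness is characterized by a uniform lower bound on the total masses $\liminf_k k^{-n}\int_X\Theta(\mu,k(\phi+t))\,\mu$, not by $\widehat{\mu}^{\inf}_{\mathrm{eq}}\ge C\mu$.
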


 \begin{proof}
Let us assume that $\overline{\mathcal{L}}_\phi$ is big. Then there exists a non-zero section $s \in H^0(\mathcal{X}, k\mathcal{L})$ with $\|s\|_{\sup, k\phi} < 1$ for some $k \gg 1$. Let $\overline{\mathcal{A}}_\psi$ be an ample Hermitian line bundle on $\mathcal{X}$.

For each $k \in \mathbb{N}$, define the following arithmetic invariants associated with the first minima:
\[
a_k := -\inf_{s \in H^0(\mathcal{X}, k\mathcal{L}-\mathcal{A})} \log \lambda_1\left(\overline{H^0(\mathcal{X}, k\mathcal{L}-\mathcal{A})}_{(\sup, k\phi-\psi)}\right),
\]
\[
b_k := -\inf_{s \in H^0(\mathcal{X}, k\mathcal{L})} \log \lambda_1\left(\overline{H^0(\mathcal{X}, k\mathcal{L})}_{(\sup, k\phi)}\right).
\]
It follows that the sequence $(a_k)$ is superadditive relative to $(b_k)$, i.e., $a_{k+l} \geq a_k + b_l$. By the theory of superadditive sequences (writing $k = ld + r$), we obtain:
\[
\liminf_{k\to \infty} \frac{a_k}{k} \geq \lim_{l\to \infty} \frac{b_l}{l}.
\]
This inequality ensures the existence of an integer $p > 1$ and a section $s \in H^0(\mathcal{X}, p\mathcal{L}-\mathcal{A})$ such that $0 < \|s\|_{\sup, p\phi-\psi} < 1$. By considering the injective map:
\[
H^0(\mathcal{X}, k\mathcal{A}) \longrightarrow H^0(\mathcal{X}, kp\mathcal{L}), \quad \sigma \mapsto \sigma \otimes s^{\otimes k},
\]
we infer that for any $k \in \mathbb{N}_{\geq 1}$ and $|t| \ll 1$:
\[
\rho((\mathcal{X},\mu), k\overline{S}_{(\phi+t),k} ) \geq \rho((\mathcal{X},\mu), kp\overline{\mathcal{A}}_{\psi+t} ).
\]
Taking the limit inferior and applying Theorems \ref{thm:enveloppe} and \ref{main}, we have:
\[
\widehat{\omega}^{\inf}_{\mathrm{eq}}\left((\mathcal{X},\mu), \overline{S}_{\phi+t,\bullet} \right) \geq p^n \widehat{\omega}^{\inf}_{\mathrm{eq}}\left((\mathcal{X},\mu), \overline{\mathcal{A}}_{\psi+t} \right) = p^n \mu_{\psi}.
\]
Thus, $\widehat{\mu}^{\inf}_{\mathrm{eq}}\left((\mathcal{X},\mu), \overline{\mathcal{L}}_{\phi+t} \right) \geq p^n \mu_{\psi}$, which is a positive measure.

\medskip

  From Theorem \ref{Representation}, we easily infer that
\[
\widehat{\mathrm{vol}}(\overline{\mathcal{L}}_\phi)
\geq 
(n+1)!
\liminf_{k\to\infty}
\int_0^{\widehat{\mu}_{\max}(\overline{\mathcal{L}}_\phi)}
\int_{X(\C)}
\hat{\mu}_{\mathrm{eq}}^{\inf}((\X,\mu), \overline{\LL}_{\phi-t})(x)\, dt.
\]
Now, suppose the measure condition holds. The previous inequality shows that
\[
\widehat{\mathrm{vol}}(\overline{\mathcal{L}}_\phi)>0
\]
This demonstrates that $\overline{\mathcal{L}}_\phi$ is big. 
The proof of the theorem is now complete.

\end{proof}


\medskip

Our next result establishes a differentiability property of the arithmetic volume, specifically showing that its directional derivative in the direction of a smooth weight $\eta$ is given by the integral of $\eta$ against the arithmetic equilibrium measure $\mu_{\phi}$.

\begin{theorem}\label{corr271}
Let $\overline{\mathcal{L}}_\phi$ be a weakly nef Hermitian line bundle on $\mathcal{X}$ and assume that $\mathcal{L}$ is ample. Let $\eta$ be a nonnegative smooth weight on $\mathcal{O}$. Then
\[
\lim_{t\to 0^+}
\frac{1}{t}
\left(
\widehat{\mathrm{vol}}\bigl(\overline{\mathcal{L}}_{\phi+t\eta}\bigr)
-
\widehat{\mathrm{vol}}\bigl(\overline{\mathcal{L}}_{\phi}\bigr)
\right)
=(n+1)!
\int_{\mathcal{X}(\mathbb{C})}
\eta \, \mu_{\phi}.
\]
\end{theorem}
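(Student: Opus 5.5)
The plan is to combine Lemma~\ref{difference} with Theorem~\ref{MorMor}, so that the volume difference becomes an integral of the arithmetic distortion function along a path of weights. Then Theorem~\ref{main} identifies the limiting density with $\mu_\phi$.

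Fix $t>0$ small and set $\phi_s=\phi+s\eta$ for $s\in[0,t]$. Lemma~\ref{difference}, applied to the weights $\phi+t\eta$ and $\phi$, gives for every $k$
\[
h_\theta^0\bigl(\overline{H^0(\mathcal X,k\mathcal L)}_{(L^2,k\phi)}\bigr)-h_\theta^0\bigl(\overline{H^0(\mathcal X,k\mathcal L)}_{(L^2,k(\phi+t\eta))}\bigr)
=-kt\int_X\eta\Bigl(\int_0^1\Theta(\mu,k\phi_{ut})\,du\Bigr)\mu .
\]
Here the sign convention is that adding a nonnegative $\eta$ to the weight shrinks the norms and so increases $h^0_\theta$. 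We divide by $k^{n+1}/(n+1)!$.

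The left side is a difference of two normalized theta invariants. For the nef bundle $\overline{\mathcal L}_\phi$ and the bundles $\overline{\mathcal L}_{\phi+t\eta}$ (which remain weakly nef, since $\eta\ge0$ only makes sections smaller), I would first argue that $\limsup$ is an actual limit. One route is Chen's theorem that the arithmetic volume is a limit, combined with Theorem~\ref{MorMor} and \eqref{ArTheta} to transfer this to $h_\theta^0$. The limit of the left side is then $\widehat{\mathrm{vol}}(\overline{\mathcal L}_{\phi+t\eta})-\widehat{\mathrm{vol}}(\overline{\mathcal L}_\phi)$.

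On the right side, the integrand is dominated uniformly. We have $0\le\Theta\le\rho(\mu,k\phi_{ut})$, and $\frac1{k^n}\rho(\mu,k\phi_{ut})\le C$ uniformly in $u$ and $k$, by Bergman kernel bounds for smooth weights with uniformly bounded second derivatives. For each fixed $u$, the bundle $\overline{\mathcal L}_{\phi_{ut}}$ is weakly nef with $\mathcal L$ ample, so Theorem~\ref{main} gives
\[
\tfrac1{k^n}\Theta(\mu,k\phi_{ut})\mu\to\mu_{\phi_{ut}}
\]
weakly. Dominated convergence in $u$ then yields
\[
\widehat{\mathrm{vol}}(\overline{\mathcal L}_{\phi+t\eta})-\widehat{\mathrm{vol}}(\overline{\mathcal L}_\phi)=(n+1)!\,t\int_0^1\int_X\eta\,\mu_{\phi+ut\eta}\,du .
\]
This is the identity already advertised in the introduction.

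Finally we divide by $t$ and let $t\to0^+$. Since $\phi+ut\eta\to\phi$ uniformly, the equilibrium weights $P_X(\phi+ut\eta)$ converge uniformly to $P_X\phi$, because $P_X$ is $1$-Lipschitz in sup norm. Continuity of the Monge--Amp\`ere operator under uniform convergence of bounded psh weights (Bedford--Taylor) gives $\mu_{\phi+ut\eta}\to\mu_\phi$ weakly, uniformly in $u\in[0,1]$. Pairing with the continuous function $\eta$ gives the limit $(n+1)!\int_X\eta\,\mu_\phi$.

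I expect the main obstacle to be the passage from $\limsup$ to a genuine limit on the left side. Without it one only gets inequalities between $\limsup$ and $\liminf$ of the differences, and the argument must use that the $\Theta$-asymptotics are an honest limit along the whole segment. If Chen's limit theorem is not usable here, the fallback is to express $h_\theta^0$ for $k\phi$ itself as an integral from a very negative shift, as in the proof of Theorem~\ref{Representation}. Convergence of that integrand for every intermediate weight, again via Theorem~\ref{main}, would then yield existence of the limit directly.
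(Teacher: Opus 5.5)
Your proposal is correct and follows essentially the paper's route: Lemma~\ref{difference} along the segment $\phi+s\eta$, Theorem~\ref{main} applied to the intermediate weakly nef bundles together with bounded convergence in $s$, Theorem~\ref{MorMor} to pass to $\widehat{\mathrm{vol}}$, and weak continuity of $\mu_{\phi+s\eta}$ as $t\to0^+$. The paper adds an auxiliary shift $\phi+\varepsilon$ and then removes it via \eqref{continuitytrivial}; your observation that $\eta\ge0$ preserves weak nefness makes that shift unnecessary. Also, the obstacle you anticipated is not one: since the normalized right-hand side converges, the difference of the two $\limsup$'s of $h^0_\theta/(k^{n+1}/(n+1)!)$ automatically equals the limit of their difference, so you do not need Chen's theorem.
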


\begin{proof}
Let $\phi$ be a smooth weight such that $\overline{\mathcal{L}}_\phi$ is weakly nef. 
For $\varepsilon > 0$, the Hermitian line bundle 
$\overline{\mathcal{L}}_{\phi+\varepsilon}$ is strictly positive. 
Applying Theorem~\ref{main} and Lemma~\ref{difference}, we obtain
\[
\widehat{\mathrm{vol}}\bigl(\overline{\mathcal{L}}_{\phi+\varepsilon+\eta}\bigr)
-
\widehat{\mathrm{vol}}\bigl(\overline{\mathcal{L}}_{\phi+\varepsilon}\bigr)
=(n+1)!
\int_{\mathcal{X}(\mathbb{C})}
\eta
\int_0^1
\mu_{\phi+\varepsilon+s\eta}
\, ds .
\]

By the continuity of the arithmetic volume
(see \eqref{continuitytrivial})
and the weak continuity of the equilibrium measure with respect to the weight
(see \cite[Proposition~4.12]{BermanBoucksom} or \cite[Theorem~2.17]{BEGZ}),
we may let $\varepsilon \to 0$ and obtain
\[
\widehat{\mathrm{vol}}\bigl(\overline{\mathcal{L}}_{\phi+\eta}\bigr)
-
\widehat{\mathrm{vol}}\bigl(\overline{\mathcal{L}}_{\phi}\bigr)
=(n+1)!
\int_{\mathcal{X}(\mathbb{C})}
\eta
\int_0^1
\mu_{\phi+s\eta}
\, ds .
\]

Replacing $\eta$ by $t\eta$ with $t>0$, we deduce
\[
\widehat{\mathrm{vol}}\bigl(\overline{\mathcal{L}}_{\phi+t\eta}\bigr)
-
\widehat{\mathrm{vol}}\bigl(\overline{\mathcal{L}}_{\phi}\bigr)
=(n+1)!
t
\int_{\mathcal{X}(\mathbb{C})}
\eta
\int_0^1
\mu_{\phi+st\eta}
\, ds .
\]

Since $\mu_{\phi+st\eta}$ converges weakly to $\mu_{\phi}$ as $t \to 0$,
and $\eta$ is smooth, we conclude
\[
\lim_{t\to 0^+}
\frac{1}{t}
\left(
\widehat{\mathrm{vol}}\bigl(\overline{\mathcal{L}}_{\phi+t\eta}\bigr)
-
\widehat{\mathrm{vol}}\bigl(\overline{\mathcal{L}}_{\phi}\bigr)
\right)
=(n+1)!
\int_{\mathcal{X}(\mathbb{C})}
\eta \, \mu_{\phi}.
\]
\end{proof}

\section{A volume-based approach to equidistribution}

This section illustrates the machinery developed in this work. 
Traditionally, equidistribution theory is based on higher-dimensional 
height theory \cite{BoGS} and was later generalized in \cite{Zhang}. 
The functoriality of heights plays a crucial role in the proof of 
equidistribution theorems, although it is sometimes left implicit. 
Our treatment is volume-based and avoids the formalism of 
higher-dimensional height theory.

Let $(\mathcal{X}, f, \mathcal{L})$ be an algebraic dynamical system 
defined over $\mathbb{Z}$. Thus $f: \mathcal{X} \to \mathcal{X}$ is a morphism 
and there exists an isomorphism
\[
\alpha: f^*\mathcal{L} \xrightarrow{\sim} \mathcal{L}^{\otimes d},
\]
with $d>1$. We assume that $\mathcal{X}$ is smooth of dimension $n+1$ 
and that $\mathcal{L}$ is ample. 

There exists a unique continuous semipositive metric 
$\|\cdot\|_{\phi_\infty}$ such that
\[
\alpha: f^\ast \overline{\mathcal{L}}_{\phi_\infty} 
\simeq \overline{\mathcal{L}}_{\phi_\infty}^{\otimes d}
\]
is an isometry, see \cite{Zhang}.

Let $(\phi_p)_{p \in \mathbb{N}}$ be a decreasing sequence of smooth 
positive weights on $\mathcal{L}$ such that 
$\overline{\mathcal{L}}_{\phi_p}$ is ample for every $p$ and 
$\phi_p \to \phi_\infty$ uniformly on $\mathcal{X}(\mathbb{C})$. 
Let $\eta$ be a smooth nonnegative function on $\mathcal{X}(\mathbb{C})$. 
Then the Hermitian line bundles 
$\overline{\mathcal{L}}_{\phi_p+\eta}$ are ample.

For every $p \in \mathbb{N}$, we have
\[
\widehat{\mathrm{vol}}(\overline{\mathcal{L}}_{\phi_p +\eta}) 
- \widehat{\mathrm{vol}}(\overline{\mathcal{L}}_{\phi_p}) 
=(n+1)!
\int_{\mathcal{X}(\mathbb{C})} 
\eta \left( \int_0^1 \mu_{\phi_p+s\eta} \, ds \right).
\]
By continuity of the arithmetic volume and 
\cite[Proposition 4.12]{BermanBoucksom} or 
\cite[Theorem 2.17]{BEGZ}, we obtain
\[
\widehat{\mathrm{vol}}(\overline{\mathcal{L}}_{\phi_\infty +\eta}) 
- \widehat{\mathrm{vol}}(\overline{\mathcal{L}}_{\phi_\infty}) 
=(n+1)!
\int_{\mathcal{X}(\mathbb{C})} 
\eta \left( \int_0^1 \mu_{\phi_\infty+s\eta} \, ds \right).
\]

\medskip

The essential minimum of $\overline{\mathcal{L}}_{\phi_\infty}$ is defined by
\[
e_1(\overline{\mathcal{L}}_{\phi_\infty}) 
:=
\sup_{\substack{U \subset \mathcal{X} \\ U \text{ open}}}
\inf_{x \in U(\overline{\mathbb{Q}})}
h_{\overline{\mathcal{L}}_{\phi_\infty}}(x).
\]
We have
\[
e_1(\overline{\mathcal{L}}_{\phi_\infty})
\geq 
\widehat{\mu}_{\max}(\overline{\mathcal{L}}_{\phi_\infty}).
\]
In the dynamical setting, one shows that 
$e_1(\overline{\mathcal{L}}_{\phi_\infty})=0$. 
Since 
$\widehat{\mu}_{\max}(\overline{\mathcal{L}}_{\phi_\infty}) \ge 0$, 
it follows that
\[
\widehat{\mu}_{\max}(\overline{\mathcal{L}}_{\phi_\infty}) = 0,
\quad
\text{hence}
\quad
\widehat{\mathrm{vol}}(\overline{\mathcal{L}}_{\phi_\infty}) = 0.
\]

\medskip

Let $(x_m)_{m \in \mathbb{N}}$ be a generic and small sequence of 
algebraic points of $\mathcal{X}$. 
Thus for every strict closed subscheme $\mathcal{Z} \subsetneq \mathcal{X}$, 
the set $\{m : x_m \in \mathcal{Z}\}$ is finite, and
\[
h_{\overline{\mathcal{L}}_{\phi_\infty}}(x_m)
\longrightarrow
 0.
\]

Let $\eta$ be a smooth nonnegative function on 
$\mathcal{X}(\mathbb{C})$. We have
\[
\liminf_{m}
h_{\overline{\mathcal{L}}_{\phi_\infty+\eta}}(x_m)
\ge 
e_1(\overline{\mathcal{L}}_{\phi_\infty+\eta}) \ge \widehat{\mu}_{\max}(\overline{\mathcal{L}}_{\phi_\infty+\eta}) 
\ge
\frac{\widehat{\mathrm{vol}}
(\overline{\mathcal{L}}_{\phi_\infty+\eta})}
{(n+1)
\,\mathrm{vol}(\mathcal{L}_{\mathbb{Q}})}.
\]

Using the definition of heights of points, we obtain
\[
\lim_{m}
h_{\overline{\mathcal{L}}_{\phi_\infty}}(x_m)
+
\liminf_{m}
\int_{\mathcal{X}(\mathbb{C})} 
\eta \, \delta_{x_m}
\ge
\frac{
\widehat{\mathrm{vol}}(\overline{\mathcal{L}}_{\phi_\infty})
+(n+1)!
\int_{\mathcal{X}(\mathbb{C})}
\eta \left( \int_0^1 
\mu_{\phi_\infty+s\eta} \, ds \right)
}
{(n+1)
\,\mathrm{vol}(\mathcal{L}_{\mathbb{Q}})},
\]
where $\delta_{x_m}$ denotes the probability measure associated to 
the Galois orbit of $x_m$.

Since 
$\widehat{\mathrm{vol}}(\overline{\mathcal{L}}_{\phi_\infty})=0$ 
and 
$h_{\overline{\mathcal{L}}_{\phi_\infty}}(x_m)\to 0$, 
we deduce
\[
\liminf_{m}
\int_{\mathcal{X}(\mathbb{C})} 
\eta \, \mu_{x_m}
\ge
\frac{n!
\int_{\mathcal{X}(\mathbb{C})}
\eta \left( \int_0^1 
\mu_{\phi_\infty+s\eta} \, ds \right)
}
{
\,\mathrm{vol}(\mathcal{L}_{\mathbb{Q}})}.
\]

Replacing $\eta$ by $\varepsilon \eta$ and letting 
$\varepsilon \to 0^+$, we obtain
\[
\liminf_{m}
\int_{\mathcal{X}(\mathbb{C})} 
\eta \, \mu_{x_m}
\ge
\frac{n!
\int_{\mathcal{X}(\mathbb{C})}
\eta \, \mu_{\phi_\infty}
}
{
\,\mathrm{vol}(\mathcal{L}_{\mathbb{Q}})}.
\]

Applying this to $M-g$, where $M=\sup g$, yields
\[
\lim_{m\to\infty}
\int_{\mathcal{X}(\mathbb{C})}
g \, \mu_{x_m}
=\frac{n!}{\mathrm{vol}(\LL_\Q)}
\int_{\mathcal{X}(\mathbb{C})}
g \, \mu_{\phi_\infty}
\qquad
\forall g \in \mathscr{C}^0(\mathcal{X}(\mathbb{C})).
\]

\bibliographystyle{plain} 

\bibliography{biblio}

\begin{thebibliography}{10}

\bibitem{Bana}
W.~Banaszczyk.
\newblock New bounds in some transference theorems in the geometry of numbers.
\newblock {\em Math. Ann.}, 296(4):625--635, 1993.

\bibitem{BermanBoucksom}
Robert Berman and S{\'e}bastien Boucksom.
\newblock Growth of balls of holomorphic sections and energy at equilibrium.
\newblock {\em Invent. Math.}, 181(2):337--394, 2010.

\bibitem{Berman2009}
Robert~J. Berman.
\newblock Bergman kernels and equilibrium measures for line bundles over
  projective manifolds.
\newblock {\em Amer. J. Math.}, 131(5):1485--1524, 2009.

\bibitem{BoGS}
J.-B. Bost, H.~Gillet, and C.~Soul{\'e}.
\newblock Heights of projective varieties and positive {G}reen forms.
\newblock {\em J. Amer. Math. Soc.}, 7(4):903--1027, 1994.

\bibitem{BostTheta}
Jean-Beno\^{\i}t Bost.
\newblock {\em Theta invariants of {E}uclidean lattices and
  infinite-dimensional {H}ermitian vector bundles over arithmetic curves},
  volume 334 of {\em Progress in Mathematics}.
\newblock Birkh\"{a}user/Springer, Cham, [2020] \copyright 2020.

\bibitem{Bouche}
Thierry Bouche.
\newblock Convergence de la m\'etrique de {F}ubini-{S}tudy d'un fibr\'e
  lin\'eaire positif.
\newblock {\em Ann. Inst. Fourier (Grenoble)}, 40(1):117--130, 1990.

\bibitem{Chen}
S{\'e}bastien Boucksom and Huayi Chen.
\newblock Okounkov bodies of filtered linear series.
\newblock {\em Compos. Math.}, 147(4):1205--1229, 2011.

\bibitem{BEGZ}
S{\'e}bastien Boucksom, Philippe Eyssidieux, Vincent Guedj, and Ahmed Zeriahi.
\newblock Monge-{A}mp\`ere equations in big cohomology classes.
\newblock {\em Acta Math.}, 205(2):199--262, 2010.

\bibitem{Burgos3}
Jos{\'e}~Ignacio Burgos~Gil, Atsushi Moriwaki, Patrice Philippon, and
  Mart{\'{\i}}n Sombra.
\newblock Arithmetic positivity on toric varieties.
\newblock {\em J. Algebraic Geom.}, 25(2):201--272, 2016.

\bibitem{Burgos_distribution}
Jos\'{e}~Ignacio Burgos~Gil, Patrice Philippon, Juan Rivera-Letelier, and
  Mart\'{\i}n Sombra.
\newblock The distribution of {G}alois orbits of points of small height in
  toric varieties.
\newblock {\em Amer. J. Math.}, 141(2):309--381, 2019.

\bibitem{Catlin}
David Catlin.
\newblock The {B}ergman kernel and a theorem of {T}ian.
\newblock In {\em Analysis and geometry in several complex variables ({K}atata,
  1997)}, Trends Math., pages 1--23. Birkh\"{a}user Boston, Boston, MA, 1999.

\bibitem{CL2006}
Antoine Chambert-Loir.
\newblock Mesures et \'equidistribution sur les espaces de {B}erkovich.
\newblock {\em J. Reine Angew. Math.}, 595:215--235, 2006.

\bibitem{Chen1}
Huayi Chen.
\newblock Arithmetic {F}ujita approximation.
\newblock {\em Ann. Sci. \'Ec. Norm. Sup\'er. (4)}, 43(4):555--578, 2010.

\bibitem{Chen-2011}
Huayi Chen.
\newblock Differentiability of the arithmetic volume function.
\newblock {\em J. Lond. Math. Soc. (2)}, 84(2):365--384, 2011.

\bibitem{DemaillyLivre}
J.P. Demailly.
\newblock {\em Complex analytic and differential geometry}.

\bibitem{finski2024wess_v2}
Siarhei Finski.
\newblock About wess--zumino--witten equation and harder--narasimhan
  potentials.
\newblock 2024.
\newblock Revised 13 Nov 2024.

\bibitem{AIT}
Henri Gillet and Christophe Soul{\'e}.
\newblock Arithmetic intersection theory.
\newblock {\em Inst. Hautes \'Etudes Sci. Publ. Math.}, 72:93--174 (1991),
  1990.

\bibitem{Character}
Henri Gillet and Christophe Soul{\'e}.
\newblock Characteristic classes for algebraic vector bundles with {H}ermitian
  metric. {I}.
\newblock {\em Ann. of Math. (2)}, 131(1):163--203, 1990.

\bibitem{Character2}
Henri Gillet and Christophe Soul{\'e}.
\newblock Characteristic classes for algebraic vector bundles with {H}ermitian
  metric. {II}.
\newblock {\em Ann. of Math. (2)}, 131(2):205--238, 1990.

\bibitem{Gubler2003}
Walter Gubler.
\newblock Local and canonical heights of subvarieties.
\newblock {\em Ann. Sc. Norm. Super. Pisa Cl. Sci. (5)}, 2(4):711--760, 2003.

\bibitem{HajliTAMS}
Mounir Hajli.
\newblock The theta invariants and the volume function on arithmetic varieties.
\newblock {\em Trans. Amer. Math. Soc.}, 376(3):2237--2256, 2023.

\bibitem{Hisamoto1}
Tomoyuki Hisamoto.
\newblock Restricted {B}ergman kernel asymptotics.
\newblock {\em Trans. Amer. Math. Soc.}, 364(7):3585--3607, 2012.

\bibitem{Hisamoto2}
Tomoyuki Hisamoto.
\newblock On the limit of spectral measures associated to a test configuration
  of a polarized {K}\"ahler manifold.
\newblock {\em J. Reine Angew. Math.}, 713:129--148, 2016.

\bibitem{Moriwaki1}
Atsushi Moriwaki.
\newblock Arithmetic height functions over finitely generated fields.
\newblock {\em Invent. Math.}, 140(1):101--142, 2000.

\bibitem{Moriwaki2}
Atsushi Moriwaki.
\newblock Continuity of volumes on arithmetic varieties.
\newblock {\em J. Algebraic Geom.}, 18(3):407--457, 2009.

\bibitem{Moriwaki}
Atsushi Moriwaki.
\newblock Big arithmetic divisors on the projective spaces over {$\Bbb Z$}.
\newblock {\em Kyoto J. Math.}, 51(3):503--534, 2011.

\bibitem{Oda}
Tadao Oda.
\newblock Convex bodies and algebraic geometry---toric varieties and
  applications. {I}.
\newblock In {\em Algebraic {G}eometry {S}eminar ({S}ingapore, 1987)}, pages
  89--94. World Sci. Publishing, Singapore, 1988.

\bibitem{convex}
R.~Tyrrell Rockafellar.
\newblock {\em Convex analysis}.
\newblock Princeton Landmarks in Mathematics. Princeton University Press,
  Princeton, NJ, 1997.
\newblock Reprint of the 1970 original, Princeton Paperbacks.

\bibitem{SUZ}
L.~Szpiro, E.~Ullmo, and S.~Zhang.
\newblock \'equir\'epartition des petits points.
\newblock {\em Invent. Math.}, 127(2):337--347, 1997.

\bibitem{Tian}
Gang Tian.
\newblock On a set of polarized {K}\"ahler metrics on algebraic manifolds.
\newblock {\em J. Differential Geom.}, 32(1):99--130, 1990.

\bibitem{Yuan}
Yuan Xinyi.
\newblock On volumes of arithmetic line bundles ii.
\newblock {\em arXiv:0909.3680v1}.

\bibitem{YuanInventiones}
Xinyi Yuan.
\newblock Big line bundles over arithmetic varieties.
\newblock {\em Invent. Math.}, 173(3):603--649, 2008.

\bibitem{Yuan_2009}
Xinyi Yuan.
\newblock On volumes of arithmetic line bundles.
\newblock {\em Compos. Math.}, 145(6):1447--1464, 2009.

\bibitem{YuanZhang1}
Xinyi Yuan and Tong Zhang.
\newblock Effective bound of linear series on arithmetic surfaces.
\newblock {\em Duke Math. J.}, 162(10):1723--1770, 2013.

\bibitem{YuanZhang2}
Xinyi Yuan and Tong Zhang.
\newblock Effective bounds of linear series on algebraic varieties and
  arithmetic varieties.
\newblock {\em J. Reine Angew. Math.}, 736:255--284, 2018.

\bibitem{Zelditch}
Steve Zelditch.
\newblock Szeg{o} kernels and a theorem of {T}ian.
\newblock {\em Internat. Math. Res. Notices}, (6):317--331, 1998.

\bibitem{Zha95}
Shouwu Zhang.
\newblock Positive line bundles on arithmetic varieties.
\newblock {\em J. Amer. Math. Soc.}, 8(1):187--221, 1995.

\bibitem{Zhang}
Shouwu Zhang.
\newblock Small points and adelic metrics.
\newblock {\em J. Algebraic Geom.}, 4(2):281--300, 1995.

\end{thebibliography}

\end{document}